\newtheorem{Theorem}{Theorem}[section]
\newtheorem{Lemma}[Theorem]{Lemma}
\newtheorem{Cor}[Theorem]{Corollary}
\newtheorem{Rem}[Theorem]{Remark}
\newtheorem{Def}[Theorem]{Definition}
\newtheorem{Prop}[Theorem]{Proposition}
\def\gl{\buildrel \rm def\over =}
\DeclareMathOperator{\Weib}{\mathrm{Weibull}}
\DeclareMathOperator{\perWeib}{\mathrm{-Weibull}}
\DeclareMathOperator{\Exp}{\mathrm{Exp}}
\DeclareMathOperator{\esssup}{ess \, sup}
\DeclareMathOperator{\G}{\mathbb{G}}
\DeclareMathOperator{\V}{\mathbb{V}}
\title{\bf A Min-Type Stochastic Fixed-Point Equation Related to the Smoothing Transformation}
\author{\sc Gerold Alsmeyer and Matthias Meiners\\
    \\
    Institut f\"ur Mathematische Statistik \\
    Fachbereich Mathematik \\
    Einsteinstra\ss e 62 \\
    D-48149 M\"unster, Germany}
\begin{document}
\maketitle

\begin{abstract}
\noindent
This paper is devoted to the study of the stochastic fixed-point equation
\begin{equation*}
X\ \stackrel{d}{=}\ \inf_{i \geq 1: T_i > 0} X_i/T_i
\end{equation*} 
and the connection with its additive counterpart $X\stackrel{d}{=}\sum_{i\ge 1}T_{i}X_{i}$ associated with the smoothing transformation. Here $\stackrel{d}{=}$ means equality in distribution, $T\gl (T_i)_{i \geq 1}$ is a given sequence of nonnegative random variables and $X, X_1, \ldots$ is a sequence of nonnegative i.i.d.\ random variables independent of $T$. We draw attention to the question of the existence of nontrivial solutions and, in particular, of special solutions named $\alpha$-regular solutions $(\alpha>0)$. We give a complete answer to the question of when $\alpha$-regular solutions exist and prove that they are always mixtures of Weibull distributions or certain periodic variants. We also give a complete characterization of all fixed points of this kind. A disintegration method which leads to the study of certain multiplicative martingales and a pathwise renewal equation
after a suitable transform are the key tools for our analysis. Finally, we provide corresponding results for the fixed points of the related additive equation mentioned above. To some extent, these results have been obtained earlier by Iksanov \cite{Ik}.
\vspace{0,1cm}

\noindent
\emph{Keywords:} Branching random walk; Elementary fixed points; Multiplicative martingales; Smoothing transformation; Stochastic fixed-point equation;

\noindent
2000 Mathematics Subject Classification:	60E05	\\
\hphantom{2000 Mathematics Subject Classification:	}39B22

\end{abstract}

\section{Introduction}
For a given sequence $T \gl (T_i)_{i \geq 1}$ of nonnegative random variables on a probability space $(\Omega,\mathfrak{A},\mathds{P})$ with $\sup_{i\ge 1}T_{i}>0$ a.s., consider the stochastic fixed-point equation (SFPE)
\begin{equation}\label{MinFP1}
X\ \stackrel{d}{=}\ \inf_{i \ge 1} X_i/T_i
\end{equation}
where $X,X_{1},X_{2},\ldots$ are i.i.d., nonnegative and independent of $T$ and
where $X_{i}/T_{i}\gl\infty$ is stipulated on $\{T_{i}=0\}$. A distribution $F$ on $[0,\infty)$ is called a solution to (\ref{MinFP1}) if this 
equation holds true with $X\stackrel{d}{=}F$, and it is called positive if $F(\{0\})=0$.  
Note that $F=\delta_{0}$, $\delta_{0}$ the Dirac measure at 0, always provides a trivial solution. The set of all solutions $\ne\delta_{0}$ will be denoted as $\mathfrak{F}_{\wedge}$ hereafter, or as $\mathfrak{F}_{\wedge}(T)$ if we want to emphasize its dependence on $T$.
We will make no notational distinction between a distribution $F$ and its left continuous distribution function,
and we denote by $\overline{F}$ the associated survival function, \textit{i.e.}, $\overline{F}\gl 1-F$. For
$F\in\mathfrak{F}_{\wedge}$, Eq.\ (\ref{MinFP1}) may be rewritten in terms of $\overline{F}$ as
\begin{equation}\label{MinFP2}
\overline{F}(t)\ =\ \mathds{E} \prod_{i \geq 1} \overline{F}(tT_i)
\end{equation}
for all $t \ge 0$.
Denote by $\mathcal{P},\overline{\mathcal{P}}$  the spaces of probability measures on $[0,\infty)$ and $[0,\infty]$, respectively. Defining the map $M: \mathcal{P}\rightarrow \mathcal{P}$ by
\begin{equation}\label{MapFP}
M(F)\  \gl\  \mathds{P} \left(\inf_{i \ge 1} \frac{X_i}{T_i} \in \, \cdot \right), \quad 
X\stackrel{d}{=} F,
\end{equation}
we see that, formally speaking, $\mathfrak{F}_{\wedge}$ is nothing but the set of fixed points $\ne
\delta_{0}$ of $M$, that is $\mathfrak{F}_{\wedge} = \{F \in \mathcal{P}: \, M(F) = F\} \setminus 
\{\delta_0\}$.

SFPEs of type \eqref{MinFP1} or similar with a min- or max-operation involved turn up in various fields of applied probability like probabilistic combinatorial optimization, the run-time analysis of divide-and-conquer algorithms or branching particle systems, where they typically characterize the asymptotic distribution of some random variable of interest (see \textit{e.g.} \cite{AldBan}, \cite{AldSt} and \cite{NR1}).  In particular, the probabilistic worst-case analysis of Hoare's \ttfamily FIND \rmfamily algorithm leads to the following fixed-point equation for the distributional limit $X$ of the linearly scaled maximal number of key comparisons of \ttfamily{FIND}.\rmfamily
\begin{equation*}	\label{eq:Exa1}
X	~\stackrel{d}{=}~ 1 + \max(UX_1,(1-U)X_2), 
\end{equation*}
where $U$ is a uniform $[0,1]$ random variable and $X_1, X_2$ are independent copies of the random variable $X$ (see \cite[Theorem 1]{Dev}). More general, in the analysis of divide-and-conquer algorithms, equations of the form
\begin{equation*}	\label{eq:Exa2}
X	~\stackrel{d}{=}~ \max_{i=1,\ldots,K} (A_i X_i + b_i)
\end{equation*}
appear (\textit{cf.}\ \cite{NR1} and \cite{R1}). Further, from a species competition model, the following fixed-point equation
\begin{equation*}	\label{eq:Exa3}
X	~\stackrel{d}{=}~ \eta + c \max_{i \geq 1} e^{-\xi_i} X_i,
\end{equation*}
arises, where $(\xi_i)_{i \geq 1}$ are the points of a Poisson process at rate $1$ and $\eta$ is an $\Exp(1)$ variable independent of the process $(\xi_i)_{i \geq 1}$ (see \cite[Example 38]{AldBan}).
By a theorem of R\"uschendorf (see \cite[Theorem 4.2]{R1}), under appropriate conditions on the random coefficients of the SFPEs above, there is a one-to-one relationship between the solutions of the max-type equation and the corresponding homogeneous equation. Therefore, it is convenient to study the homogeneous equation
\begin{equation}	\label{eq:Exa_hom}
X	~\stackrel{d}{=}~	\sup_{i \geq 1} A_i X_i,
\end{equation}
which is equivalent to Eq.\ \eqref{MinFP1} by an application of the involution $x \mapsto x^{-1}$. (Note that if in Eq.\ \eqref{eq:Exa_hom} $X$ has an atom in $0$, then this atom becomes an atom in $\infty$ in the equivalent equation \eqref{MinFP1}. This situation is not explicitly covered by the subsequent analysis but the results of this article remain true (after some minor changes) if an atom in $\infty$ is permitted.)

A first systematic approach to Eq.\ (\ref{MinFP1}) was given by Jagers and R\"osler \cite{JR} who pointed out
the following connection of \eqref{MinFP1} with its additive counterpart
\begin{equation} \label{SumFP1}
X\ \stackrel{d}{=}\ \sum_{i \geq 1} T_i X_i
\end{equation}
and the corresponding map $M_{\Sigma}:\mathcal{P}\rightarrow 
\overline{\mathcal{P}}$, defined by
\begin{equation}\label{MapST}
M_{\Sigma}(F)\  \gl\  \mathds{P} \left(\sum_{i \ge 1} T_{i}X_{i}\in \, \cdot \right), \quad X\stackrel{d}{=} F,
\end{equation}
which is usually called the \emph{smoothing transformation} due to Durrett and Liggett \cite{DL}.
Namely, rewriting \eqref{SumFP1} in terms of the Laplace transform $\varphi$, say, of $X$, we obtain
\begin{equation} \label{SumFP2}
\varphi(t)\ =\ \mathds{E} \prod_{i \geq 1} \varphi(t T_{i})
\end{equation}
for all $t \ge 0$, which is the direct analog of (\ref{MinFP2}). But since any Laplace transform vanishing at $\infty$ 
and thus pertaining to a distribution on $(0,\infty)$
can also be viewed as the survival function of a (continuous) probability distribution on $[0,\infty)$, one has the implication
\begin{equation}\label{FSigmainFinf}
\mathfrak{F}_{\Sigma} \not = \emptyset \quad\Longrightarrow\quad \mathfrak{F}_{\wedge} \not = \emptyset,
\end{equation}
where $\mathfrak{F}_{\Sigma}$ denotes the set of all positive solutions to \eqref{SumFP1}. Defining
$T^{(\alpha)}\gl (T_i^{\alpha})_{i\ge 1}$, one further has
\begin{equation} \label{FSigmaalphainFinf}
\mathfrak{F}_{\Sigma}(T^{(\alpha)}) \not = \emptyset \text{ for some } \alpha > 0 \quad\Longrightarrow\quad
\mathfrak{F}_{\wedge} \not = \emptyset,
\end{equation}
owing to the fact that $\mathfrak{F}_{\wedge} = \{\mathds{P}(X^{1/\alpha} \in \cdot): \, \mathds{P}(X\in\cdot) \in 
\mathfrak{F}_{\wedge}(T^{(\alpha)})\}$.
However, Jagers and R\"osler also give an example which shows that this implication cannot be reversed. We take up their example (the water cascades example) in Section \ref{cascade} in a generalized form.

Eq.\ \eqref{MinFP1} for the situation where the $T_{i},\,i\ge 1$, are deterministic but not necessarily nonnegative is discussed in detail by Alsmeyer and R\"osler \cite{AR2}. Their results concerning the case of nonnegative weights can be summarized as follows: Except for simple cases nontrivial fixed-points exist iff $T$ possesses a \emph{characteristic exponent}, defined as the unique positive number $\alpha > 0$ such that $\sum_{i \geq 1} T_i^{\alpha} = 1$. In this case the set of solutions $\mathfrak{F}_{\wedge}$ can be described as follows: For $\beta > 0$ and $r > 1$, let $\mathfrak{H}(r,\beta)$ be the set of left continuous, multiplicatively $r$-periodic functions $h:(0,\infty) \rightarrow (0,\infty)$ such that $t \mapsto h(t) t^{\beta}$ is nondecreasing. The distribution $F$ on $(0,\infty)$ with survival function
$$ \overline{F}(t) \ \gl\  e^{-h(t) t^{\beta}},   \quad t > 0 $$
is then called $r$-periodic Weibull distribution with parameters $h$ and $\beta$, in short $r$-$\Weib(h,\beta)$. Put $\mathfrak{W}(r,\beta) \gl \{r\perWeib(h,\beta): \, h \in \mathfrak{H}(r,\beta)\}$ for $\beta>0$ and $r>1$, and let $\mathfrak{W}(1,\beta) \gl \{\Weib(c,\beta): \, c > 0\}$ denote the set of ordinary Weibull distributions with parameter $\beta$, \textit{i.e.}, the set of distributions $F$ having a survival function $\overline{F}(t) = \exp(-ct^{\beta})$ ($t \ge 0$) for some positive constant $c$. Then $\mathfrak{F}_{\wedge} = \Weib(1,\alpha)$ or $\mathfrak{F}_{\wedge} = \mathfrak{W}(r,\alpha)$, respectively, depending on whether the closed multiplicative subgroup $\subseteq\mathds{R}^{+}= (0,\infty)$ generated by the positive $T_i$, which we denote by $\G(T)$, equals $\mathds{R}^{+}$ or $r^{\mathds{Z}}$ for some $r > 1$.
In view of this result, the result of Alsmeyer and R\"osler extends classical results on extreme value distributions and the problem adressed in this paper, the analysis of Eq.\ \eqref{MinFP1} is a further generalization of the analysis of extreme values, namely, of the distributional equation of homogeneous min-stability for in our situation the scaling factor is replaced by random coefficients.

One purpose of this paper is to investigate under which conditions similar results hold true in the situation of random weights $T_i$, $i \geq 1$, \textit{i.e.}, in which cases Weibull distributions or suitable mixtures of them are solutions to \eqref{MinFP1}. This calls for extended definitions
 of $\G(T)$ and of the characteristic exponent: We define $\G(T)$ as the minimal closed multiplicative subgroup $\G \subseteq\mathds{R}^{+}$ such that $\mathds{P}(T_i \in \G \cup \{0\} \text{ for all } i \in \mathds{N}) = 1$. We further define $m:[0,\infty) \mapsto [0,\infty]$ by
\begin{equation}\label{mgf}
m(\beta)\ \gl\  \mathds{E} \sum_{i \geq 1} T_i^{\beta}
\end{equation}
and then the characteristic exponent as the minimal positive $\alpha $ such that $m(\alpha) = 1$ if such an $\alpha$ exists. With these generalizations, we obtain a connection between certain Weibull mixtures and \emph{$\alpha$-regular solutions}, defined as solutions $F$ to \eqref{MinFP1} such that
the ratio $t^{-\alpha}(1-\overline{F}(t))$ stays bounded away from 0 and $\infty$ as $t$ approaches $0$. Indeed, Theorem \ref{alpha-elementar-Darstellung} will show that any $\alpha$-regular fixed point is a mixture of Weibull distributions with parameter $\alpha$ and particularly \emph{$\alpha$-elementary} which means that $t^{-\alpha}(1-\overline{F}(t))$
converges to a positive constant as $t\downarrow 0$ through a residue class
relative to $\G(T)$. Furthermore, Theorem \ref{alpha-elementar-Charakterisierung} will provide an exact characterization of when $\alpha$-elementary solutions to Eq.\ \eqref{MinFP1} exist.
Both, the existence of Weibull mixtures as fixed points and the existence of regular fixed points, are related to the existence of the characteristic exponent, which also plays a fundamental role in the analysis of Eq.\ \eqref{SumFP1}. The further organization of this article is as follows. Section \ref{Simple} provides a discussion of trivial and simple cases of \eqref{MinFP1}, which will be excluded thereafter. An introduction of the weighted branching model closely related to our SFPE \eqref{MinFP1} is given in Section \ref{WBP}, followed by the presentation
and discussion of the main results in Section \ref{MainResults}.
Section \ref{DisintReq} contains the derivation of a certain
pathwise renewal equation related to \eqref{MinFP1} via disintegration,
while Section \ref{CharakteristischerExponent} is devoted to a study of the characteristic exponent. It contains most of the necessary prerequisites
to prove Theorem \ref{alpha-elementar-Charakterisierung} and Theorem
\ref{alpha-elementar-Darstellung} which is done in Section \ref{FRE}. Here
the afore-mentionded pathwise renewal equation will form a key ingredient.
As already mentioned, Section \ref{cascade} provides a discussion of
Eq.\ \eqref{MinFP1} for a class of examples where the characteristic exponent does not generally exist. Finally, Section \ref{ConRem} contains some results for Eq.\ \eqref{SumFP1}, which are closely related to ours and can be derived by the same methods. Theorem \ref{alpha-elementar-Charakterisierung2} and Theorem \ref{alpha-elementar-Darstellung2} are extensions of Theorem 2 and Proposition 3 in \cite{Ik}.

\section{Basic results and simple cases} \label{Simple}

This section is devoted to a brief discussion of simple cases and a justification of the following two basic assumptions on $T$ (or, to be more precise, on the distribution of $T$): Put $N\gl\sum_{i\ge 1}
\mathbbm{1}_{\{T_{i}>0\}}$ and consider
\begin{equation} \tag{A1} \label{A1}
0\ <\ \mathds{P}(N>1)\ \leq\ \mathds{P}(N\ge 1)\ =\ 1;
\end{equation}
\begin{equation} \tag{A2} \label{A2}
\mathds{P}\bigg(\sup_{i \geq 1} T_i < 1\bigg) > 0.
\end{equation}
By our standing assumption, $\mathds{P}(N=0) = \mathds{P}(\sup_{i\ge 1}T_{i}=0)= 0$. Hence, if
\eqref{A1} fails to be true, then $N=1$ a.s.\ and Eq.\ \eqref{MinFP1} reduces to $X\stackrel{d}{=} T X$, where $T$ is independent of $X$ and a.s.\ positive. But this SFPE can easily be solved,
namely $\mathfrak{F}_{\wedge}\ne\emptyset$ iff $T=1$ a.s., see \textit{e.g.}\ Liu \cite{Liu}, Lemma 1.1. Validity  of condition \eqref{A1} will therefore always be assumed hereafter. As a consequence, the branching process with offspring distribution $\mathds{P}(N \in \, \cdot)$ (of simple Galton-Watson type if $N<\infty$ a.s.) survives with probability $1$, a fact that will be used later. 

The justification of assumption \eqref{A2} is slightly more involved and based upon the following two propositions: 

\begin{Prop} \label{beschrSatz1}
Suppose \eqref{A1}. Then 
$$\sup_{i \ge 1} T_i \geq 1\text{ a.s.}\quad\text{and}\quad\mathds{P}\left(\sup_{i \ge 1} T_i > 1
\right) >0 $$ 
implies $\mathfrak{F}_{\wedge} = \emptyset$.
\end{Prop}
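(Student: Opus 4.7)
The plan is a proof by contradiction: assume some $F\in\mathfrak{F}_{\wedge}$ exists and deduce that $\overline{F}$ must be multiplicatively periodic and hence identically zero, contradicting $F\neq\delta_{0}$.

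Set $T^{*}\gl\sup_{i\ge 1}T_{i}$, so that $T^{*}\ge 1$ a.s.\ by hypothesis, and work with the survival-function equation \eqref{MinFP2}. Since each factor of $\prod_{i\ge 1}\overline{F}(tT_{i})$ lies in $[0,1]$, the product is bounded above by $\inf_{i\ge 1}\overline{F}(tT_{i})$, which by the monotonicity and left-continuity of $\overline{F}$ equals $\overline{F}(tT^{*})$ (if the supremum is not attained, approximate from below by a subsequence $T_{i_{k}}\uparrow T^{*}$ and invoke left-continuity). This gives $\overline{F}(t)\le \mathds{E}\,\overline{F}(tT^{*})$ for every $t>0$. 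Conversely, $T^{*}\ge 1$ a.s.\ and the monotonicity of $\overline{F}$ yield $\overline{F}(tT^{*})\le \overline{F}(t)$ a.s., whence $\mathds{E}\,\overline{F}(tT^{*})\le \overline{F}(t)$. Both bounds force the pointwise identity $\overline{F}(tT^{*})=\overline{F}(t)$ a.s.\ for every fixed $t>0$.

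The remaining step is to upgrade this identity to a single realization of $T^{*}$ working for every $t$. Intersecting the null sets over the countable set $t\in\mathds{Q}\cap(0,\infty)$ produces an event of probability one on which $\overline{F}(tT^{*})=\overline{F}(t)$ for every positive rational $t$ simultaneously. Since $\mathds{P}(T^{*}>1)>0$, this event meets $\{T^{*}>1\}$, so one may pick a realization giving a deterministic value $r\gl T^{*}(\omega)>1$ (the case $r=\infty$ is immediate, as it forces $\overline{F}\equiv 0$) such that $\overline{F}(tr)=\overline{F}(t)$ holds for every positive rational $t$. Left-continuity of $\overline{F}$ extends this identity to every $t>0$ by approximation $t_{n}\uparrow t$ along rationals. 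Iterating yields $\overline{F}(tr^{n})=\overline{F}(t)$ for all $n\in\mathds{N}$, and since $tr^{n}\to\infty$ we conclude $\overline{F}(t)=\lim_{s\to\infty}\overline{F}(s)=0$ for every $t>0$, i.e.\ $F=\delta_{0}$, a contradiction. The sole delicate point is precisely this passage from a $t$-by-$t$ almost sure identity to a single deterministic multiplicative period, and it is settled by the separability of $\mathds{Q}$ together with the left-continuity of the survival function.
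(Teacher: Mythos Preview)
Your proof is correct. Both you and the paper begin by squeezing \eqref{MinFP2} between $\overline{F}(t)$ and $\mathds{E}\,\overline{F}(tT^{*})$ to obtain $\overline{F}(t)=\mathds{E}\,\overline{F}(tT^{*})$, but you then diverge: from the a.s.\ inequality $\overline{F}(tT^{*})\le\overline{F}(t)$ you deduce the \emph{pointwise} a.s.\ identity $\overline{F}(tT^{*})=\overline{F}(t)$, intersect null sets over positive rationals, and extract a single deterministic period $r>1$ via left-continuity, whence multiplicative periodicity forces $\overline{F}\equiv 0$. The paper instead iterates the averaged identity with i.i.d.\ copies of $T^{*}$ to obtain $\overline{F}(t)=\mathds{E}\,\overline{F}(t\Pi_{n})$ for the multiplicative random walk $\Pi_{n}$, and concludes via $\Pi_{n}\to\infty$ a.s.\ and dominated convergence. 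Your argument is more hands-on (null sets, separability, left-continuity) but avoids introducing the auxiliary random walk; the paper's version is shorter and perhaps cleaner once one accepts the random-walk divergence as standard. Both are equally valid.
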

\begin{proof}
Let $F$ be a solution to \eqref{MinFP1}. Then Eq.\ \eqref{MinFP2} gives
$$ \overline{F}(t)\ =\ \mathds{E} \prod_{i \geq 1} \overline{F}(t T_i)\ 
      \le\ \mathds{E} \overline{F}\left(t  \sup_{i \geq 1} T_i\right)\ 
          \le\ \overline{F}(t) $$
and thus $\overline{F}(t) = \mathds{E} \overline{F}(t  \sup_{i \ge 1} T_i)$ for all $t \ge 0$. 
Let $(Y_i)_{i \geq 1}$ be a sequence of i.i.d.\ copies of $\sup_{i \geq 1} T_i$ with associated multiplicative random walk $(\Pi_{n})_{n\ge 0}$, \textit{i.e.}, $\Pi_{0}\gl 1$ and $\Pi_{n}\gl Y_{1}\cdot\ldots\cdot
Y_{n}$ for $n\ge 1$. Then $\overline{F}(t) = \mathds{E} \overline{F}(t \Pi_n)$ for each $n$. Our assumptions on $\sup_{i \ge 1} T_i$ ensure $\Pi_n \uparrow \infty$ a.s., whence $\overline{F}(t) = 0$ for all $t > 0$, that is $F=\delta_{0}$.
\end{proof}

Before proceeding with our second proposition, let us note in passing that any $\sigma(T)$-measurable finite or infinite rearrangement $T_{\pi}\gl (T_{\pi(1)},T_{\pi(2)},\ldots)$ of $T$ leaves the set of solutions
to our SFPE \eqref{MinFP1} unchanged because the $X_{i}$ are i.i.d.\ and independent of $T$,
thus also of $T_{\pi}$. So $\mathfrak{F}_{\wedge}(T)=\mathfrak{F}_{\wedge}(T_{\pi})$. As a consequence
it is no loss of generality to assume $T_{1}=\sup_{i\ge 1}T_{i}$ whenever the supremum is a.s.\
attained.

\begin{Prop} \label{beschrSatz2}
If \eqref{A1} holds the following assertions are equivalent:
\begin{itemize}
 \item[(a)]
  $\sup_{i \geq 1} T_i = 1$ a.s.
 \item[(b)]
  $\exists\,0<\gamma\le 1: \mathfrak{F}_{\wedge} =
    \{F \in \mathcal{P}:F([\gamma c,c])=1\text{ for some }c>0\}$.
 \item[(c)]
  $\delta_c \in \mathfrak{F}_{\wedge}$ for all $c>0$.
 \item[(d)]
  $\delta_c \in \mathfrak{F}_{\wedge}$ for some $c>0$.
\end{itemize}
\end{Prop}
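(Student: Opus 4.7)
The plan is to establish the cycle (a) $\Rightarrow$ (b) $\Rightarrow$ (c) $\Rightarrow$ (d) $\Rightarrow$ (a). Three of the four implications are almost immediate: (b) $\Rightarrow$ (c) because $\delta_c$ has support $\{c\} \subseteq [\gamma c, c]$ for every $c > 0$ (as $\gamma \leq 1$); (c) $\Rightarrow$ (d) is trivial; and for (d) $\Rightarrow$ (a), if $\delta_c \in \mathfrak{F}_{\wedge}$ for some $c > 0$ then both sides of \eqref{MinFP1} are almost surely constant, so equality in distribution forces $c = c/\sup_i T_i$ a.s., which is (a).

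The main work is (a) $\Rightarrow$ (b). I would first upgrade the SFPE to a pathwise identity: under (a), $F \in \mathfrak{F}_\wedge$ if and only if $\prod_{i \geq 1} \overline{F}(tT_i) = \overline{F}(t)$ a.s.\ for every $t \geq 0$. This follows because $T_i \leq \sup_j T_j = 1$ a.s.\ together with monotonicity of $\overline{F}$ gives $\overline{F}(tT_i) \geq \overline{F}(t)$ for every $i$; picking $i_n$ with $T_{i_n} \uparrow 1$ and using left-continuity of $\overline{F}$ yields $\inf_i \overline{F}(tT_i) \leq \overline{F}(t)$ a.s., so the product is a.s.\ bounded above by $\overline{F}(t)$, and equality follows from \eqref{MinFP2}. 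Set $\gamma \gl \esssup T_{(2)}$, where $T_{(2)}$ denotes the second-largest entry of $(T_1, T_2, \ldots)$; then $\gamma \in (0,1]$ by \eqref{A1} and assumption (a). By the rearrangement remark preceding the Proposition we may assume $T_1 = \sup_i T_i$ whenever this supremum is attained. Writing $G \gl -\log \overline{F}$, the summand $G(tT_1) = G(t)$ already exhausts the total $\sum_i G(tT_i) = G(t)$ on the event $\{T_1 = 1\}$, forcing $G(tT_i) = 0$, i.e.\ $tT_i \leq \essinf F$, for every $i \geq 2$ with $T_i > 0$. Letting $t \uparrow \esssup F$ then yields $T_{(2)} \leq \essinf F / \esssup F$ a.s., hence $\essinf F \geq \gamma \cdot \esssup F$. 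This proves $\mathfrak{F}_\wedge \subseteq \{F : F([\gamma c, c]) = 1 \text{ for some } c > 0\}$. For the reverse inclusion, given $F$ supported in $[\gamma c, c]$, on $\{T_1 = 1\}$ one has $X_1/T_1 = X_1 \in [\gamma c, c]$ while $X_i/T_i \geq X_i/\gamma \geq c \geq X_1$ for every $i \geq 2$ with $T_i > 0$, whence $\inf_i X_i/T_i = X_1 \sim F$.

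The main obstacle is the case when $\sup_i T_i = 1$ is a.s.\ \emph{not} attained; then no single index satisfies $G(tT_{i_0}) = G(t)$, so the absorption argument above breaks down. I would handle this by approximation along a sequence $T_{i_n} \uparrow 1$ together with left-continuity of $\overline{F}$; under (a) this forces $\gamma = 1$, so $\{F : F([\gamma c, c]) = 1 \text{ for some } c > 0\}$ reduces to $\{\delta_c : c > 0\}$ and the claim follows from the already-established (c). Additional technical care is needed to handle possible jumps of $\overline{F}$, which is accommodated by systematic use of the left-continuous version of the distribution function.
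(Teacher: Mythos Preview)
Your argument is essentially correct and follows the same overall scheme as the paper: the implications (b)$\Rightarrow$(c)$\Rightarrow$(d)$\Rightarrow$(a) are dispatched immediately, and all the work goes into (a)$\Rightarrow$(b), split according to whether $\gamma<1$ or $\gamma=1$. Where you differ from the paper is in the \emph{method} for each case.

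In the case $\gamma<1$ (so $\mathds{P}(T_1=1)=1$ after rearranging), the paper argues probabilistically: from $X\stackrel{d}{=}X_1\wedge\inf_{i\ge 2}X_i/T_i$ and $X\stackrel{d}{=}X_1$ it deduces $X_1\le\inf_{i\ge 2}X_i/T_i$ a.s., then uses independence of $X_1$ and the remaining infimum to insert a separating constant $c=\esssup X_1$, and finally obtains the lower bound $\gamma c$ by a stopping-time contradiction. Your route via the pathwise identity $\prod_i\overline F(tT_i)=\overline F(t)$ a.s.\ is more analytic and, once that identity is in hand, more direct: dividing out the factor $\overline F(tT_1)=\overline F(t)$ forces $\overline F(tT_i)=1$ for every $i\ge 2$, which immediately gives $tT_i\le\essinf F$ without any stopping-time machinery. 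This is a genuine simplification. (You should make explicit that $\esssup F<\infty$ and $\essinf F>0$, both of which fall out of the same identity by letting $t\to\infty$ or observing that $T_{(2)}>0$ by \eqref{A1}.)

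In the case $\gamma=1$ the paper constructs an auxiliary variable $U_r$ and shows $\mathds{E}\overline F(tU_r)=1$, then derives a contradiction from $\mathds{P}(U_r>r)>0$. Your sketch---use a random subsequence $T_{i_n}\uparrow 1$ in the pathwise identity so that infinitely many factors tend to $\overline F(t)$, forcing $\overline F(t)\in\{0,1\}$---is a valid alternative and arguably more transparent. Two small points deserve tightening. First, your case split reads as ``supremum a.s.\ attained'' versus ``a.s.\ not attained'', but the intermediate regime $0<\mathds{P}(\sup_iT_i\text{ attained})<1$ is not explicitly addressed; fortunately your approximation argument already works whenever $\mathds{P}(\sup_iT_i\text{ not attained})>0$, since a deterministic conclusion about $\overline F$ follows from the pathwise identity on any event of positive probability. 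Second, invoking ``the already-established (c)'' inside the proof of (a)$\Rightarrow$(b) is formally circular; just verify directly that under (a) each $\delta_c$ is a fixed point, which is a one-line computation.
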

\begin{proof}
The implications "(b) $\Rightarrow$ (c) $\Rightarrow$ (d) $\Rightarrow$ (a)" being obvious, we must only 
prove "(a) $\Rightarrow$ (b)". Define $\gamma\gl 1$,
if $\mathds{P}(\exists\, i \in \mathds{N}: \, T_i = 1) < 1$, and $\gamma\gl\esssup_{i\ge 2}T_{i}$,
if  $\mathds{P}(\exists\, i \in \mathds{N}: \, T_i = 1) = 1$ and w.l.o.g.\ $T_1 = \sup_{i \geq 1} T_i$. 
Note for the latter situation that $\gamma>0$ by \eqref{A1}.
The two cases $0<\gamma<1$ (Case 1) and $\gamma=1$ (Case 2) will now be discussed separately.

\vspace{.2cm}\noindent
{\bf Case 1.} Pick any $F\in \mathfrak{F}_{\wedge}$ and let $X, X_1,X_2,\ldots$ denote a sequence of i.i.d.\ random variables with common distribution $F$. As $T_1 = 1$ a.s.\ the SFPE reads
\begin{equation} \label{VtlgUngl}
X\ \stackrel{d}{=}\ X_1 \wedge \inf_{i \geq 2} \frac{X_i}{T_i}.
\end{equation}
and clearly entails $X_1 \leq \inf_{i \geq 2} X_i/T_i$ a.s. Now use the independence of $X_{1}$
and $\inf_{i \geq 2} X_i/T_i$ to infer that this can only hold if
$$ X_{1}\ \le\ c\ \le\ \inf_{i \geq 2} X_i/T_i\quad\text{a.s.} $$
for some $c$, w.l.o.g.\ $c\gl\esssup X_{1}$ which is positive, for $F\ne\delta_{0}$.
It remains to show that $X \ge \gamma c$ a.s. Assuming the contrary, we also have $\mathds{P}(X < u \gamma c) > 0$ for some $u \in (0,1)$. Since $\gamma=\esssup_{i\ge 2}T_{i}$ in the present case, the stopping time $\nu\gl\inf\{i\ge 2:T_{i}>u\gamma\}$ is finite with positive probability, and we have that $\mathds{P}(X_{\nu}\in\cdot|\nu<\infty)=\mathds{P}(X\in\cdot)$ because the $X_{i}$ and
$T$ are independent. But this leads to the following contradiction:
\begin{eqnarray*}
0
& = &
\mathds{P} \left( \inf_{i \geq 2} \frac{X_i}{T_i} < c \right) \\
& \geq &
\mathds{P} \left(\nu<\infty,X_{\nu}<u\gamma c \right) \\
& \geq &
\mathds{P}(\nu<\infty)\,\mathds{P} \left( X < u \gamma c \right)\ >\ 0.
\end{eqnarray*}
So we have proved $F([\gamma c,c])=1$. Conversely, if we pick any $F\in \mathcal{P}$ with
this property for some $c>0$, then $F\in \mathfrak{F}_{\wedge}$ is immediate from \eqref{VtlgUngl} because we have there $X_1 \wedge \inf_{i \geq 2} X_i/T_i=X_{1}$ a.s.

\vspace{.2cm}\noindent
{\bf Case 2.} If $\gamma=1$ we cannot assume $T_{1}=\sup_{i\ge 1}T_{i}$ and then resort to the above argument because with positive probability the supremum may not be attained. On the other hand, the claim reduces
here to $\mathfrak{F}_{\wedge}=\{\delta_{c}:c>0\}$ and it is easily verified that any $\delta_{c}$
is indeed a solution. For the reverse inclusion, pick any solution $F$ and suppose it is not concentrated at a single point, thus $\overline{F}(s)\in (0,1)$ for some $s>0$. 
For $r \in (0,1)$, define a r.v. $U_r$ as follows:
$$ U_r\ \gl\ 
\begin{cases}
\sup_{i \not = k} T_i             & \text{if there is a $k \geq 1$ such that $T_k = 1$,}        \\
T_{\tau_r}                & \text{if $T_i < 1$ for all $i \geq 1$,}
\end{cases} $$
where $\tau_r \gl \inf\{i \geq 1: r < T_i < 1\}$. Observe that in the second case $N=\infty$ and 
$U_{r_{k}}\uparrow 1$ for any choice $r_{k}\uparrow 1$, so that $\prod_{i\ne\tau_{r}}\overline{F}
(tT_{i})\le\lim_{k\to\infty}\overline{F}(tU_{r_{k}})=\overline{F}(t)$ by left continuity. 
For any $t \ge 0$, we now infer
$$ \overline{F}(t)
 \ =\ \mathds{E} \prod_{i \geq 1} \overline{F}(t T_i)
 \ \le\ \overline{F}(t) \cdot \mathds{E} \overline{F}(t U_r) $$
and therefore $\mathds{E} \overline{F}(t U_r)=1$ for any $t$ such that $\overline{F}(t)\in (0,1)$. 
By left continuity, $\overline{F}(rt) < 1$ for any such $t$ and some $r\in (0,1)$. However, $\mathds{P}
(U_r > r)>0$ then leads to the contradiction
$$ 1\ =\ \mathds{E} \overline{F}(t U_r)\ \le\ \mathds{P}(U_r \leq r) + \overline{F}(rt)\,
\mathds{P}(U_r > r)\  <\ 1. $$
We hence conclude that $F$ must be concentrated in a single point.
\end{proof}

\begin{Rem} \label{CompactSupport}
\rm As a particular consequence of Proposition  \ref{beschrSatz2},  all solutions to \eqref{MinFP1} have compact support if \eqref{A1} and $\sup_{i \geq 1} T_i = 1$ a.s.\  hold true. As to a reverse conclusion, let us point out the following:

\vspace{.2cm}\noindent
If \eqref{A1} holds true and $\mathbb{P}(N<\infty)=1$, then the assertions
\begin{itemize}
 \item[(a)]
  $\sup_{i \geq 1} T_i = 1$ a.s.
 \item[(b)]
  There exists $F \in \mathfrak{F}_{\wedge}$ with compact support.
\end{itemize}
are equivalent.

\vspace{.2cm}
With only "(b) $\Rightarrow$ (a)" to be proved, let $F$ be an element of $\mathfrak{F}_{\wedge}$ with compact support and $X$ a random variable with distribution $F$, so $C\gl \esssup X\in (0,\infty)$. Suppose now there exists $r\in (0,1)$ such that $q\gl\mathds{P}(\sup_{i \geq 1} T_i \leq r) > 0$. We can pick $r$ and $t>C$ in such a way that
$rt<C<t$ and thus $\overline{F}(rt)>\overline{F}(t)=0$. But then
\begin{eqnarray*}
0\ =\ \overline{F}(t)& = &
\mathds{E} \prod_{i \geq 1} \overline{F}(t T_i) \\
& \geq &
\mathds{E} \prod_{i \geq 1} \overline{F}(t T_i)\, \mathbbm{1}_{\{\sup T_i \leq r \}}  \\
& \geq & \mathds{E}\overline{F}(rt)^N \mathbbm{1}_{\{\sup T_i \leq r \}} \ >\ 0.
\end{eqnarray*}
which is a contradiction. Consequently, $\mathds{P}(\sup_{i \geq 1} T_i \geq 1) = 1$ which in combination with $\mathfrak{F}_{\wedge} \not = \emptyset$ and Proposition \ref{beschrSatz1}
proves (a).
\end{Rem}

We close this section with a lemma that shows that any $F\in\mathfrak{F}_{\wedge}$
is continuous at 0 and that a search for solutions putting mass on $[0,\infty)$ is
actually no restriction.

\begin{Lemma}  \label{wlog}
Suppose \eqref{A1} and let $F\ne\delta_{0}$ be any distribution on $\mathds{R}$ solving Eq.\ \eqref{MinFP1}. Then $F$ is continuous at $0$ and concentrated either on $[0,\infty)$ or $(-\infty,0]$. In the latter case, if $X\stackrel{d}{=}F$ and
$G$ denotes the distribution of $-X^{-1}$ ($<\infty$ a.s.), then $G\in\mathfrak{F}_{\wedge}
(T^{-1})$, where $T^{-1}\gl (T_{i}^{-1}\mathds{1}_{\{T_{i}>0\}})_{i\ge 1}$.
\end{Lemma}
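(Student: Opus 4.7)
The plan breaks into two parts, tackled in order.

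First, I would establish continuity at $0$ and the support dichotomy by a single mechanism. Let $p_{+}\gl\mathds{P}(X>0)$, $p_{0}\gl\mathds{P}(X=0)$, $p_{-}\gl\mathds{P}(X<0)$ for $X\stackrel{d}{=}F$, and set $N=\sum_{i\ge 1}\mathbbm{1}_{\{T_{i}>0\}}$, which under \eqref{A1} satisfies $N\ge 1$ a.s.\ and $\mathds{P}(N>1)>0$. Extending the convention that $X_{i}/T_{i}=\infty$ on $\{T_{i}=0\}$ and conditioning on $T$, Eq.\ \eqref{MinFP1} yields
\[
p_{+}\ =\ \mathds{P}\bigl(\textstyle\inf_{i:T_{i}>0}X_{i}/T_{i}>0\bigr)\ =\ \mathds{E}\,p_{+}^{N}\qquad\text{and}\qquad p_{-}\ =\ 1-\mathds{E}(1-p_{-})^{N},
\]
since the event on the left amounts to asking that all the relevant i.i.d.\ copies $X_{i}$ be positive, respectively that some such copy be negative.

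For any $x\in(0,1)$, one has $x^{N}\le x$ a.s.\ with strict inequality on $\{N>1\}$, so $\mathds{E} x^{N}<x$; substituting $x=p_{+}$ into the first identity excludes $p_{+}\in(0,1)$, and substituting $x=1-p_{-}$ into $\mathds{E}(1-p_{-})^{N}=1-p_{-}$ excludes $p_{-}\in(0,1)$. Thus $p_{+},p_{-}\in\{0,1\}$; they cannot both equal $1$, and $F\ne\delta_{0}$ rules out $p_{0}=1$, so exactly one of $p_{+},p_{-}$ equals $1$ and $p_{0}=0$. This simultaneously gives continuity of $F$ at $0$ and the dichotomy that $F$ is concentrated on $(0,\infty)$ or on $(-\infty,0)$.

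Second, in the case that $F$ is supported on $(-\infty,0)$, the map $\varphi(y)=-1/y$ is continuous and strictly increasing on $(-\infty,0)$ and hence commutes with the infimum of any countable family lying in that interval. Applied to $X=\inf_{i:T_{i}>0}X_{i}/T_{i}$, writing $Y_{i}\gl -X_{i}^{-1}\in(0,\infty)$ (a.s.\ finite since $X_{i}<0$ a.s.), this yields
\[
Y\ \gl\ -X^{-1}\ =\ \inf_{i:T_{i}>0}\bigl(-T_{i}/X_{i}\bigr)\ =\ \inf_{i:T_{i}>0}T_{i}Y_{i}\ =\ \inf_{i:T_{i}^{-1}>0}Y_{i}/T_{i}^{-1}.
\]
Since the $Y_{i}$ are i.i.d.\ with law $G$ and independent of $T^{-1}$, this identity is precisely Eq.\ \eqref{MinFP1} for weights $T^{-1}$, proving $G\in\mathfrak{F}_{\wedge}(T^{-1})$. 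No serious obstacle is anticipated; the only technical point is the interchange of $\varphi$ with the infimum, which is routine from its monotonicity and continuity on $(-\infty,0)$.
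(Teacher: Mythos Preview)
Your argument is correct and follows essentially the same route as the paper: both reduce the support dichotomy and continuity at $0$ to the fact that the probability generating function of $N$ has only $0$ and $1$ as fixed points under \eqref{A1}, and both dispatch the final assertion by the monotone bijection $y\mapsto -1/y$. One small imprecision: the displayed identity $p_{+}=\mathds{E}\,p_{+}^{N}$ need not be an equality when $\mathds{P}(N=\infty)>0$, since $\{\inf_{i:T_i>0}X_i/T_i>0\}$ is in general only a subset of $\{X_i>0\text{ for all }i\text{ with }T_i>0\}$; however the resulting inequality $p_{+}\le\mathds{E}\,p_{+}^{N}$ combined with $\mathds{E}\,x^{N}<x$ for $x\in(0,1)$ still forces $p_{+}\in\{0,1\}$, so your conclusion stands (and your identity for $p_{-}$ is genuinely an equality).
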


\begin{proof}
Let $X,X_{1},X_{2},\ldots$ be i.i.d.\ with distribution $F$ and independent of $T$. 
In view of what has been mentioned before Proposition \ref{beschrSatz2} we may assume w.l.o.g. that $T_{i}>0$ iff $N\ge i$. Then
\begin{eqnarray*}
\overline{F}(0)
&=&
\mathds{P}(X\ge 0)\\
&=&
\mathds{P}(X_{i}\ge 0\text{ for }1\le i\le N)
\ =\ \mathds{E}\overline{F}(0)^{N},
\end{eqnarray*}
whence $\overline{F}(0)$ must be a fixed point of the generating function of $N$
in $[0,1]$. Now use \eqref{A1} to infer $\overline{F}(0)\in\{0,1\}$. Next 
consider $\mathds{P}(X>0)=\overline{F}(0+)$ and suppose it to be $<1$. Then we infer with the help of \eqref{MinFP1}
\begin{eqnarray*}
\overline{F}(0+)
&=&
\lim_{t\downarrow 0}\overline{F}(t)\\
&\le&
\lim_{t\downarrow 0}\mathds{E}\prod_{i=1}^{N\wedge n}\overline{F}(t
T_{i})\ =\ \mathds{E}\overline{F}(0+)^{N\wedge n}
\end{eqnarray*}
for each $n\ge 1$ and thereupon $\overline{F}(0+)\le\mathds{E}\overline{F}(0+)^{N}\mathds{1}_{\{N<\infty\}}$. On the other hand, by another appeal to \eqref{A1}, $\mathds{E}s^{N}\le s$ for each $s\in [0,1)$ with equality holding iff $s=0$.
Consequently, $\overline{F}(0+)=0$, which is clearly impossible as $F\ne\delta_{0}$. We thus conclude $\overline{F}(0)=\overline{F}(0+)=1$
and thereby $\mathds{P}(X=0)=\overline{F}(0)-\overline{F}(0+)=0$ which proves the continuity of $F$ at 0.
As for the final assertion, it suffices to note that $X\stackrel{d}{=}\inf_{i\ge 1}X_{i}/T_{i}$ is clearly equivalent to $-X^{-1}\stackrel{d}{=}\inf_{i\ge 1}(-X_{i}^{-1})T_{i}$.
\end{proof}

Unless stated otherwise, we will always assume \eqref{A1} and \eqref{A2} hereafter. As a consequence of \eqref{A2}, 
we infer that the closed multiplicative subgroup $\G(T)$ generated by $T$ cannot be $\{1\}$, the trivial subgroup. So we have either $\G(T) = r^{\mathds{Z}}$ for some $r > 1$ ($r$-geometric case) or $\G(T) =\mathds{R}^{+}$ (continuous case).

\section{Connection with weighted branching processes} \label{WBP}

Let $\V$ be the infinite tree with vertex set $\bigcup_{n \in \mathds{N}_0} \mathds{N}^n$, where $\mathds{N}^0$ contains only the empty tuple $\varnothing$. We abbreviate $v = (v_1,\ldots,v_n)$ by $v_1 \ldots v_n$ and write $vw$ for the vertex $(v_1,\ldots,v_n,w_1,\ldots,w_m)$, where $w = (w_1,\ldots,w_m)$. Furthermore, $|v|=n$ and $|v|<n$ will serve as shorthand notation for $v \in \mathds{N}^n$ and $v \in \mathds{N}^k$ for some $k < n$, respectively. $|v| \leq n$, $|v| \geq n$ and $|v| > n$ are defined similarly. Let $(T(v))_{v \in \V}$ denote a family of i.i.d.\ copies of $T$. For sake of brevity, suppose $T=T(\varnothing)$. 
Interpret $T_i(v)$ as a weight attached to the edge $(v,vi)$ in the infinite tree $\V$. Then put $L(\varnothing) \gl 1$ and
$$ L(v)\ \gl\ T_{v_1}(\varnothing) \cdot \ldots \cdot T_{v_n}(v_1 \ldots v_{n-1}) $$
for $v = v_1 \ldots v_n \in \V$. So $L(v)$ gives the total multiplicative weight along the unique path  from $\varnothing$ to $v$. For $n \geq 1$, let $\mathcal{A}_n$ denote the $\sigma$-algebra generated by the sequences $T(v)$, $|v|<n$, \textit{i.e.},
$$ \mathcal{A}_n\ \gl\  \sigma \left(T(v): |v| < n \right) $$
Put  $\mathcal{A}_0\gl \{\emptyset, \Omega\}$ and $\mathcal{A}_{\infty}\gl \sigma(\mathcal{A}_n: \, n \geq 0)=\sigma(T(v):v\in\V)$.

Let us further introduce the following bracket operator $[\cdot]_u$ for any $u \in \V$. Given any
function $\Psi=\psi((T(v))_{v \in \V})$ of the weight ensemble $(T(v))_{v \in \V}$ pertaining to $\V$, define 
$$ [\Psi]_u\ \gl\ \psi((T(uv))_{v \in \V}) $$
to be the very same function, but for the weight ensemble pertaining to the subtree rooted at $u$.
Any branch weight $L(v)$ can be viewed as such a function, and we then obtain
$[L(v)]_u=T_{v_1}(u) \cdot \ldots \cdot T_{v_n}(uv_1 \ldots v_{n-1})$ if $v = v_1 \ldots v_n$, and thus
$[L(v)]_u = L(uv)/L(u)$ whenever $L(u)>0$.

The weighted branching process (WBP) associated with $(T(v))_{v \in \V}$ is now defined as
$$ W_n\ \gl\  \sum_{|v|=n} L(v), \quad n \geq 0.  $$
For any $\alpha \geq 0$, we can replace the $T(v)$ with $T^{(\alpha)}(v)\gl(T_{i}(v)^{\alpha})_{i\ge 1}$ which leads to the branch weights $L^{(\alpha)}(v)\gl L(v)^{\alpha}$ and the associated
WBP 
$$ W_n^{(\alpha)}\ \gl\ \sum_{|v|=n} L(v)^{\alpha},\quad n\ge 0. $$
Note that $T^{(0)}(v)=(\mathbbm{1}_{\{T_{i}(v)>0\}})_{i\ge 1}$, so that $W_{n}^{(0)}=\sum_{|v|=n}
\mathbbm{1}_{\{L(v)>0\}}$ counts the positive branch weights in generation $n$. If $N<\infty$ a.s., then $(W_{n}^{(0)})_{n\ge 0}$ forms a
Galton-Watson process with offspring distribution $\mathds{P}(N\in\cdot)$, for
$W_{1}^{(0)}\stackrel{d}{=}N$. Suppose there exists an $\alpha > 0$ such that $m(\alpha) \leq 1$
with $m$ as defined in \eqref{mgf}. Then the sequence $(W_n^{(\alpha)})_{n \geq 0}$ constitutes a nonnegative supermartingale with respect to $(\mathcal{A}_n)_{n \geq 0}$ and hence converges a.s.
to $W^{(\alpha)}\gl \liminf_{n \to \infty} W_n^{(\alpha)}$. By Fatou's lemma,
$$ 0\ \le\ \mathds{E} W^{(\alpha)}\ \leq\ \liminf_{n \to \infty} \mathds{E} W_n^{(\alpha)}
\ =\ \lim_{n\to\infty}m(\alpha)^{n}\ \leq\ 1. $$
which gives $W^{(\alpha)} = 0$ a.s. if $m(\alpha) < 1$. In the case $m(\alpha)=1$, we have the dichotomy $\mathds{E} W^{(\alpha)} = 0$ or $\mathds{E} W^{(\alpha)} = 1$ (\textit{cf.}\ Theorem \ref{Biggins} in the Appendix, or Biggins \cite{Bi}, Lyons \cite{Ly} and Alsmeyer and Iksanov \cite{AI} for details). Henceforth, let $\Lambda_{\alpha}$ and $\varphi_{\alpha}$ denote the distribution and Laplace transform, respectively, of $W^{(\alpha)}$. 

\begin{Rem} \label{remcharexp} \rm
As $m(\alpha)=1$ and $\mathds{E} W^{(\alpha)}=1$ (or, equivalently, $\mathds{P}(W^{(\alpha)}>0)>0$) for some $\alpha>0$ will be a frequent assumption hereafter, it is noteworthy that this forces $\alpha$ to be the characteristic exponent of $T$, that is, the \emph{minimal} $\beta>0$ with $m(\beta)=1$. For a proof using Theorem \ref{Biggins} we refer to Corollary \ref{charcharexp} in the Appendix. Due to our standing assumption \eqref{A1}, Theorem \ref{Biggins}
further implies that $\mathds{P}(W^{(\alpha)}>0)>0$ is actually equivalent to the a.s.\
positivity of $W^{(\alpha)}$.
\end{Rem}

As explained before Proposition \ref{beschrSatz2}, the mapping $M$ defined in \eqref{MapFP} is invariant under $\sigma(T)$-measurable rearrangements of the $T_i$, $i \geq 1$. It is therefore stipulated hereafter that $T_i > 0$ if, and only if, $1\le i \le N$.

In order to provide the connection of the previously introduced weighted bran\-ching model with the SFPE \eqref{MinFP1}, let $(X(v))_{v\in\V}$ be a family of independent copies of $X$ which is also independent of $(T(v))_{v\in\V}$. If $X\stackrel{d}{=}F$, then $n$-fold iteration of
\eqref{MinFP1} yields
\begin{equation} \label{MinFPWB}
X \stackrel{d}{=} \inf_{|v|=n} \frac{X(v)}{L(v)}
\end{equation}
for all $n \geq 0$ which in terms of the survival function $\overline{F}$ becomes
\begin{equation} \label{MinFPWBsf}
\overline{F}(t) = \mathds{E} \prod_{|v|=n} \overline{F}(t L(v)),\quad t \ge 0.
\end{equation}

\section{Main results}\label{MainResults}

We continue with the statement of the two main results that will be derived in this article. Theorem \ref{alpha-elementar-Charakterisierung} provides the connection between the existence of certain regular solutions to \eqref{MinFP1} and the existence of the characteristic exponent of $T$, while Theorem \ref{alpha-elementar-Darstellung} is a representation result which states that any regular solution is a certain Weibull mixture (cf.\ Definition \ref{Wrnualpha} below). The definition of an $\alpha$-regular fixed point is part of the following definition.

\begin{Def} \label{fixptype} \rm
Let $\alpha>0$ and $F\in\mathfrak{F}_{\wedge}$. Put $D_{\alpha}\overline{F}(t)\gl 
t^{-\alpha}(1-\overline{F}(t))$ for $t>0$. Then $F$ is called
\begin{itemize}
\item[{(1)}] \emph{$\alpha$-bounded}, if $\limsup_{t\downarrow 0}D_{\alpha}\overline {F}(t) <\infty$.
\item[{(2)}] \emph{$\alpha$-regular}, if 
  $0<\liminf_{t \downarrow 0}D_{\alpha}\overline{F}(t) \le \limsup_{t     \downarrow 0} D_{\alpha}\overline{F}(t) <\ \infty$.
\item[{(3)}] \emph{$\alpha$-elementary}, if
 \begin{itemize}
 \item[{---}] in the case $\G(T)=\mathds{R}^{+}$ --- there exists a    constant $c > 0$ such that $\lim_{t\downarrow 0}D_{\alpha}\overline{F}(t)=c$.
 \item[{---}] in the case $\G(T) = r^{\mathds{Z}}$ for some $r > 1$ --- for each   $s \in [1,r)$ there exists a positive constant $h(s)$ such that 
  $\lim_{n\to\infty}D_{\alpha}\overline{F}(sr^{-n})=h(s)$ for each 
  $s\in [1,r)$.
 \end{itemize}
\end{itemize}
The sets of $\alpha$-bounded, $\alpha$-regular and $\alpha$-elementary fixed point are denoted by $\mathfrak{F}_{\wedge,b}^{\alpha},\mathfrak{F}_{\wedge,r}^{\alpha}$ and $\mathfrak{F}_{\wedge,e}^{\alpha}$, respectively.
\end{Def}

\begin{Rem} \rm
(a) The notion of an \emph{$\alpha$-elementary} fixed point has been introduced by Iksanov \cite{Ik} in his study of the smoothing transformation $M_{\Sigma}$
given in \eqref{MapST} and the associated SFPE \eqref{SumFP1}. His definition is the same as ours for the continuous case when replacing $D_{\alpha}\overline{F}$ with $D_{\alpha}\varphi$ where $\varphi$ denotes the Laplace transform of a solution to \eqref{SumFP1}. 

\vspace{.2cm}\noindent
(b) Given the existence of the characteristic exponent $\alpha$, Guivarc'h \cite{Gui} and later Liu \cite{Liu} called a (nonnegative) solution to \eqref{SumFP1} with Laplace transform $\varphi$ \emph{canonical} if it can be obtained as the stable transformation of a solution to the very same equation for the weight vector $T^{(\alpha)}=(T_{i}^{\alpha})_{i\ge 1}$. If the latter solution has Laplace transform $\psi$ this means that $\varphi(t)=\psi(t^{\alpha})$ for all $t\ge 0$. This definition appears to be more restrictive than that of an $\alpha$-elementary fixed point because the latter definition is valid for any $\alpha>0$. On the other hand, once shown that an $\alpha$-elementary fixed point actually exists only if $\alpha$ is the characteristic exponent of $T$ (see Theorem \ref{alpha-elementar-Charakterisierung}), "$\alpha$-elementary" (at least in the more restrictive sense of Iksanov) and "canonical" turn out to be just different names for the same objects (see Theorem 2 in \cite{Ik} and also Theorem \ref{alpha-elementar-Darstellung} below).

\vspace{.2cm}\noindent
(c) Note for the $r$-geometric case that $\inf_{s\in(1,r]}h(s)$ must be positive,
for $(sr^{n})^{-\alpha}(1-\overline{F}(sr^{n}))\ge (s/r)^{-\alpha}r^{-\alpha(n+1)}(1-\overline{F}(r^{n+1}))$ for all $s\in (1,r]$ and $n\in\mathds{Z}$. After this observation, we see that any $\alpha$-elementary fixed point is also $\alpha$-regular, and since $\alpha$-regularity trivially implies $\alpha$-boundedness, we have that
$$ \mathfrak{F}_{\wedge,e}^{\alpha}\ \subseteq\ \mathfrak{F}_{\wedge,r}^{\alpha}\ \subseteq\ \mathfrak{F}_{\wedge,b}^{\alpha}. $$
\end{Rem}

\begin{Theorem} \label{alpha-elementar-Charakterisierung}
Suppose \eqref{A1} and \eqref{A2}. Then the following assertions are equivalent for any $\alpha>0$:
\begin{itemize}
 \item[(a)]  Eq.\ \eqref{MinFP1} has an $\alpha$-elementary      solution
    $(\mathfrak{F}_{\wedge,e}^{\alpha}\ne\emptyset)$.
 \item[(b)]  Eq.\ \eqref{MinFP1} has an $\alpha$-regular solution
    $(\mathfrak{F}_{\wedge,r}^{\alpha}\ne\emptyset)$.

 \item[(c)]  $m(\alpha)=1$ and $\mathds{P}(W^{(\alpha)}>0)>0$.
 \item[(d)] $m(\alpha)=1$, the random walk $(\overline{S}_{\alpha,n})_{n \ge 0}$ with increment distribution $\Sigma_{\alpha,1}(B)\gl\mathds{E}\sum_{i\ge 1}T_{i}^{\alpha}\mathbbm{1}_{B}(T_{i})$, $B\in\mathfrak{B}$, converges to $\infty$ a.s.\ and
    $$ \int_{(1,\infty)} \left[\frac{u \log u}
    {\mathds{E}(\overline{S}_{\alpha,1}^+ \wedge \log u)} \right] 
    \mathds{P} (W_1^{(\alpha)} \in \, du) < \infty. $$
\end{itemize}
\end{Theorem}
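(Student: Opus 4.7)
The plan is to prove the cycle (a) $\Rightarrow$ (b) $\Rightarrow$ (c) $\Rightarrow$ (a), together with the separate equivalence (c) $\Leftrightarrow$ (d). The implication (a) $\Rightarrow$ (b) is immediate from the inclusion $\mathfrak{F}_{\wedge,e}^{\alpha} \subseteq \mathfrak{F}_{\wedge,r}^{\alpha}$ observed in the remark preceding the theorem. The equivalence (c) $\Leftrightarrow$ (d) is the classical $L\log L$-type moment criterion for Biggins' martingale $(W_n^{(\alpha)})_{n \ge 0}$ to be uniformly integrable, equivalently to have an a.s.\ positive limit, stated as Theorem \ref{Biggins} in the Appendix and synthesizing the results of Biggins \cite{Bi}, Lyons \cite{Ly} and Alsmeyer--Iksanov \cite{AI} applied to the random walk with step distribution $\Sigma_{\alpha,1}$. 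Thus only the directions (c) $\Rightarrow$ (a) and (b) $\Rightarrow$ (c) remain to be argued.

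For (c) $\Rightarrow$ (a) I would exhibit an explicit $\alpha$-elementary solution as a Weibull mixture driven by $W^{(\alpha)}$. Under (c), Remark \ref{remcharexp} yields $W^{(\alpha)} > 0$ a.s.\ with $\mathds{E} W^{(\alpha)} = 1$ and the branching identity $W^{(\alpha)} = \sum_{i \ge 1} T_i^{\alpha}\, [W^{(\alpha)}]_i$ a.s., where the $[W^{(\alpha)}]_i$ are i.i.d.\ copies of $W^{(\alpha)}$ independent of $T$. Pick $h:(0,\infty) \to (0,\infty)$ constant when $\G(T) = \mathds{R}^{+}$ and multiplicatively $r$-periodic when $\G(T) = r^{\mathds{Z}}$, and set
\begin{equation*}
\overline{F}(t)\ \gl\ \mathds{E}\exp\bigl(-h(t)\,t^{\alpha}\,W^{(\alpha)}\bigr).
\end{equation*}
Since $T_i \in \G(T) \cup \{0\}$, periodicity forces $h(tT_i) = h(t)$ on $\{T_i > 0\}$; conditioning on $T$ first and invoking the branching identity gives $\overline{F}(t) = \mathds{E}\prod_{i \ge 1}\overline{F}(tT_i)$, so $F \in \mathfrak{F}_{\wedge}$. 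The bound $1-e^{-x} \le x$ combined with $\mathds{E} W^{(\alpha)} = 1$ and dominated convergence then show $t^{-\alpha}(1-\overline{F}(t)) \to h(t)$ as $t \downarrow 0$ along each residue class modulo $\G(T)$, so $F \in \mathfrak{F}_{\wedge,e}^{\alpha}$.

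The substantive part of the theorem is the converse (b) $\Rightarrow$ (c), which I expect to be the main obstacle because $\alpha$-regularity only provides two-sided bounds and no limiting value of $t^{-\alpha}(1-\overline{F}(t))$ on which to hang a direct computation. My approach is to exploit the multiplicative martingale $Y_n(t) \gl \prod_{|v|=n}\overline{F}(tL(v))$, which is $[0,1]$-valued with $\mathds{E} Y_n(t) = \overline{F}(t)$ by the iterated SFPE \eqref{MinFPWBsf}. Enlarging $c_2$ so that $1-\overline{F}(s) \le c_2 s^{\alpha}$ for \emph{all} $s > 0$, the identity $1-\overline{F}(t) = \mathds{E}[1-Y_n(t)]$ combined with the elementary $1-\prod_v(1-a_v) \le \sum_v a_v$ produces the upper bound $1-\overline{F}(t) \le c_2\,t^{\alpha}\,m(\alpha)^n$ for every $n$; together with the $\alpha$-regular lower bound $1-\overline{F}(t) \ge c_1 t^{\alpha}$ this forces $m(\alpha) \ge 1$. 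The opposite inequality $m(\alpha) \le 1$ is more delicate and relies on the a.s.\ convergence of $(Y_n(t))_{n\ge 0}$: if $m(\alpha) > 1$, then on the event of non-extinction guaranteed by \eqref{A1}, a Borel--Cantelli-type argument applied to the branching recursion $Y_{n+1}(t) = \prod_i [Y_n]_i(tT_i)$ shows that $Y_n(t) \to 0$ a.s., whence $\overline{F}(t) = \mathds{E} Y_n(t) \to 0$ for every $t > 0$, contradicting $F \ne \delta_0$. Hence $m(\alpha) = 1$, and $(Y_n(t))_{n \ge 0}$ converges in $L^{1}$ to some $Y_{\infty}(t)$ with $\mathds{E} Y_{\infty}(t) = \overline{F}(t) \in (0,1)$. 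The disintegration and pathwise renewal equation developed in Section \ref{DisintReq}, combined with the $\alpha$-regularity of $\overline{F}$, finally identify $-\log Y_{\infty}(t) = h(t)\,t^{\alpha}\,W^{(\alpha)}$ a.s.\ for a suitable $\G(T)$-periodic $h$, whence the non-triviality of $\overline{F}$ precludes $W^{(\alpha)} \equiv 0$ and yields (c).
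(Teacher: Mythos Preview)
Your treatment of (a) $\Rightarrow$ (b), (c) $\Leftrightarrow$ (d), and (c) $\Rightarrow$ (a) is correct and matches the paper. In (b) $\Rightarrow$ (c), your Step~1 showing $m(\alpha) \ge 1$ via the subadditivity bound $1-\prod_v(1-a_v) \le \sum_v a_v$ is valid and is a nice complement to what the paper does.

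The gap is in your Step~2. The claim that ``if $m(\alpha) > 1$ then a Borel--Cantelli-type argument on the branching recursion forces $Y_n(t) \to 0$ a.s.'' is not substantiated, and I do not see how to make it work. The multiplicative martingale $Y_n(t)$ is bounded in $[0,1]$ and therefore always converges a.s.\ and in $L^1$ to a limit with mean $\overline{F}(t)$, regardless of the value of $m(\alpha)$; convergence alone gives no contradiction. Any attempt to compare $-\log Y_n(t)$ with $t^\alpha W_n^{(\alpha)}$ requires $\sup_{|v|=n}L(v)\to 0$ so that $\alpha$-regularity applies to every factor, and in the paper this is supplied by Lemma~\ref{Transienz} \emph{only after} $m(\alpha)\le 1$ has been established. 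Without that, there is no control on the large $L(v)$ and the heuristic ``many factors, each $<1$'' does not yield a proof.

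The paper closes this direction differently. Lemma~\ref{Iksanowtrick} derives the telescoping identity
\[
1\ =\ \mathds{E}\sum_{i=1}^N T_i^\alpha\,\frac{D_\alpha\overline{F}(tT_i)}{D_\alpha\overline{F}(t)}\,\prod_{j<i}\overline{F}(tT_j)
\]
and applies Fatou as $t\downarrow 0$ to obtain $1\ge (c_1/c_2)\,m(\alpha)^n$ for every $n$, hence $m(\alpha)\le 1$. Proposition~\ref{suffcharacexp} then finishes: once $m(\alpha)\le 1$, Lemma~\ref{Transienz} gives $R_n\to 0$ a.s., so $\alpha$-regularity yields $-\log\overline{\mathcal{F}}_n(t)\asymp t^\alpha W_n^{(\alpha)}$ as $n\to\infty$; since the left side has a positive limit with positive probability, so does $W_n^{(\alpha)}$, forcing $m(\alpha)=1$ and $\mathds{P}(W^{(\alpha)}>0)>0$. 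Your Step~3 (invoking the full pathwise-renewal identification of Theorem~\ref{PRE}) would also conclude $W^{(\alpha)}\not\equiv 0$ once $m(\alpha)=1$ is in hand, but it is heavier than needed and, crucially, its prerequisites (Lemma~\ref{OtalphaleqWalpha} and Theorem~\ref{PRE}) themselves presuppose $m(\alpha)\le 1$, so it cannot be used to circumvent the missing Step~2.
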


The proof of this theorem will be given in Section \ref{FRE}. We note that the equivalence statement of (c) and (d) is part of Theorem \ref{Biggins} and only included here for completeness. Let us further point out the connection of our result with a similar one on $\alpha$-elementary fixed points of the smoothing transformation obtained by Iksanov \cite{Ik}. By Lemma A.3 in \cite{Ik}, each \emph{continuous} $\alpha$-elementary solution $\overline{F}$  to \eqref{MinFP2} is the Laplace transform of a probability measure on $[0,\infty)$ solving \eqref{SumFP1}. Therefore, under the continuity restriction, a part of our theorem could be deduced from Theorem 2 in \cite{Ik}. On the other hand, the latter result strongly hinges on Proposition 1 in the same reference the proof of which contains a serious flaw (occuring in Eq.\ (14) on p.\,36 where it is mistakenly assumed that $q$ does not depend on $v$. Without this assumption the subsequent argument breaks down completely and there seems to be no obvious way to fix it under the stated assumptions).

In order to state our second theorem, the following definition of certain classes of Weibull mixtures is given, where the definitions of $r\perWeib(h,\alpha)$, $\Weib(c,\alpha)$ and $\mathfrak{H}(r,\alpha)$ should be recalled from the Introduction.

\begin{Def} \label{Wrnualpha} \rm
Let $\alpha > 0$ and $\Lambda$ be a probability measure on $\mathds{R}^{+}$.
Then
\begin{itemize}
 \item[(a)]
 $\mathfrak{W}_{\Lambda}(1,\alpha)$ denotes the class of 
 $\Lambda$-mixtures of $\Weib(c,\alpha)$ distributions $F$ of the form
 $$ F(\cdot) = \int \Weib(yc,\alpha)(\cdot) \, \Lambda(dy),  $$
  where $c > 0$.
 \item[(b)]
 $\mathfrak{W}_{\Lambda}(r,\alpha)$ for $r>1$ denotes the class of    $\Lambda$-mixtures of $r\perWeib(h,\alpha)$ distributions $F$ of the form
 $$ F(\cdot) = \int r\perWeib(yh,\alpha)(\cdot) \, \Lambda(dy), $$
 where $h \in \mathfrak{H}(r,\alpha)$.
\end{itemize}
\end{Def}

The reader should notice that $\mathfrak{W}_{\Lambda}(1,\alpha)$ is always a
subclass of $\mathfrak{W}_{\Lambda}(r,\alpha)$ for any $r>1$.

\begin{Theorem} \label{alpha-elementar-Darstellung}
Suppose \eqref{A1} and that $m(\alpha)=1$ and $\mathds{E}W^{(\alpha)}=1$ for some $\alpha>0$. 
Recall that $\Lambda_{\alpha}=\mathds{P}(W^{(\alpha)}\in\cdot)$. Then
\begin{equation*}
 \mathfrak{F}_{\wedge,b}^{\alpha} = \mathfrak{F}_{\wedge,r}^{\alpha}  = \mathfrak{F}_{\wedge,e}^{\alpha} = \mathfrak{W}_{\Lambda_{\alpha}}  (d,\alpha),
\end{equation*}
where $d=r>1$ in the $r$-geome\-tric case $(\G(T)=r^{\mathds{Z}})$
and $d=1$ in the continuous case $(\G(T)=\mathds{R}^{+})$.
\end{Theorem}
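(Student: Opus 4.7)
Given the chain $\mathfrak{F}_{\wedge,e}^{\alpha}\subseteq\mathfrak{F}_{\wedge,r}^{\alpha}\subseteq\mathfrak{F}_{\wedge,b}^{\alpha}$ supplied by the Remark following Definition \ref{fixptype}, it suffices to prove the two inclusions $\mathfrak{W}_{\Lambda_{\alpha}}(d,\alpha)\subseteq\mathfrak{F}_{\wedge,e}^{\alpha}$ (the easy direction, which shows that the Weibull mixtures are genuinely solutions and behave as claimed near $0$) and $\mathfrak{F}_{\wedge,b}^{\alpha}\subseteq\mathfrak{W}_{\Lambda_{\alpha}}(d,\alpha)$ (the nontrivial direction).

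For the first inclusion, let $F\in\mathfrak{W}_{\Lambda_{\alpha}}(d,\alpha)$ and rewrite its survival function as $\overline{F}(t)=\varphi_{\alpha}(h(t)t^{\alpha})$ with $h\in\mathfrak{H}(d,\alpha)$ (taking $h$ constant in the continuous case $d=1$). Under the hypotheses $m(\alpha)=1$ and $\mathds{E}W^{(\alpha)}=1$, Theorem \ref{Biggins} guarantees that $\varphi_{\alpha}$ satisfies the multiplicative functional equation $\varphi_{\alpha}(s)=\mathds{E}\prod_{i\ge 1}\varphi_{\alpha}(sT_{i}^{\alpha})$. Since $T_{i}\in\G(T)\cup\{0\}$ almost surely and $h$ is multiplicatively $d$-periodic, $h(tT_{i})=h(t)$ on $\{T_{i}>0\}$, and substituting $s=h(t)t^{\alpha}$ yields \eqref{MinFP2} after a direct computation. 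The $\alpha$-elementary property then follows from $1-\varphi_{\alpha}(s)\sim s$ as $s\downarrow 0$ (dominated convergence using $\mathds{E}W^{(\alpha)}=1$): along any sequence $t_{n}\downarrow 0$ in a fixed $\G(T)$-residue class, $h(t_{n})$ is constant, so $D_{\alpha}\overline{F}(t_{n})\to h(s)$.

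For the reverse inclusion, fix $F\in\mathfrak{F}_{\wedge,b}^{\alpha}$ and set
\[ M_{n}(t)\ \gl\ \prod_{|v|=n}\overline{F}(tL(v)),\qquad n\ge 0. \]
By \eqref{MinFPWBsf}, $(M_{n}(t))_{n\ge 0}$ is a bounded $(\mathcal{A}_{n})$-martingale taking values in $[0,1]$, hence converges almost surely and in $L^{1}$ to a limit $M_{\infty}(t)$ with $\mathds{E}M_{\infty}(t)=\overline{F}(t)$. The plan is to combine the pathwise renewal equation derived in Section \ref{DisintReq} --- after passing to logarithmic coordinates $s=-\log t$ in the additive self-similarity $-\log M_{\infty}(t)=\sum_{i\ge 1}[-\log M_{\infty}]_{i}(tT_{i})$ --- with the key renewal theorem (non-lattice if $\G(T)=\mathds{R}^{+}$, lattice if $\G(T)=r^{\mathds{Z}}$) to identify
\[ M_{\infty}(t)\ =\ \exp\bigl(-h(t)t^{\alpha}W^{(\alpha)}\bigr) \]
for a bounded, multiplicatively $\G(T)$-invariant $h$. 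The $\alpha$-boundedness hypothesis $1-\overline{F}(s)=O(s^{\alpha})$, via $-\log\overline{F}(s)\sim 1-\overline{F}(s)$, dominates $-\log M_{n}(t)$ by $Ct^{\alpha}W_{n}^{(\alpha)}$ for small $t$ and is exactly what affords the integrability needed to set up the renewal argument. Taking expectations delivers $\overline{F}(t)=\int e^{-yh(t)t^{\alpha}}\Lambda_{\alpha}(dy)$, and the required monotonicity of $t\mapsto h(t)t^{\alpha}$ is inherited from that of $1-\overline{F}$ through the strict monotonicity of $\varphi_{\alpha}$ (which uses $W^{(\alpha)}>0$ a.s.), placing $F$ in $\mathfrak{W}_{\Lambda_{\alpha}}(d,\alpha)$.

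The main obstacle is the identification step: martingale convergence alone delivers only some limit in $[0,1]$, and pinning down its explicit form requires the delicate translation of the multiplicative self-similarity into an additive pathwise renewal equation and a renewal-theoretic asymptotic analysis to extract both the factor $W^{(\alpha)}$ and the periodic/constant profile $h$. Both the lattice and non-lattice cases must be handled separately, and the $\G(T)$-invariance of $h$ must be \emph{derived} from the structure of the weighted branching process rather than assumed.
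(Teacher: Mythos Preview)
Your outline is essentially the paper's own proof: the easy inclusion is Lemma \ref{WrnualphainFinf}, and for the hard direction you correctly disintegrate via the multiplicative martingale $(\overline{\mathcal{F}}_n(t))_{n\ge 0}$, pass to $\Psi(t)=e^{-\alpha t}(-\log\overline{\mathcal{F}}(e^t))$ and the pathwise renewal equation \eqref{FPFRE}, use $\alpha$-boundedness to get the domination $\Psi\le CW^{(\alpha)}$ (Lemma \ref{OtalphaleqWalpha}), and then identify $\Psi$.

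Two small corrections. First, the identification tool is not the classical key renewal theorem (which would only give asymptotics as $t\to\pm\infty$) but the pathwise uniqueness result Theorem \ref{PRE}: under the integrability you have secured, it yields $\Psi(t)=p(t)W^{(\alpha)}$ \emph{for every} $t$, with $p$ constant or $(\log r)$-periodic according to whether $\Sigma_{\alpha,1}$ is nonarithmetic or $d$-arithmetic; this is what delivers the exact form $\overline{\mathcal{F}}(t)=\exp(-h(t)t^{\alpha}W^{(\alpha)})$ rather than merely its asymptotic shape. Second, the functional equation $\varphi_{\alpha}(s)=\mathds{E}\prod_i\varphi_{\alpha}(sT_i^{\alpha})$ comes from the a.s.\ branching identity $W^{(\alpha)}=\sum_i T_i^{\alpha}[W^{(\alpha)}]_i$, not from Theorem \ref{Biggins} (which is about nondegeneracy of the limit).
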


As to the proof of Theorem \ref{alpha-elementar-Darstellung}, let us note that,
once $m(\alpha)=1$ and $\mathds{E}W^{(\alpha)}=1$ have been verified,
the inclusion $ \mathfrak{W}_{\Lambda_{\alpha}}(d,\alpha)\subseteq
\mathfrak{F}_{\wedge,b}^{\alpha}$ follows upon direct inspection relying on the well known fact that $\Lambda_{\alpha}\in\mathfrak{F}_{\Sigma}(T^{(\alpha)})$, see Lemma \ref{WrnualphainFinf}.
So the nontrivial part is the reverse conclusion which will be shown in Section \ref{FRE}.

The reader should further notice that \eqref{A2} does not need to be assumed in Theorem \ref{alpha-elementar-Darstellung} because it already follows from \eqref{A1} and $m(\alpha)=1$.

\begin{Rem} \label{commentsthm}\rm
(a) The two previous theorems can be summarized as follows: The existence
of at least one $\alpha$-regular fixed point is equivalent to $\alpha$ being
the characteristic exponent of $T$ with $\mathds{E}W^{(\alpha)}=1$, and in this case \emph{all} regular solutions are in fact Weibull mixtures with
mixing distribution $\Lambda_{\alpha}$ and particularly $\alpha$-elementary.
Moreover, there are no further solutions in $\mathfrak{F}_{\wedge,b}^{\alpha} \backslash\mathfrak{F}_{\wedge,r}^{\alpha}$.

\vspace{.2cm}\noindent
(b) Let us briefly address two natural questions that arise in connection with our results. First, do further nontrivial solutions to \eqref{MinFP1} exist if
$m(\alpha)=1$ and $\mathds{E}W^{(\alpha)}=1$ for some $\alpha>0$? Clearly, any further $F\in\mathfrak{F}_{\wedge}$ must satisfy either
$$ \lim_{t\downarrow 0}D_{\alpha}\overline{F}(t)\ =\ \infty $$
or
$$  0\ \le\ \liminf_{t\downarrow 0}D_{\alpha}\overline{F}(t)\ <
\ \limsup_{t\downarrow 0}D_{\alpha}\overline{F}(t)\ =\ \infty. $$
Lemma \ref{liminflimsup} will show that only the
second alternative ($D_{\alpha}\overline{F}$ oscillating at 0) might be
possible. However, whether solutions of that kind really exist in certain instances remains an open question.

Second, one may wonder about the existence of solutions to \eqref{MinFP1} if $T$ does not possess a characteristic exponent. Although we cannot provide a general answer to this question, it will emerge from our discussion in Section \ref{cascade} that there are situations in which there is no characteristic exponent and yet $\mathfrak{F}_{\wedge}\ne\emptyset$. This was already observed by Jagers and R\"osler \cite{JR}, and the class of examples studied here forms a natural extension of theirs.

\vspace{.2cm}\noindent
(c) There is yet another situation, called the \emph{boundary case} by Biggins and Kyprianou \cite{BK3}, that we have deliberately excluded here from our analysis in order to not overburden the subsequent analysis.
It occurs when $m(\alpha)=1$ and $\mathfrak{F}_{\Sigma}(T^{(\alpha)})$ contains an element $\Lambda_{\alpha}^{*}$, say, with infinite mean for some $\alpha>0$. Then $W^{(\alpha)}=0$ a.s.\ and $m'(\alpha)
=0$ provided that $m(\cdot)$ exists in a neighborhood of $\alpha$.
This case is quite different from the one in focus here, where $\mathds
{E}W^{(\alpha)}=1$, except that $\mathfrak{W}_{\Lambda_{\alpha}^{*}}(d,\alpha)\subset\mathfrak{F}_{\wedge}$ with $d$ as in Theorem \ref{alpha-elementar-Darstellung} is easily verified by copying the proof of
Lemma \ref{WrnualphainFinf}.
If $\varphi_{\alpha}^{*}$ denotes the Laplace transform of $\Lambda_{\alpha}^{*}$, then, under mild conditions (cf.\ Theorem 5 of \cite{BK3}), $1-\varphi_{\alpha}^{*}(t)$ behaves like a constant times $t|\log t|$ as $t\downarrow 0$, and thus $\lim_{t\downarrow 0}D_{\alpha}\overline{F}(t)=\infty$ for any
$F\in\mathfrak{W}_{\Lambda_{\alpha}^{*}}(d,\alpha)$. We quote this different behavior as opposed to that in the situation of the results above to argue that the boundary case requires separate treatment. We refrain from going into further details and refer to a future publication.
\end{Rem}

\section{Disintegration and a pathwise renewal equation}\label{DisintReq}

Our further analysis is based on a disintegration of Eq.\ \eqref{MinFPWBsf} by which we mean
the derivation of a pathwise counterpart (Eq.\ \eqref{DisintegratedFPE} below) which reproduces
\eqref{MinFPWBsf} upon integration on both sides. We embark on the following known result on the sequence
\begin{equation} \label{multimart}
\overline{\mathcal{F}}_n(t)\ \gl\ \prod_{|v|=n} \overline{F}(t L(v)) ,\quad n \ge 0
\end{equation}
appearing under the expected value in \eqref{MinFPWBsf}.

\begin{Lemma} \label{Disintegration}
Let $F\in \mathfrak{F}_{\wedge}$. Then $(\overline{\mathcal{F}}_n(t))_{n \geq 0}$ forms a bounded nonnegative martingale with respect to $(\mathcal{F}_n)_{n \geq 0}$ and thus converges a.s. and in mean to a random variable $\overline{\mathcal{F}}(t)$ satisfying
$$ \mathds{E} \overline{\mathcal{F}}(t) = \overline{F}(t). $$
\end{Lemma}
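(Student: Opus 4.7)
The plan is to verify in turn the three claims: nonnegativity/boundedness, the martingale property, and the resulting a.s./$L^{1}$ convergence together with the expectation identity. (I assume the filtration denoted $\mathcal{F}_{n}$ is the one introduced in Section \ref{WBP} as $\mathcal{A}_{n}=\sigma(T(v):|v|<n)$; only this measurability is used below.) Nonnegativity and the bound $\overline{\mathcal{F}}_{n}(t)\le 1$ are immediate from $\overline{F}\in[0,1]$, and the $\mathcal{A}_{n+1}$-measurability of $\overline{\mathcal{F}}_{n}(t)$ is clear since $L(v)$ for $|v|=n$ depends only on the weights $T(u)$ with $|u|<n$.

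The heart of the argument is the martingale step. Decomposing each $v$ with $|v|=n+1$ as $v=uj$ with $|u|=n$ and $j\in\mathds{N}$, one has $L(uj)=L(u)\,T_{j}(u)$, and the vectors $(T(u))_{|u|=n}$ are i.i.d.\ copies of $T$ and independent of $\mathcal{A}_{n}$, while the $L(u)$ with $|u|=n$ are $\mathcal{A}_{n}$-measurable. Hence
\begin{align*}
\mathds{E}\bigl[\overline{\mathcal{F}}_{n+1}(t)\bigm|\mathcal{A}_{n}\bigr]
&=\mathds{E}\biggl[\prod_{|u|=n}\prod_{j\ge 1}\overline{F}\bigl(tL(u)T_{j}(u)\bigr)\biggm|\mathcal{A}_{n}\biggr]\\
&=\prod_{|u|=n}\mathds{E}\biggl[\prod_{j\ge 1}\overline{F}\bigl(sT_{j}\bigr)\biggr]\bigg|_{s=tL(u)}\\
&=\prod_{|u|=n}\overline{F}\bigl(tL(u)\bigr)\ =\ \overline{\mathcal{F}}_{n}(t),
\end{align*}
where the second equality uses conditional independence across $|u|=n$ together with the independence of $(T(u))_{|u|=n}$ from $\mathcal{A}_{n}$, and the third equality is precisely the fixed-point relation \eqref{MinFP2} applied at the argument $s=tL(u)$.

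Once the martingale property is established, the rest is routine: $(\overline{\mathcal{F}}_{n}(t))_{n\ge 0}$ is a $[0,1]$-valued martingale, hence uniformly integrable and convergent a.s.\ and in $L^{1}$ by Doob's convergence theorem to a limit $\overline{\mathcal{F}}(t)$. The $L^{1}$ convergence yields
$$
\mathds{E}\,\overline{\mathcal{F}}(t)\ =\ \lim_{n\to\infty}\mathds{E}\,\overline{\mathcal{F}}_{n}(t)\ =\ \mathds{E}\,\overline{\mathcal{F}}_{0}(t)\ =\ \overline{F}(t),
$$
since $\overline{\mathcal{F}}_{0}(t)=\overline{F}(t)$. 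No real obstacle arises; the only point that merits care is the correct use of independence when conditioning on $\mathcal{A}_{n}$, which in turn is exactly what makes the product over generation-$n$ vertices factor before invoking \eqref{MinFP2}.
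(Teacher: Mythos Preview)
Your proof is correct and complete. The paper does not actually prove this lemma; it simply cites Biggins and Kyprianou \cite{BK1}, Theorem 3.1. Your argument is precisely the standard direct verification one would expect that reference to contain: decompose generation $n+1$ over the parents in generation $n$, use that the families $T(u)$ for $|u|=n$ are i.i.d.\ and independent of $\mathcal{A}_{n}$ while the $L(u)$ are $\mathcal{A}_{n}$-measurable, and invoke \eqref{MinFP2} at the frozen argument $s=tL(u)$.

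One cosmetic slip: you write ``the $\mathcal{A}_{n+1}$-measurability of $\overline{\mathcal{F}}_{n}(t)$'', but the reason you give (that $L(v)$ for $|v|=n$ depends only on $T(u)$ with $|u|<n$) in fact yields $\mathcal{A}_{n}$-measurability, which is what adaptedness requires. This is clearly a typo rather than a gap, since the martingale identity you derive, $\mathds{E}[\overline{\mathcal{F}}_{n+1}(t)\mid\mathcal{A}_{n}]=\overline{\mathcal{F}}_{n}(t)$, already presupposes the correct measurability. Your identification of the filtration $\mathcal{F}_{n}$ in the lemma statement with $\mathcal{A}_{n}$ from Section~\ref{WBP} is also correct.
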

\begin{proof}
The proof of this lemma can be found in Biggins and Kyprianou (\cite{BK1}, Theorem 3.1).
\end{proof}

In the situation of Lemma \ref{Disintegration}, we put
$$ \mathcal{F}(t)\ \gl\ 1-\liminf_{n \to \infty} \overline{\mathcal{F}}_{n}(t).  $$
and call the stochastic process $\mathcal{F}=(\mathcal{F}(t))_{t \geq 0}$ a {\it disintegration of $F$} and also a {\it disintegrated fixed point}.
The announced pathwise fixed-point equation for an arbitrary disintegrated fixed point is next.

\begin{Lemma} \label{disintSFPE}
Let $F\in \mathfrak{F}_{\wedge}$ and $\mathcal{F}$ a disintegration of $F$. Then
\begin{equation} \label{DisintegratedFPE}
\overline{\mathcal{F}}(t)\ =\ \prod_{|v|=n} [\overline{\mathcal{F}}]_v(t L(v))\quad\text{ a.s.}
\end{equation}
for each $t \geq 0$ and $n \in \mathds{N}_0$.
\end{Lemma}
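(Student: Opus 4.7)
The plan is to establish a finite-level pathwise identity, pass to the limit $k\to\infty$ using a Fatou argument applied to the logarithm of the product, and then upgrade the resulting inequality to equality by comparing expectations.

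\textbf{Step 1 (Pathwise identity at finite level).} Each $u\in\V$ with $|u|=n+k$ factors uniquely as $u=vw$ with $|v|=n$ and $|w|=k$, and the multiplicativity of the branch weights along paths gives $L(vw)=L(v)\cdot[L(w)]_v$. Substituting this into the definition \eqref{multimart} yields the pathwise identity
\begin{equation*}
\overline{\mathcal{F}}_{n+k}(t)\ =\ \prod_{|v|=n}\prod_{|w|=k}\overline{F}\bigl(tL(v)\cdot[L(w)]_v\bigr)\ =\ \prod_{|v|=n}[\overline{\mathcal{F}}_k]_v\bigl(tL(v)\bigr)
\end{equation*}
for all $n,k\ge 0$ and $t\ge 0$.

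\textbf{Step 2 (Passage to the limit via Fatou).} Let $k\to\infty$. By Lemma \ref{Disintegration}, the left-hand side converges a.s.\ to $\overline{\mathcal{F}}(t)$. For the right-hand side, observe that for each $v$ with $|v|=n$ the process $([\overline{\mathcal{F}}_k]_v(s))_{k\ge 0}$ is, under the filtration generated by the weights in the subtree rooted at $v$, a bounded nonnegative martingale by Lemma \ref{Disintegration} applied to that subtree; hence $[\overline{\mathcal{F}}_k]_v(s)\to[\overline{\mathcal{F}}]_v(s)$ a.s.\ for each fixed $s\ge 0$. Since $L(v)$ is $\mathcal{A}_n$-measurable and independent of the subtree at $v$, a brief Fubini argument (condition on $\mathcal{A}_n$, where $tL(v)$ becomes a deterministic value) upgrades this to a.s.\ convergence at the random evaluation point $s=tL(v)$, and a countable union of null sets over $|v|=n$ yields simultaneous a.s.\ convergence. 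Writing $X_v^{(k)}:=-\log[\overline{\mathcal{F}}_k]_v(tL(v))\ge 0$ and $X_v:=-\log[\overline{\mathcal{F}}]_v(tL(v))\ge 0$, Step 1 reads $\sum_{|v|=n}X_v^{(k)}=-\log\overline{\mathcal{F}}_{n+k}(t)$. Fatou's lemma (applied to counting measure on $\{|v|=n\}$) now gives
\begin{equation*}
\sum_{|v|=n}X_v\ =\ \sum_{|v|=n}\lim_{k\to\infty}X_v^{(k)}\ \le\ \liminf_{k\to\infty}\sum_{|v|=n}X_v^{(k)}\ =\ -\log\overline{\mathcal{F}}(t)\quad\text{a.s.,}
\end{equation*}
which after exponentiating becomes
\begin{equation*}
\overline{\mathcal{F}}(t)\ \le\ \prod_{|v|=n}[\overline{\mathcal{F}}]_v(tL(v))\quad\text{a.s.}
\end{equation*}

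\textbf{Step 3 (Upgrade to equality by matching expectations).} By Lemma \ref{Disintegration}, $\mathds{E}\overline{\mathcal{F}}(t)=\overline{F}(t)$. For the right-hand side, given $\mathcal{A}_n$ the family $\{[\overline{\mathcal{F}}]_v\}_{|v|=n}$ is conditionally independent (the subtrees rooted at distinct vertices of generation $n$ carry disjoint i.i.d.\ copies of the weight ensemble) and $tL(v)$ is $\mathcal{A}_n$-measurable. Hence
\begin{equation*}
\mathds{E}\!\left[\prod_{|v|=n}[\overline{\mathcal{F}}]_v(tL(v))\,\bigg|\,\mathcal{A}_n\right]\ =\ \prod_{|v|=n}\mathds{E}\bigl[[\overline{\mathcal{F}}]_v(tL(v))\,\big|\,\mathcal{A}_n\bigr]\ =\ \prod_{|v|=n}\overline{F}(tL(v))\ =\ \overline{\mathcal{F}}_n(t),
\end{equation*}
using Lemma \ref{Disintegration} in the subtree at $v$ for the middle equality. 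Taking unconditional expectations and invoking Lemma \ref{Disintegration} once more shows that $\mathds{E}\prod_{|v|=n}[\overline{\mathcal{F}}]_v(tL(v))=\overline{F}(t)$. Since two integrable random variables satisfying $X\le Y$ a.s.\ and $\mathds{E}X=\mathds{E}Y$ must agree a.s., Step 2 now yields equality in \eqref{DisintegratedFPE}.

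The main obstacle is Step 2: under assumption \eqref{A1} only, there may be infinitely many vertices at level $n$ carrying positive weight, so the product is genuinely infinite and one cannot interchange limit and product by mere continuity. The Fatou-on-the-log device circumvents this, at the cost of producing only one inequality, which is precisely why Step 3 is needed to restore equality.
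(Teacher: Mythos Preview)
Your proof is correct and follows essentially the same route as the paper: establish the pathwise inequality $\overline{\mathcal{F}}(t)\le\prod_{|v|=n}[\overline{\mathcal{F}}]_v(tL(v))$ and then upgrade it to equality by showing both sides have expectation $\overline{F}(t)$ via conditional independence. The only technical difference is how the possibly infinite product at level $n$ is handled in Step~2: the paper truncates to $v\in\{1,\ldots,m\}^n$, passes to the limit in the resulting finite product, and then lets $m\to\infty$, whereas you take $-\log$ and apply Fatou's lemma with respect to counting measure---these are interchangeable devices yielding the same inequality.
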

\begin{proof}
We have
\begin{eqnarray*}
\overline{\mathcal{F}}(t)
& = &
\liminf_{k \to \infty} \prod_{|v|=n+k} \overline{F}(t  L(v)) \\
& = &
\liminf_{k \to \infty} \prod_{|v|=n} \prod_{|w|=k} \overline{F}(t  L(v)  [L(w)]_v) \\
& \leq &
\liminf_{k \to \infty} \prod_{v \in \{1,\ldots,m\}^n} [\overline{\mathcal{F}}_k]_v(t  L(v))\\
& = &
\prod_{v \in \{1,\ldots,m\}^n} [\overline{\mathcal{F}}]_v(t  L(v)) \\
& \underset{m \to \infty}{\longrightarrow} &
\prod_{|v|=n} [\overline{\mathcal{F}}]_v(t  L(v)) \qquad (t \geq 0).
\end{eqnarray*}
Taking expectations on both sides, this inequality becomes an equality since the $[\overline{\mathcal{F}}]_v(t  L(v))$, $|v|=n$, are conditionally independent given $(L(v))_{|v|=n}$ and have conditional expectation $\overline{F}(t  L(v))$ by Lemma \ref{Disintegration}. This gives the asserted result.
\end{proof}

Eq.\ \eqref{DisintegratedFPE} is of essential importance for our purposes. It can be transformed
into an additive one by taking logarithms and a change of the variables $t \mapsto e^t$. To this end, fix any $\alpha>0$ and define
$$ \Psi(t)\ \gl\ e^{-\alpha t} (- \log \overline{\mathcal{F}}(e^t)) $$ 
for $t \in \mathds{R}$. Put also $S(v) \gl -\log L(v)$ for $v \in \V$ with the usual convention $S(v) \gl \infty$ on $\{L(v) = 0\}$. Then, by \eqref{DisintegratedFPE}, 
\begin{eqnarray*}
\Psi(t)
& = &
e^{-\alpha t}\left(-\log \prod_{|v|=n} [\overline{\mathcal{F}}]_v(e^t L(v)) \right) \\
& = &
\sum_{|v|=n} e^{-\alpha t} \left(-\log [\overline{\mathcal{F}}]_v(e^t L(v)) \right) \\
& = &
\sum_{|v|=n} L(v)^{\alpha} e^{-\alpha (t-S(v))} \left(-\log [\overline{\mathcal{F}}]_v(e^{t-S(v)}) \right) \\
& = &
\sum_{|v|=n} L(v)^{\alpha} [\Psi]_v(t-S(v))\quad\text{ a.s.,}
\end{eqnarray*}
that is $\Psi$ satisfies the following \emph{pathwise renewal equation}:
\begin{equation} \label{FPFRE}
\Psi(t) = \sum_{|v|=n} L(v)^{\alpha} [\Psi]_v(t-S(v))\quad\text{ a.s.}
\end{equation}
for each $t \in \mathds{R}$. To explain the notion "pathwise renewal equation", we introduce a family of measures related to \eqref{FPFRE}, namely
\begin{equation*}
\Sigma_{\alpha,n}\ \gl\ \mathds{E} \sum_{|v|=n} L(v)^{\alpha} \delta_{S(v)},\quad
n \in \mathds{N}_0.
\end{equation*}
If $m(\alpha)=1$, then $\Sigma_{\alpha,n}$ is a probability distribution on $\mathds{R}$ and
$\Sigma_{\alpha,n} = \Sigma_{\alpha,1}^{*(n)}$ (the $n$-fold convolution of $\Sigma_{\alpha,1}$) 
for each $n\ge 0$, see \textit{e.g.}\ \cite{AM2}. In the following, we denote by $(\overline{S}_{\alpha,n})_{n \geq 0}$ a random walk with increment distribution $\Sigma_{\alpha,1}$ if $m(\alpha) = 1$. The renewal measure of $(\overline{S}_{\alpha,n})_{n \geq 0}$ shall be denoted by $U$, \textit{i.e.}, $U\gl \sum_{n \geq 0} \Sigma_{\alpha,1}^{*(n)}$. Then
\begin{equation} \label{RME}
U\ =\ \sum_{n \geq 0} \Sigma_{\alpha,1}^{*(n)}
 \ =\ \sum_{n \geq 0} \Sigma_{\alpha,n}
 \ =\ \mathds{E} \sum_{v \in \V} L(v)^{\alpha} \delta_{S(v)}
 \ =\ \mathds{E} \mathcal{U},
\end{equation}
where $\mathcal{U} \gl \sum_{v \in \V} L(v)^{\alpha} \delta_{S(v)}$ denotes the \emph{random weighted renewal measure} of the branching random walk $(S(v))_{v \in \V}$, for details see again \cite{AM2}.
The connection to the associated random walk has been used by various authors in the analysis of Eq.\ \eqref{SumFP1} or the branching random walk, see \textit{e.g.}\ \cite{Bi}, \cite{DL}, \cite{Ly},  or \cite{Ik}.

Now suppose $m(\alpha) \leq 1$ and define $\psi(t) \gl \mathds{E} \Psi(t)$ ($t \in \mathds{R}$). $\psi$ is well defined due to the fact that $\Psi(t) \geq 0$ a.s. for all $t \in \mathds{R}$. By taking expectations on both sides of Eq.\ \eqref{FPFRE}, we obtain
\begin{eqnarray*}
\psi(t)
& = &
\mathds{E} \sum_{|v|=n} L(v)^{\alpha} [\Psi]_v(t-S(v)) \\
& = &
\mathds{E} \left( \mathds{E} \left[ \sum_{|v|=n} L(v)^{\alpha} [\Psi]_v(t-S(v)) \Big| \mathcal{A}_n \right] \right) \\
& = &
\mathds{E} \sum_{|v|=n} L(v)^{\alpha} \psi(t-S(v)) \\
& = &
\int \psi(t-s) \, \Sigma_{\alpha,n}(ds),
\end{eqnarray*}
having utilized that $[\Psi]_v$ is independent of $\mathcal{A}_n$ for $|v|=n$. Consequently, $\psi$ satisfies
the renewal equation
\begin{equation} \label{FPIRE}
\psi(t)\ =\ \int \psi(t-s) \, \Sigma_{\alpha,n}(ds),\quad t\in\mathds{R}
\end{equation}
of which \eqref{FPFRE} is a disintegrated version. This provides the justification for the notion \emph{"pathwise renewal equation"}. While uniqueness results for renewal equations of the form \eqref{FPIRE} are commonly known, uniqueness results for processes solving a pathwise renewal equation are systematically studied in \cite{AM2}. The following result is cited from there:

\begin{Theorem}[cf.\ \cite{AM2}] \label{PRE}
Suppose that $\mathds{E} N > 1$, $\mathds{P}(T \in \{0,1\}^{\mathds{N}}) < 1$ and $m(\alpha)=1$ for some $\alpha>0$.
Let $\Psi: \mathds{R} \times \Omega \rightarrow [0,\infty]$ denote a $\mathfrak{B} \otimes \mathcal{A}_{\infty}$-measurable stochastic process which solves Eq.\ \eqref{FPFRE} for $n=1$. Then the following assertions hold true:
\begin{itemize}
 \item[(a)]
  Suppose $\Sigma_{\alpha,1}$ is nonarithmetic. \\
  If, at each $t \in\mathds{R}$, $\Psi$ is a.s.\ left continuous with right hand limit and locally 
  uniformly integrable and if $\sup_{t \in \mathds{R}} \mathds{E} \Psi(t) < \infty$,
  then $\Psi$ is a version of $cW^{(\alpha)}$ for some $c \ge 0$.
 \item[(b)]
  Suppose $\Sigma_{\alpha,1}$ is $d$-arithmetic ($d > 0$). \\
  If $\sup_{n \in \mathds{Z}} \mathds{E} \Psi(s+nd) < \infty$ for all $s \in [0,d)$,
  then there exists a $d$-periodic function $p:\mathds{R} \rightarrow \mathds[0,\infty)$ such that $\Psi$ is a version of $pW^{(\alpha)}$.
\end{itemize}
\end{Theorem}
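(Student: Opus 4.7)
The plan is to pass from the pathwise fixed-point identity \eqref{FPFRE} to a statement about the deterministic function $\psi(t)\gl\mathds{E}\Psi(t)$ via a Choquet--Deny argument, and then to lift the conclusion back to $\Psi$ itself through a conditional expectation argument that exploits the $\mathcal{A}_\infty$-measurability of $\Psi$.

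First I would take expectations in \eqref{FPFRE} for $n=1$. Since $[\Psi]_{(i)}$ is independent of $\mathcal{A}_1$ and distributed as $\Psi$, while $L$ and $S$ are $\mathcal{A}_1$-measurable, the Fubini-type computation already carried out just before \eqref{FPIRE} gives $\psi=\psi\ast\Sigma_{\alpha,1}$. The standing regularity hypotheses on $\Psi$ (left continuity with right-hand limits, local uniform integrability, and the appropriate supremum bound) transfer to $\psi$ via dominated convergence and in particular yield that $\psi$ is Borel measurable and bounded (in case (b), bounded on each coset $s+d\mathds{Z}$). Choquet--Deny on $\mathds{R}$ then applies: if $\Sigma_{\alpha,1}$ is nonarithmetic, the closed subgroup generated by its support is all of $\mathds{R}$ and $\psi\equiv c$ for some constant $c\ge 0$; if $\Sigma_{\alpha,1}$ is $d$-arithmetic, applying Choquet--Deny coset by coset forces $\psi$ to be $d$-periodic, and I set $p\gl\psi$.

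Next I would iterate the $n=1$ form of \eqref{FPFRE} --- using the left continuity of $\Psi$ to extend pointwise a.s.\ validity to simultaneous a.s.\ validity across a dense set of $t$'s --- to obtain \eqref{FPFRE} for every $n$, and then condition on $\mathcal{A}_n$. Because each $[\Psi]_v$ with $|v|=n$ is independent of $\mathcal{A}_n$ and distributed as $\Psi$, while $L(v)$ and $S(v)$ are $\mathcal{A}_n$-measurable, this yields
\[ \mathds{E}[\Psi(t)\mid\mathcal{A}_n]\;=\;\sum_{|v|=n}L(v)^{\alpha}\,\psi(t-S(v))\quad\text{a.s.} \]
In case (a) the right-hand side reduces to $cW_n^{(\alpha)}$ since $\psi\equiv c$. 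In case (b) the $d$-arithmeticity of $\Sigma_{\alpha,1}$ forces $S(v)\in d\mathds{Z}$ a.s.\ on $\{L(v)>0\}$, and the $d$-periodicity of $\psi$ collapses the right-hand side to $p(t)\,W_n^{(\alpha)}$.

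Finally I would let $n\to\infty$. Since $\Psi(t)$ is $\mathcal{A}_\infty$-measurable and integrable (its expectation $\psi(t)$ is finite by Step 1), Lévy's upward theorem gives $\mathds{E}[\Psi(t)\mid\mathcal{A}_n]\to\Psi(t)$ a.s., while $W_n^{(\alpha)}\to W^{(\alpha)}$ a.s.\ as recalled in Section \ref{WBP}. Combining, $\Psi(t)=cW^{(\alpha)}$ a.s.\ in case (a) and $\Psi(t)=p(t)W^{(\alpha)}$ a.s.\ in case (b), which is the asserted representation. The step I expect to be most delicate is the Choquet--Deny reduction: one must extract enough regularity of $\psi$ from the hypotheses on $\Psi$ to rule out pathological bounded $\Sigma_{\alpha,1}$-harmonic functions, and in case (b) one must justify the $d\mathds{Z}$-valuedness of the $S(v)$ on positivity events via the $d$-arithmetic support of $\Sigma_{\alpha,1}$ together with the multiplicative structure of the weighted branching walk.
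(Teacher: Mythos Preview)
The paper does not give a proof of this theorem; it is quoted from \cite{AM2} (see the sentence immediately preceding the statement: ``The following result is cited from there''), so there is nothing in the present paper to compare your argument against.

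That said, your outline is the natural route and essentially correct. The three-step scheme --- (i) take expectations to get the Choquet--Deny equation $\psi=\psi\ast\Sigma_{\alpha,1}$, (ii) identify $\psi$ as constant (resp.\ $d$-periodic), (iii) condition \eqref{FPFRE} on $\mathcal{A}_n$ and pass to the limit via L\'evy's upward theorem using $\mathcal{A}_\infty$-measurability of $\Psi(t)$ --- is precisely how such ``pathwise Choquet--Deny'' results are proved. Your justification that $S(v)\in d\mathds{Z}$ a.s.\ on $\{L(v)>0\}$ in case (b) is fine: since $\Sigma_{\alpha,1}=\mathds{E}\sum_i T_i^{\alpha}\delta_{-\log T_i}$ and the weights $T_i^{\alpha}$ are positive exactly when $T_i>0$, $d$-arithmeticity of $\Sigma_{\alpha,1}$ forces $-\log T_i\in d\mathds{Z}$ a.s.\ on $\{T_i>0\}$, and this propagates multiplicatively along the tree.

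The one place that deserves a closer look is exactly the point you flag yourself. In case (a) the classical Choquet--Deny theorem on $\mathds{R}$ is usually stated for bounded \emph{uniformly continuous} harmonic functions, whereas from the hypotheses you only get that $\psi$ is bounded and c\`agl\`ad (left continuity and existence of right limits transfer from $\Psi$ to $\psi$ via the local uniform integrability, as you say). One clean way around this is to avoid Choquet--Deny altogether and argue directly from the martingale: once you have $\mathds{E}[\Psi(t)\mid\mathcal{A}_n]=\sum_{|v|=n}L(v)^{\alpha}\psi(t-S(v))$ and know $\psi$ is bounded, Lemma~\ref{Transienz} and the local uniform integrability let you replace $\psi(t-S(v))$ by its limit along the branching random walk, which forces the constant (resp.\ periodic) form. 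Alternatively one can regularize $\psi$ by convolution and apply Choquet--Deny to the smoothed version. Either fix is routine, but you should make one of them explicit.
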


In order to utilize this theorem in the context of fixed-point equations, we need to check whether the additive transformation $\Psi$ of the disintegrated fixed-point $\mathcal{F}$ satisfies the assumptions of the theorem. Clearly, $\Psi$ is product measurable and standard arguments also show that it is a.s.\ left continuous with right hand limit at any $t\in\mathds{R}$. Applicability of Theorem \ref{PRE} therefore reduces
to a verification of the integrability conditions for $\Psi$. This is not always possible but works for the subclass of $\alpha$-bounded fixed points 
and forms the key ingredient to our proof of Theorem \ref{alpha-elementar-Darstellung} in Section \ref{FRE}.

\section{The characteristic exponent} \label{CharakteristischerExponent}

The purpose of this section is to provide some results related to the existence of the characteristic exponent $\alpha$ of $T$. Recall from Remark \ref{remcharexp} that a sufficient condition for this to be true is that $m(\alpha)=1$ and $\mathds{P}
(W^{(\alpha)}>0)>0$. Section \ref{cascade} is devoted to a class of examples
showing that solutions to our SFPE \eqref{MinFP1} may exist even if $T$
does not have a characteristic exponent. This will subsequently be used to point out some phenomena which may occur in the situation of Eq.\ \eqref{MinFP1} but not in the situation of its additive counterpart, \textit{i.e.}, Eq.\ \eqref{SumFP1}.

\subsection{Necessary conditions for the existence of the characteristic exponent}

\begin{Lemma} \label{Transienz}
If $m(\alpha) \leq 1$ for some $\alpha>0$, then
$$  R_{n} = \sup_{|v|=n} L(v) \longrightarrow 0\quad\text{ a.s.} \qquad (n \to \infty). $$
\end{Lemma}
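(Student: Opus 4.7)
My plan is to bound $R_n^{\alpha} \leq W_n^{(\alpha)} = \sum_{|v|=n} L(v)^{\alpha}$ and then analyse the almost-sure limit of the nonnegative supermartingale $(W_n^{(\alpha)})_{n \geq 0}$ introduced in Section \ref{WBP}; since $\mathds{E} W_n^{(\alpha)} = m(\alpha)^n \leq 1$, it converges almost surely to a finite limit $W^{(\alpha)}$.

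The easy case is $\mathds{E} W^{(\alpha)} = 0$, which includes $m(\alpha) < 1$ (via Fatou applied to $\mathds{E} W_n^{(\alpha)} = m(\alpha)^n \to 0$) and the degenerate subcase of $m(\alpha) = 1$. There $W^{(\alpha)} = 0$ a.s., hence $W_n^{(\alpha)} \to 0$ a.s., and the bound $R_n^{\alpha} \leq W_n^{(\alpha)}$ immediately yields $R_n \to 0$ a.s.

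The remaining situation is $\mathds{E} W^{(\alpha)} > 0$. By Theorem \ref{Biggins} this forces $m(\alpha) = 1$, $W^{(\alpha)} > 0$ a.s., and the random walk $(\overline{S}_{\alpha,n})_{n \geq 0}$ with step distribution $\Sigma_{\alpha,1}$ drifts to $+\infty$ with strictly positive mean (the latter being part of the Lyons--Biggins $L^1$-convergence criterion, cf.\ condition (d) in Theorem \ref{alpha-elementar-Charakterisierung}). Consequently the renewal measure $U = \sum_{n \geq 0} \Sigma_{\alpha,1}^{*(n)}$ is locally finite on $\mathds{R}$ by classical renewal theory. The many-to-one identity $\mathds{E} \sum_{|v|=n} L(v)^{\alpha} f(S(v)) = \mathds{E} f(\overline{S}_{\alpha,n})$ then gives, for any $\epsilon > 0$,
\begin{equation*}
\mathds{P}(R_n > \epsilon) \leq \mathds{E} \sum_{|v|=n} \mathbbm{1}_{\{L(v) > \epsilon\}} \leq \epsilon^{-\alpha}\, \mathds{P}(\overline{S}_{\alpha,n} < -\log\epsilon),
\end{equation*}
and summation over $n$ yields $\sum_{n \geq 0} \mathds{P}(R_n > \epsilon) \leq \epsilon^{-\alpha}\, U((-\infty, -\log\epsilon]) < \infty$. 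Borel--Cantelli together with letting $\epsilon$ run through a sequence decreasing to $0$ then completes the argument.

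The main obstacle is precisely this last case: the crude Markov bound $\mathds{P}(R_n > \epsilon) \leq \epsilon^{-\alpha} m(\alpha)^n$ is trivial when $m(\alpha) = 1$, so one must invoke the sharper Biggins--Lyons analysis to extract the positive drift of $(\overline{S}_{\alpha,n})$ and thereby the summability of the crossing probabilities needed for Borel--Cantelli.
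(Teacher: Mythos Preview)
Your treatment of the easy case ($\mathds{E}W^{(\alpha)}=0$) is identical to the paper's. The hard case, however, contains a gap. You deduce from Theorem~\ref{Biggins} that $(\overline{S}_{\alpha,n})_{n\ge 0}$ drifts to $+\infty$ and then assert that it has ``strictly positive mean'' and that therefore $U((-\infty,-\log\epsilon])<\infty$. Neither step is justified: condition (d) in Theorem~\ref{Biggins} only guarantees $\overline{S}_{\alpha,n}\to\infty$ a.s.\ together with an integral condition on $W_{1}^{(\alpha)}$; it does \emph{not} assert that $\mathds{E}\overline{S}_{\alpha,1}$ is finite (the paper deliberately avoids any differentiability assumption on $m$). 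More importantly, the only property of $U$ you have actually established is local finiteness on \emph{compact} intervals, which is what transience gives; the set $(-\infty,-\log\epsilon]$ is a half-line, and finiteness of $U$ there is a genuinely stronger statement that requires a separate argument (via the Wiener--Hopf factorisation one needs the defective descending ladder height to have finite mean, which is not automatic from drift to $+\infty$ alone). So the Borel--Cantelli step, as written, is not complete.

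The paper's proof circumvents this difficulty rather elegantly. It uses only the local finiteness of $U$ on compact intervals, transfers this via $\mathds{E}\mathcal{U}=U$ (Eq.~\eqref{RME}) to the \emph{random} weighted renewal measure $\mathcal{U}=\sum_{v\in\V}L(v)^{\alpha}\delta_{S(v)}$, and observes that $\mathcal{U}(I)<\infty$ a.s.\ for compact $I$ forces $\limsup_{n}R_{n}\in\{0,\infty\}$ a.s.\ (since $\limsup_{n}R_{n}=c\in(0,\infty)$ would produce infinitely many vertices $v$ with $S(v)$ in a fixed compact interval and $L(v)^{\alpha}$ bounded below). The supermartingale bound $R_{n}^{\alpha}\le W_{n}^{(\alpha)}\to W^{(\alpha)}<\infty$ then rules out $\infty$. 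Thus the paper never needs summability of $\mathds{P}(R_{n}>\epsilon)$ or control of $U$ on half-lines.
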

\begin{proof}
Since $R_{n} \le (W_n^{(\alpha)})^{1/\alpha}$ a.s.\ for each $n\ge 0$, there is nothing
to show if $W_n^{(\alpha)} \rightarrow 0$ a.s. Hence suppose that $W^{(\alpha)}$ is nondegenerate. In this case, $m(\alpha)=1$ and the associated random walk $(\overline{S}_{\alpha,n})_{n \geq 0}$ with increment distribution $\Sigma_{\alpha,1}$ converges a.s.\ to $\infty$ by Theorem \ref{Biggins}. A well known result from renewal theory
(cf.\ \textit{e.g.}\ \cite[p.\,200ff]{Fe}) further tells us that the transience of $(\overline{S}_{\alpha,n})_{n \geq 0}$ implies $U(I) < \infty$ for all compact intervals $I \subseteq \mathds{R}$, where $U$ denotes the renewal measure of $(\overline{S}_{\alpha,n})_{n \geq 0}$ (cf.\ Section \ref{WBP}). Since $\mathds{E} \mathcal{U} = U$ by Eq.\ \eqref{RME}, we infer $\mathcal{U}(I) < \infty$ a.s.\ for all compact intervals $I$ and therefore
$$ \mathds{P}\left(\limsup_{n \to \infty} R_{n} \in \{0,\infty\}\right) = 1. $$
Finally, use $R_{n} \leq (W_n^{(\alpha)})^{1/\alpha} \rightarrow (W^{(\alpha)})^{1/\alpha}
< \infty$ a.s.\ to conclude $R_{n}\to 0$ a.s.
\end{proof}

\begin{Lemma} \label{Fquer<1}
If the characteristic exponent $\alpha$ exists, then $\overline{F}(t) < 1$ for all $t > 0$
and $F \in \mathfrak{F}_{\wedge}$.
\end{Lemma}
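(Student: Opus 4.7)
The plan is to argue by contradiction, exploiting the iterated SFPE together with Lemma \ref{Transienz}. Suppose there exists $F\in\mathfrak{F}_{\wedge}$ and $t_{0}>0$ with $\overline{F}(t_{0})=1$. Since $\overline{F}$ is nonincreasing and left continuous, the quantity $c^{*}\gl\sup\{t\ge 0:\overline{F}(t)=1\}$ satisfies $c^{*}\ge t_{0}>0$ and $\overline{F}(c^{*})=1$, so any $X\stackrel{d}{=}F$ is $\ge c^{*}$ a.s.

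The central idea is to pass to the weighted branching model: for each $n$ one has $X\stackrel{d}{=}\inf_{|v|=n}X(v)/L(v)$ by \eqref{MinFPWB}, where $(X(v))_{v\in\V}$ are i.i.d.\ copies of $X$ independent of the WBP. Since $X(v)\ge c^{*}$ a.s.\ (with $X(v)/L(v)\gl\infty$ on $\{L(v)=0\}$), this gives the pathwise lower bound
\[
\inf_{|v|=n}\frac{X(v)}{L(v)}\ \ge\ \frac{c^{*}}{R_{n}}\quad\text{a.s.,}
\]
where $R_{n}=\sup_{|v|=n}L(v)$. Our standing assumption \eqref{A1} ensures a.s.\ survival of the underlying Galton-Watson tree, so $R_{n}>0$ a.s.\ and the bound is nontrivial.

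Now I would invoke Lemma \ref{Transienz}: since $m(\alpha)=1$ by definition of the characteristic exponent, $R_{n}\to 0$ a.s., hence $c^{*}/R_{n}\to\infty$ a.s.\ and therefore $\inf_{|v|=n}X(v)/L(v)\to\infty$ in probability. But each of these infima has distribution $F$, so this would force $\mathds{P}(X>K)=1$ for every $K>0$, contradicting the fact that $F$ is a probability distribution on $[0,\infty)$. There is no serious obstacle beyond this bookkeeping; everything is already in place once one chooses to chase the essential infimum of $X$ through the WBP using $R_{n}\downarrow 0$.
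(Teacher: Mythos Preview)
Your proof is correct and follows essentially the same route as the paper: both argue by contradiction, invoke Lemma \ref{Transienz} to get $R_{n}\to 0$ a.s., and push the iterated SFPE to conclude $\overline{F}\equiv 1$. The only cosmetic difference is that the paper works on the survival-function side, applying optional sampling to the bounded martingale $(\overline{\mathcal{F}}_{n}(t))_{n\ge 0}$ at the stopping time $\tau=\inf\{n:R_{n}\le t_{0}/t\}$, whereas you obtain the same conclusion by the direct random-variable bound $\inf_{|v|=n}X(v)/L(v)\ge c^{*}/R_{n}$; both amount to the same idea.
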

\begin{proof}
Suppose there exists a $t_0 > 0$ with $\overline{F}(t_0) = 1$. Pick any $t > t_0$ and let $\tau\gl\inf\{n \geq 0: R_{n} \leq t_0/t\}$. Then $\tau$ is an a.s.\ finite stopping time by Lemma \ref{Transienz}. Hence a combination of the optional sampling theorem applied to the bounded martingale $(\overline{\mathcal{F}}_{n}(t))_{n\ge 0}$ and Lemma \ref{Disintegration} yields
$$ \overline{F}(t)\ =\ \mathds{E}\overline{\mathcal{F}}_{\tau}(t)\ =
 \ \mathds{E} \prod_{|v|=\tau} \overline{F}(t L(v)) 
  \ \geq\ \mathds{E} \prod_{|v|=\tau} \overline{F}(t_0)\ =\ 1. $$
Since $t >t_0$ was chosen arbitrarily, we have $\overline{F}(t) = 1$ for all $t \geq 0$, which is clearly impossible for any proper distribution on $\mathds{R}^{+}$.
\end{proof}

\begin{Lemma} \label{WrnualphainFinf}
Suppose that $m(\alpha) = 1$ and $\mathds{E}W^{(\alpha)}=1$ for some $\alpha>0$. Then $\mathfrak{W}_{\Lambda_{\alpha}}(r,\alpha) \subseteq \mathfrak{F}_{\wedge,e}^{\alpha}$ in the $r$-geometric case ($r > 1$) and  $\mathfrak{W}_{\Lambda_{\alpha}}(1,\alpha) \subseteq \mathfrak{F}_{\wedge,e}
^{\alpha}$ in the continuous case.
\end{Lemma}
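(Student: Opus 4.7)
My plan is to exhibit any $F\in\mathfrak{W}_{\Lambda_{\alpha}}(r,\alpha)$ in a form that makes the fixed-point property a direct consequence of the smoothing equation satisfied by the Laplace transform $\varphi_{\alpha}$ of $W^{(\alpha)}$, and then to read off the $\alpha$-elementary limits from the fact that $\varphi_{\alpha}'(0-)=-\mathds{E}W^{(\alpha)}=-1$.

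First, note that for any $h\in\mathfrak{H}(r,\alpha)$ and any $F$ of the mixture form in Definition~\ref{Wrnualpha}(b), Fubini gives
$$\overline{F}(t)\ =\ \int e^{-y\,h(t)\,t^{\alpha}}\,\Lambda_{\alpha}(dy)\ =\ \varphi_{\alpha}\bigl(h(t)\,t^{\alpha}\bigr),\qquad t>0,$$
with the same identity in the continuous case for constant $h\equiv c$. Since $m(\alpha)=1$ and $\mathds{E}W^{(\alpha)}=1$, the standard branching argument shows that $W^{(\alpha)}\stackrel{d}{=}\sum_{i\ge 1}T_{i}^{\alpha}\,[W^{(\alpha)}]_{i}$ with i.i.d.\ copies $[W^{(\alpha)}]_{i}$ independent of $T$, which upon taking Laplace transforms yields the smoothing identity
$$\varphi_{\alpha}(s)\ =\ \mathds{E}\prod_{i\ge 1}\varphi_{\alpha}\bigl(s\,T_{i}^{\alpha}\bigr),\qquad s\ge 0.$$

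Next I would verify \eqref{MinFP2}. In the $r$-geometric case, since $\mathds{P}(T_{i}\in r^{\mathds{Z}}\cup\{0\}\text{ for all }i)=1$ and $h$ is multiplicatively $r$-periodic, we have $h(tT_{i})=h(t)$ a.s.\ on $\{T_{i}>0\}$, while on $\{T_{i}=0\}$ the factor $\overline{F}(tT_{i})=\overline{F}(0)=1$ contributes trivially. Hence
$$\mathds{E}\prod_{i\ge 1}\overline{F}(tT_{i})\ =\ \mathds{E}\prod_{i:\,T_{i}>0}\varphi_{\alpha}\bigl(h(t)\,t^{\alpha}\,T_{i}^{\alpha}\bigr)\ =\ \varphi_{\alpha}\bigl(h(t)\,t^{\alpha}\bigr)\ =\ \overline{F}(t),$$
where the middle equality is the smoothing identity applied at $s=h(t)t^{\alpha}$. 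The continuous case is the same argument with $h\equiv c$ (no periodicity needed). This shows $F\in\mathfrak{F}_{\wedge}$.

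Finally, for the $\alpha$-elementary property I exploit that $(1-\varphi_{\alpha}(s))/s\to\mathds{E}W^{(\alpha)}=1$ as $s\downarrow 0$ (dominated convergence applied to $(1-e^{-sW^{(\alpha)}})/s\le W^{(\alpha)}$). In the continuous case,
$$D_{\alpha}\overline{F}(t)\ =\ \frac{1-\varphi_{\alpha}(ct^{\alpha})}{t^{\alpha}}\ =\ c\cdot\frac{1-\varphi_{\alpha}(ct^{\alpha})}{ct^{\alpha}}\ \longrightarrow\ c\qquad(t\downarrow 0).$$
In the $r$-geometric case, for $s\in[1,r)$ I plug in $t=sr^{-n}$ and use $h(sr^{-n})=h(s)$ to get
$$D_{\alpha}\overline{F}(sr^{-n})\ =\ h(s)\cdot\frac{1-\varphi_{\alpha}\bigl(h(s)(sr^{-n})^{\alpha}\bigr)}{h(s)(sr^{-n})^{\alpha}}\ \longrightarrow\ h(s),$$
so the required limits exist with $h(s)>0$; this establishes $F\in\mathfrak{F}_{\wedge,e}^{\alpha}$. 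The only step that requires any real care is the periodicity cancellation $h(tT_{i})=h(t)$, which is why the result must be stated separately in the $r$-geometric and continuous cases; everything else is bookkeeping around the known smoothing identity for $\varphi_{\alpha}$.
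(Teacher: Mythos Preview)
Your proof is correct and follows essentially the same route as the paper: write $\overline{F}(t)=\varphi_{\alpha}(h(t)t^{\alpha})$, use the $r$-periodicity of $h$ together with $T_{i}\in r^{\mathds{Z}}\cup\{0\}$ to replace $h(tT_{i})$ by $h(t)$, and then invoke the smoothing identity $\varphi_{\alpha}(s)=\mathds{E}\prod_{i}\varphi_{\alpha}(sT_{i}^{\alpha})$ (equivalently, $\Lambda_{\alpha}\in\mathfrak{F}_{\Sigma}(T^{(\alpha)})$). Your explicit verification of the $\alpha$-elementary limits via $(1-\varphi_{\alpha}(s))/s\to\mathds{E}W^{(\alpha)}=1$ is a welcome addition, as the paper only remarks that this is ``easily checked.''
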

\begin{proof}
In what follows we restrict ourselves to the $r$-geometric case as the continuous case is similar but easier. So let $h \in \mathfrak{H}(r,\alpha)$ and then $F$ as in Definition \ref{Wrnualpha}(a) with $\Lambda=\Lambda_{\alpha}$. Recall that $\varphi_{\alpha}$ denotes the Laplace transform of $W^{(\alpha)}$.
Then $\overline{F}(t) = \varphi_{\alpha}(h(t)t^{\alpha})$ ($t \geq 0$). Since $h$ is multiplicatively $r$-periodic and all positive $T_i$ take values in $r^{\mathds{Z}}$ a.s., we infer
\begin{eqnarray*}
\mathds{E} \prod_{i \geq 1} \overline{F}(t T_i)
& = &
\mathds{E} \prod_{i \geq 1} \varphi_{\alpha}(h(tT_i) (tT_i)^{\alpha}) \\
& = &
\mathds{E} \prod_{i \geq 1} \varphi_{\alpha}(h(t) t^{\alpha} T_i^{\alpha})  \\
& = & 
\varphi_{\alpha}(h(t) t^{\alpha})\ =\  \overline{F}(t),
\end{eqnarray*}
having used that $\Lambda_{\alpha} \in \mathfrak{F}_{\Sigma}(T^{(\alpha)})$, which in turn follows from the equation $W^{(\alpha)} = \sum_{i \geq 1} T_i^{\alpha} [W^{(\alpha)}]_i$ a.s. Thus, $\overline{F}$ solves Eq.\ \eqref{MinFP2}, \textit{i.e.}, $F \in \mathfrak{F}_{\wedge}$. One can easily check that $F$ is always $\alpha$-elementary.
\end{proof}

\begin{Lemma} \label{OtalphaleqWalpha}
Let $F\in \mathfrak{F}_{\wedge,b}^{\alpha}$, $\mathcal{F}=(\mathcal{F}(t))_{t\ge 0}$ be its disintegration and $\alpha > 0$ be such that $m(\alpha) \leq 1$. Then there exists a positive constant $C>0$ such that 
$$ \mathds{P} \left(-\log \overline{\mathcal{F}}(t) \leq Ct^{\alpha} W^{(\alpha)}
  \text{ for all } t \geq 0 \right)\ =\ 1. $$
\end{Lemma}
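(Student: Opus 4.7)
The plan is to turn the $\alpha$-boundedness hypothesis into a near-zero exponential bound on $\overline{F}$ and then propagate this estimate multiplicatively through the representation $\overline{\mathcal{F}}_n(t) = \prod_{|v|=n}\overline{F}(tL(v))$, using Lemma \ref{Transienz} to guarantee that $tL(v)$ sits in the "good" region for every $|v|=n$ once $n$ is large. Concretely, $F\in\mathfrak{F}_{\wedge,b}^{\alpha}$ supplies $K<\infty$ and $t_{0}>0$ with $1-\overline{F}(s)\le (K+1)s^{\alpha}$ on $[0,t_{0}]$; shrinking $t_{0}$ if necessary, the continuity of $F$ at $0$ from Lemma \ref{wlog} allows us to also assume $1-\overline{F}(s)\le 1/2$ there. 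The elementary inequality $-\log(1-x)\le 2x$ on $[0,1/2]$ then yields $-\log\overline{F}(s)\le Cs^{\alpha}$ for $s\in[0,t_{0}]$ with $C\gl 2(K+1)$.

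Next I would fix, once and for all, an almost sure event $\Omega_{0}$ on which simultaneously $R_{n}\to 0$ (Lemma \ref{Transienz}) and $W_{n}^{(\alpha)}\to W^{(\alpha)}<\infty$ (the nonnegative supermartingale convergence guaranteed under $m(\alpha)\le 1$). On $\Omega_{0}$, for any $t>0$ there is $n_{0}=n_{0}(\omega,t)$ such that $tR_{n}\le t_{0}$ for all $n\ge n_{0}$. For those $n$, $tL(v)\le t_{0}$ holds for every $v$ with $|v|=n$, so that
$$-\log\overline{\mathcal{F}}_{n}(t)\ =\ \sum_{|v|=n}-\log\overline{F}(tL(v))\ \le\ Ct^{\alpha}\sum_{|v|=n}L(v)^{\alpha}\ =\ Ct^{\alpha}W_{n}^{(\alpha)}.$$
Rewriting this as $\overline{\mathcal{F}}_{n}(t)\ge\exp(-Ct^{\alpha}W_{n}^{(\alpha)})$ and passing to the liminf in $n$, the identity $\overline{\mathcal{F}}(t)=\liminf_{n}\overline{\mathcal{F}}_{n}(t)$ from the definition of the disintegration, combined with $W_{n}^{(\alpha)}\to W^{(\alpha)}$ on $\Omega_{0}$, yields $\overline{\mathcal{F}}(t)\ge\exp(-Ct^{\alpha}W^{(\alpha)})$, i.e., $-\log\overline{\mathcal{F}}(t)\le Ct^{\alpha}W^{(\alpha)}$.

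The crucial observation is that $\Omega_{0}$ does not depend on $t$, so on $\Omega_{0}$ the inequality is valid simultaneously for all $t>0$; the case $t=0$ is trivial since $\overline{F}(0)=1$ (Lemma \ref{wlog}) gives $\overline{\mathcal{F}}_{n}(0)=1$ for every $n$. I do not anticipate any genuine obstacle in this argument. The only bookkeeping point worth flagging, which is the reason the statement is phrased as "for all $t\ge 0$ almost surely" rather than "for each $t\ge 0$ almost surely", is that one must choose the exceptional null set coming from Lemma \ref{Transienz} and the supermartingale convergence \emph{before} quantifying over $t$, so that the pathwise conclusion holds uniformly in $t$ on a common full-measure set.
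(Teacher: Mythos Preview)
Your proof is correct and follows essentially the same route as the paper's: convert $\alpha$-boundedness into a local bound $-\log\overline{F}(s)\le Cs^{\alpha}$ via $-\log(1-x)\le 2x$, use Lemma~\ref{Transienz} to push all arguments $tL(v)$ into the good region for large $n$, sum over $|v|=n$, and pass to the limit using $W_n^{(\alpha)}\to W^{(\alpha)}$. Your explicit care in fixing the full-measure set $\Omega_0$ before quantifying over $t$ is a welcome clarification of a point the paper leaves implicit.
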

\begin{proof}
By assumption, $1-\overline{F}(t) \leq Ct^{\alpha}/2$ for all sufficiently small $t \geq 0$ and some $C>0$. Using this and $-\log x \leq 2(1-x)$ for all $x$ in a suitable left neighbourhood of $1$, we infer
$$ -\log \overline{F}(t) \leq 2(1-\overline{F}(t)) \leq Ct^{\alpha} $$
for all $0 \leq t \leq \delta$ with $\delta > 0$ sufficiently small. By Lemma \ref{Transienz} in the following
section, $m(\alpha)\le 1$ ensures $\mathds{P}(B)=1$ for $B\gl\{\sup_{|v|=n} L(v) \rightarrow 0\}$. Fixing any $t > 0$, we have $t L(v) \leq \delta$ on $B$ for all $|v| \geq n$ and some sufficiently large $n$. Hence
$-\log \overline{F}(t L(v)) \leq Ct^{\alpha}L(v)^{\alpha}$  on $B$ for all $|v| \geq n$ which in turn implies
\begin{eqnarray*}
-\log \overline{\mathcal{F}}(t)
& = &
-\log \liminf_{n \to \infty} \prod_{|v|=n} \overline{F}(t L(v)) \\
& = &
\limsup_{n \to \infty} \sum_{|v|=n} -\log \overline{F}(t L(v)) \\
& \leq &
\limsup_{n \to \infty} Ct^{\alpha} \sum_{|v|=n} L(v)^{\alpha} \\
& = &
2Ct^{\alpha} W^{(\alpha)}\quad\text{a.s.}
\end{eqnarray*}
on the almost certain event $B$.
\end{proof}

\begin{Rem} \label{OtalphaleqWalphaRem} \rm
(a) Lemma \ref{OtalphaleqWalpha} allows the following obvious modification: Suppose $m(\alpha) \leq 1$ for some $\alpha>0$ and $\G(T)=r^{\mathds{Z}}$ for some $r>1$. Let $F\in \mathfrak{F}_{\wedge}$ be such that $1-\overline{F}(sr^{-n}) \leq C(sr^{-n})^{\alpha}$ for some $s,C > 0$ and all sufficiently large $n \in\mathds{N}$. Then $-\log \overline{\mathcal{F}}(sr^n) \leq 2C(sr^n)^{\alpha} W^{(\alpha)}$ a.s.\ for all $n \in \mathds{Z}$, where $\mathcal{F}$ denotes the disintegration of $F$.

(b) If $W^{(\alpha)}=0$ a.s., which is always true if $m(\alpha)<1$ and may
be true if $m(\alpha)=1$, then the assertion of Lemma
\ref{OtalphaleqWalpha} becomes $\overline{\mathcal{F}}(t)=1$ a.s.\ for all
$t\ge 0$ which implies $\overline{F}(t)=1$ for all $t\ge 0$ which is clearly impossible. Hence Lemma \ref{OtalphaleqWalpha} is really about
the case when $m(\alpha)=1$ and $\mathds{E}W^{(\alpha)}=1$.
\end{Rem}

\begin{Lemma} \label{liminflimsup}
Suppose that $m(\alpha)=1$ and $\mathds{E}W^{(\alpha)}=1$ for some $\alpha>0$.
Then the following assertions hold true for any $F\in\mathfrak{F}_{\wedge}$:
\begin{itemize}
 \item[(a)] $\limsup_{t \downarrow 0} t^{-\alpha}(1-\overline{F}(t)) > 0$.
 \item[(b)] $\liminf_{t \downarrow 0} t^{-\alpha}(1-\overline{F}(t)) < \infty$.
\end{itemize}
\end{Lemma}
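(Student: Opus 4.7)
Both statements are proved by contradiction, and both rely on the following facts that have already been established: under \eqref{A1}, $m(\alpha)=1$ and $\mathds{E} W^{(\alpha)}=1$, $\alpha$ is the characteristic exponent of $T$ by Remark \ref{remcharexp}, so $W^{(\alpha)}>0$ a.s.\ and the Laplace transform $\varphi_{\alpha}$ of $W^{(\alpha)}$ satisfies $\varphi_{\alpha}(0)=1$ and $\varphi_{\alpha}(u)\to 0$ as $u\to\infty$; moreover, $R_{n}\gl\sup_{|v|=n}L(v)\to 0$ a.s.\ by Lemma \ref{Transienz}.

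For (a), I will suppose $\limsup_{t\downarrow 0}t^{-\alpha}(1-\overline{F}(t))=0$. Then for every $\varepsilon>0$ there is $\delta_{\varepsilon}>0$ with $1-\overline{F}(t)\le\varepsilon t^{\alpha}$, hence $-\log\overline{F}(t)\le 2\varepsilon t^{\alpha}$, on $(0,\delta_{\varepsilon}]$. Repeating the argument of Lemma \ref{OtalphaleqWalpha} verbatim with $C$ replaced by $2\varepsilon$ then yields $-\log\overline{\mathcal{F}}(t)\le 2\varepsilon t^{\alpha}W^{(\alpha)}$ a.s.\ for every $t\ge 0$, and hence $\overline{\mathcal{F}}(t)\ge\exp(-2\varepsilon t^{\alpha}W^{(\alpha)})$ a.s. Taking expectations and invoking the identity $\mathds{E}\overline{\mathcal{F}}(t)=\overline{F}(t)$ of Lemma \ref{Disintegration} gives $\overline{F}(t)\ge\varphi_{\alpha}(2\varepsilon t^{\alpha})$, and letting $\varepsilon\downarrow 0$ produces $\overline{F}(t)\ge 1$ for every $t>0$, contradicting Lemma \ref{Fquer<1}.

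For (b), I will suppose $\liminf_{t\downarrow 0}t^{-\alpha}(1-\overline{F}(t))=\infty$, so that for every $K>0$ there is $\delta_{K}>0$ with $\overline{F}(t)\le 1-Kt^{\alpha}\le e^{-Kt^{\alpha}}$ on $(0,\delta_{K}]$. I will fix $t>0$, set $B_{n}\gl\{R_{n}\le\delta_{K}/t\}$, and apply the $n$-fold iterated fixed-point equation \eqref{MinFPWBsf}, bounding the product by $1$ off $B_{n}$ and by $\exp(-Kt^{\alpha}W_{n}^{(\alpha)})$ on $B_{n}$. This delivers
$$\overline{F}(t)\ \le\ \mathds{P}(B_{n}^{c})+\mathds{E}\bigl[e^{-Kt^{\alpha}W_{n}^{(\alpha)}}\mathds{1}_{B_{n}}\bigr].$$
Since $R_{n}\to 0$ and $W_{n}^{(\alpha)}\to W^{(\alpha)}$ a.s., bounded convergence will yield $\overline{F}(t)\le\varphi_{\alpha}(Kt^{\alpha})$ upon sending $n\to\infty$. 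Letting $K\to\infty$ and using $W^{(\alpha)}>0$ a.s.\ to ensure $\varphi_{\alpha}(Kt^{\alpha})\to 0$ then forces $\overline{F}(t)=0$ for every $t>0$, i.e.\ $F=\delta_{0}$, in contradiction with $F\in\mathfrak{F}_{\wedge}$.

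Neither part of the argument looks delicate; the main points of care are the quantitative refinement of Lemma \ref{OtalphaleqWalpha} with a parameter $\varepsilon\downarrow 0$ in (a), and the justification of the passage to the limit in the split over $B_{n}$ in (b) via the almost-sure vanishing of $R_{n}$ (Lemma \ref{Transienz}) together with $W_{n}^{(\alpha)}\to W^{(\alpha)}$ a.s.
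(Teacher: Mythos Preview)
Your proof is correct and follows essentially the same approach as the paper. Part (a) is identical in spirit: both you and the paper push the constant in Lemma \ref{OtalphaleqWalpha} to zero to force $\overline{\mathcal{F}}(t)=1$ a.s., contradicting Lemma \ref{Fquer<1}. In part (b) the paper packages the same comparison via the explicit family $F_{s}(t)=1-\varphi_{\alpha}((t/s)^{\alpha})$ and an optional sampling argument at the stopping time $\tau=\inf\{n:tR_{n}<\varepsilon(s)\}$, whereas you bound directly by $e^{-Kt^{\alpha}}$ and pass to the limit in $n$ via bounded convergence; the two arguments are equivalent, yours being marginally more elementary since it bypasses the appeal to Lemma \ref{WrnualphainFinf} and the optional sampling theorem.
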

\begin{proof}
(a) Suppose that $\lim_{t \downarrow 0} t^{-\alpha}(1-\overline{F}(t)) = 0$. Then
$F$ clearly satisfies the crucial assumption of Lemma \ref{OtalphaleqWalpha} 
for \emph{every} $C>0$, and we conclude for its disintegration $\mathcal{F}$ that
$\mathds{P}(-\log \overline{\mathcal{F}}(t) \leq 2 C t^{\alpha} W^{(\alpha)}
\text{ for all }t \geq 0)=1$ for every $C>0$. As a consequence, $-\log \overline{\mathcal{F}}(1) = 0$ a.s. which in turn implies $\overline{F}(1)=\mathds{E}\overline{\mathcal{F}}(1) = 1$. But this contradicts Lemma \ref{Fquer<1} and we conclude that 
$\limsup_{t \downarrow 0} t^{-\alpha}(1-\overline{F}(t)) > 0$ as claimed.

\vspace{.2cm}
(b) Suppose that $\liminf_{t \downarrow 0} t^{-\alpha}(1-\overline{F}(t)) = \infty$. 
This time we will produce a contradiction by comparison of $F(t)$ with the class
$F_{s}(t)\gl 1-\varphi_{\alpha}((t/s)^{\alpha})$, $s>0$, of solutions to \eqref{MinFP1}
(see Lemma \ref{WrnualphainFinf}). Since $\lim_{t \downarrow 0} t^{-\alpha}
(1-\overline{F}_{s}(t))=s^{-\alpha}\mathds{E}W^{(\alpha)}=s^{-\alpha}$ for any $s>0$, we infer $\overline{F}(t)\le\overline{F}_{s}(t)$ for any $s>0$ and $0<t<\varepsilon(s)$
with $\varepsilon(s)$ sufficiently small. Now fix any $t>0$ and consider the bounded
martingales $(\overline{\mathcal{F}}_{n}(t))_{n\ge 0}$ and 
$(\overline{\mathcal{F}}_{s,n}(t))_{n\ge 0}$
defined by \eqref{multimart} for $F$ and $F_{s}$, respectively. By Lemma \ref{Transienz},
the stopping time $\tau(s)\gl\inf\{n:tR_{n}<\varepsilon(s)\}$ is a.s.\ finite and
$$ \overline{\mathcal{F}}_{\tau}(t)\ =\ \prod_{|v|=\tau}\overline{F}(tL(v))\ \le\ 
\prod_{|v|=\tau}\overline{F}_{s}(tL(v))\ =\ \overline{\mathcal{F}}_{s,\tau}(t) $$
for any $s>0$. Therefore, by an appeal to the optional sampling theorem,
\begin{equation*}
\overline{F}(t)\ =\ \mathds{E}\overline{\mathcal{F}}_{\tau}(t)\ \le 
\ \mathds{E}\overline{\mathcal{F}}_{s,\tau}(t)\ =\ \overline{F}_{s}(t)
\end{equation*}
for any $s>0$. Finally, use $\overline{F}_{s}(t)=\varphi_{\alpha}((t/s)^{\alpha})\to
\varphi_{\alpha}(\infty)=\mathds{P}(W^{(\alpha)}=0)=0$ as $s\downarrow 0$ to
infer $\overline{F}(t)=0$ and thereupon the contradiction $F=\delta_{0}$ since $t>0$
was chosen arbitrarily.
\end{proof}

\subsection{A sufficient condition for the existence of the characteristic exponent}

\begin{Lemma} \label{Iksanowtrick}
Let $\alpha>0$ and $\mathfrak{F}_{\wedge,r}^{\alpha}\ne\emptyset$.
Then $m(\alpha) \leq 1$.
\end{Lemma}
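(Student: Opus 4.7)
Fix $F \in \mathfrak{F}_{\wedge,r}^{\alpha}$ and set $r(t) := (1-\overline{F}(t))/t^{\alpha}$, so that $\ell := \liminf_{t \downarrow 0} r(t) \in (0,\infty)$ and $L := \limsup_{t \downarrow 0} r(t) < \infty$ by $\alpha$-regularity. A short verification shows that $K := \sup_{t>0} r(t) < \infty$ ($r$ is bounded near $0$ by $\alpha$-regularity and $r(t) \le t^{-\alpha}$ elsewhere). The plan is to divide the SFPE \eqref{MinFP2} by $t^{\alpha}$, take $\liminf_{t \downarrow 0}$ of both sides, and apply Fatou's lemma to the right, extracting a factor $m(\alpha)$ to arrive at $\ell \ge \ell\, m(\alpha)$.

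For the pathwise analysis of the right-hand side, write $S(t) := \sum_{i}(1-\overline{F}(tT_{i}))$ and $\tilde S(t) := -\sum_{i} \log \overline{F}(tT_{i})$, so that $1 - \prod_{i}\overline{F}(tT_{i}) = 1 - e^{-\tilde S(t)}$. On $\{\sum_{i} T_{i}^{\alpha} < \infty\}$, the bound $1-\overline{F}(s) \le Ks^{\alpha}$ yields $S(t) \le Kt^{\alpha} \sum_{i} T_{i}^{\alpha} \to 0$ as $t \downarrow 0$; then also $\tilde S(t) \to 0$, $(1-e^{-\tilde S(t)})/\tilde S(t) \to 1$, and $\tilde S(t)/t^{\alpha} \sim S(t)/t^{\alpha} = \sum_{i} T_{i}^{\alpha}\, r(tT_{i})$ (via $-\log x \sim 1-x$ as $x \uparrow 1$). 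Since $\liminf_{t \downarrow 0} r(tT_{i}) = \ell$ for every $i$ with $T_{i} > 0$, Fatou's lemma for the counting measure then gives
\begin{equation*}
\liminf_{t \downarrow 0} \frac{1 - \prod_{i}\overline{F}(tT_{i})}{t^{\alpha}}\ \ge\ \ell \sum_{i} T_{i}^{\alpha}\quad \text{a.s.\ on\ } \Bigl\{\textstyle\sum_{i} T_{i}^{\alpha} < \infty\Bigr\}.
\end{equation*}

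The main obstacle is the complementary event $A := \{\sum_{i} T_{i}^{\alpha} = \infty\}$, where one has to argue that the pathwise $\liminf$ is actually $+\infty$. Choose $\epsilon \in (0,\ell)$ and $\delta > 0$ with $1-\overline{F}(s) \ge (\ell-\epsilon)s^{\alpha}$ on $(0,\delta]$ and set $c_{0} := -\log \overline{F}(\delta)$, which is strictly positive since $\overline{F}(\delta) \le 1-(\ell-\epsilon)\delta^{\alpha} < 1$. Using $-\log x \ge 1-x$,
\begin{equation*}
\tilde S(t)\ \ge\ (\ell-\epsilon)\, t^{\alpha} \sum_{i:\, 0 < T_{i} \le \delta/t} T_{i}^{\alpha}\ +\ c_{0}\, \#\{i: T_{i} > \delta/t\},
\end{equation*}
and on $A$ at least one summand is $+\infty$ (finiteness of each $T_{i}$ forces the second set of indices to be infinite whenever the first sum is finite). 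Hence $1-\prod_{i}\overline{F}(tT_{i}) = 1$ on $A$ for every $t > 0$, and the pathwise $\liminf$ there equals $+\infty$. Combining both cases via Fatou in expectation,
\begin{equation*}
\ell\ =\ \liminf_{t \downarrow 0} r(t)\ \ge\ \mathds{E}\liminf_{t \downarrow 0} \frac{1-\prod_{i}\overline{F}(tT_{i})}{t^{\alpha}}\ \ge\ \ell\, \mathds{E}\Bigl[\mathbbm{1}_{\{\sum_{i} T_{i}^{\alpha} < \infty\}} \sum_{i} T_{i}^{\alpha}\Bigr]\ +\ \infty \cdot \mathds{P}(A).
\end{equation*}
Finiteness of $\ell$ forces $\mathds{P}(A) = 0$, and the remainder reduces to $\ell \ge \ell\, m(\alpha)$. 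Since $\ell > 0$, we conclude $m(\alpha) \le 1$.
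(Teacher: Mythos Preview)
Your proof is correct, and it follows a genuinely different route from the paper's. The paper starts from the telescoping identity
\[
1\ =\ \mathds{E}\sum_{i=1}^{N} T_i^{\alpha}\,\frac{D_{\alpha}\overline{F}(tT_i)}{D_{\alpha}\overline{F}(t)}\prod_{j<i}\overline{F}(tT_j),
\]
applies Fatou's lemma as $t\downarrow 0$ to obtain only $1\ge (c_1/c_2)\,m(\alpha)$ with $c_1=\liminf D_{\alpha}\overline{F}$ and $c_2=\limsup D_{\alpha}\overline{F}$, and then removes the spurious ratio by repeating the argument for the $n$-fold iterated equation, yielding $1\ge (c_1/c_2)\,m(\alpha)^n$ for all $n$ and hence $m(\alpha)\le 1$. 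You instead divide the fixed-point equation by $t^{\alpha}$, take the $\liminf$ directly, and carry out a pathwise asymptotic analysis of $(1-\prod_i\overline{F}(tT_i))/t^{\alpha}$, splitting on whether $\sum_i T_i^{\alpha}$ is finite. This avoids the iteration trick entirely and, as a by-product, shows $\mathds{P}(\sum_i T_i^{\alpha}=\infty)=0$. The paper's argument is shorter and more algebraic; yours is more self-contained but requires more care in justifying $\tilde S(t)\sim S(t)$ term by term (here it is worth noting explicitly that on $\{\sum_i T_i^{\alpha}<\infty\}$ one has $\sup_i T_i<\infty$, so for small $t$ all $tT_i$ lie in a neighbourhood of $0$ where $-\log\overline{F}(s)-(1-\overline{F}(s))=O((1-\overline{F}(s))^2)$ applies uniformly).
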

\begin{proof}
Let $F\in\mathfrak{F}_{\wedge,r}^{\alpha}$ and recall that $D_{\alpha}\overline{F}(t)\gl t^{-\alpha}(1-\overline{F}(t))$ for $t > 0$, thus $c_{1}\gl\liminf_{t \downarrow 0}D_{\alpha}\overline{F}(t)>0$ and $c_{2}\gl\limsup_{t \downarrow 0}D_{\alpha}\overline{F}(t)<\infty$. It follows from Eq.\ \eqref{MinFP2} that (see equation (10) in \cite{Ik})
$$ 1\ =\ \mathds{E} \sum_{i=1}^N T_i^{\alpha} \frac{D_{\alpha}\overline{F}(t T_i)}{D_{\alpha}\overline{F}(t)} \prod_{j<i} \overline{F}(t T_j). $$
Now use Fatou's lemma to infer
\begin{eqnarray*}
1
&=&
\mathds{E} \sum_{i=1}^N T_i^{\alpha} \liminf_{t \downarrow 0}
\frac{D_{\alpha}\overline{F}(t T_i)}{D_{\alpha}\overline{F}(t)} \prod_{j<i} \overline{F}(t T_j)\\
&\ge&
\frac{c_{1}}{c_{2}}\,\mathds{E}  \sum_{i=1}^N T_i^{\alpha}
\ =\ \frac{c_{1}}{c_{2}}\,m(\alpha).
\end{eqnarray*}
But the same argument applies to Eq.\ \eqref{MinFPWBsf} (that
is \eqref{MinFP2} after $n$ iterations of the SFPE) and gives
\begin{equation*}
1\ \ge\ \frac{c_{1}}{c_{2}}\,\mathds{E}  \sum_{|v|=n}L(v)^{\alpha}
\ =\ \frac{c_{1}}{c_{2}}\,m(\alpha)^{n}
\end{equation*}
for each $n\ge 1$ and thereupon the desired conclusion.
\end{proof}

Given two sequences $(a_{n})_{n\ge 1}$, $(b_{n})_{n\ge 1}$
of real numbers, we write $a_{n}\sim b_{n}$ hereafter if $\lim_{n\to\infty}b_{n}^{-1}a_{n}=1$ and $a_{n}\asymp b_{n}$ if $0<\liminf_{n\to\infty}b_{n}^{-1}a_{n}\le
\limsup_{n\to\infty}b_{n}^{-1}a_{n}<\infty$.

\begin{Prop}\label{suffcharacexp}
Let $\alpha>0$ and $\mathfrak{F}_{\wedge,r}^{\alpha}\ne\emptyset$.
Then $\alpha$ is the characteristic exponent of $T$ and $W^{(\alpha)}$ a.s.\ positive.
\end{Prop}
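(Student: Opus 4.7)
The plan is to combine the two lemmas just established (Lemma \ref{Iksanowtrick} and Lemma \ref{OtalphaleqWalpha}) with the dichotomy for the limit $W^{(\alpha)}$ from Theorem \ref{Biggins} (cited in Section \ref{WBP}). Fix any $F\in\mathfrak{F}_{\wedge,r}^{\alpha}$. From Lemma \ref{Iksanowtrick} we already have $m(\alpha)\le 1$, so the hypothesis of Lemma \ref{OtalphaleqWalpha} is satisfied because every $\alpha$-regular fixed point is in particular $\alpha$-bounded. Hence there exists a constant $C>0$ such that, for the disintegration $\mathcal{F}$ of $F$,
\[
-\log\overline{\mathcal{F}}(t)\ \le\ Ct^{\alpha}W^{(\alpha)}\quad\text{a.s.\ for all }t\ge 0.
\]

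The central step is to rule out $W^{(\alpha)}=0$ a.s. If $W^{(\alpha)}=0$ a.s., the inequality above forces $\overline{\mathcal{F}}(t)=1$ a.s.\ for every $t\ge 0$, and then Lemma \ref{Disintegration} gives $\overline{F}(t)=\mathds{E}\overline{\mathcal{F}}(t)=1$ for all $t\ge 0$. But this contradicts $\alpha$-regularity, since $\liminf_{t\downarrow 0}t^{-\alpha}(1-\overline{F}(t))$ would equal $0$, whereas by assumption it is strictly positive. (This is exactly the content of Remark \ref{OtalphaleqWalphaRem}(b), merely specialized to the present setting.)

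Consequently $W^{(\alpha)}$ is not identically zero. Since the text following \eqref{mgf} shows that $m(\alpha)<1$ forces $W^{(\alpha)}=0$ a.s., we must have $m(\alpha)=1$. The dichotomy quoted from Theorem \ref{Biggins} then yields $\mathds{E}W^{(\alpha)}=1$, and assumption \eqref{A1} together with the remark after Theorem \ref{Biggins} (Remark \ref{remcharexp}) promotes non-degeneracy to a.s.\ positivity, i.e.\ $W^{(\alpha)}>0$ a.s. Finally, Corollary \ref{charcharexp} (invoked via Remark \ref{remcharexp}) ensures that whenever $m(\alpha)=1$ and $\mathds{P}(W^{(\alpha)}>0)>0$, the parameter $\alpha$ is actually the \emph{minimal} positive root of $m(\beta)=1$, hence the characteristic exponent of $T$.

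There is no genuinely hard step here; the proposition is essentially a packaging of the previously established facts. The only point one needs to handle carefully is the contradiction at $W^{(\alpha)}=0$: one must use $\alpha$-regularity rather than mere non-triviality of $F$, since the argument goes through the liminf at $0$ rather than through Lemma \ref{Fquer<1} (which would be circular, as that lemma presumes existence of the characteristic exponent).
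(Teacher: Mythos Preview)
Your proof is correct and in fact follows the line of argument that the paper itself sketches in Remark~\ref{OtalphaleqWalphaRem}(b), but it differs from the paper's actual proof of Proposition~\ref{suffcharacexp}. The paper does not invoke Lemma~\ref{OtalphaleqWalpha}; instead it argues directly that, since $R_n\to 0$ a.s.\ and $F$ is $\alpha$-regular,
\[
-\log\overline{\mathcal{F}}_n(t)\ =\ \sum_{|v|=n}\bigl(-\log\overline{F}(tL(v))\bigr)\ \asymp\ \sum_{|v|=n}L(v)^{\alpha}\ =\ W_n^{(\alpha)},
\]
and then uses convergence of the left side to conclude $\liminf_n W_n^{(\alpha)}>0$ on the event $\{\overline{\mathcal{F}}(t)<1\}$, which has positive probability. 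Your route reaches the same destination by the contrapositive: the upper bound $-\log\overline{\mathcal{F}}(t)\le Ct^{\alpha}W^{(\alpha)}$ from Lemma~\ref{OtalphaleqWalpha} forces $\overline{F}\equiv 1$ if $W^{(\alpha)}=0$ a.s. Both arguments use $\alpha$-regularity in two places---once to obtain $m(\alpha)\le 1$ via Lemma~\ref{Iksanowtrick}, and once for the comparison with $L(v)^{\alpha}$---but the paper exploits the two-sided estimate $\asymp$ in a single computation, whereas you split the work by reusing the one-sided Lemma~\ref{OtalphaleqWalpha} and then closing with a contradiction. Your approach is slightly more modular; the paper's is more self-contained. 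One minor remark: your contradiction at $\overline{F}\equiv 1$ need not appeal to $\alpha$-regularity at all, since $\overline{F}\equiv 1$ already fails to be a survival function on $[0,\infty)$.
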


\begin{proof} Note first that $m(\alpha)\le 1$ follows by Lemma \ref{Iksanowtrick} and then $R_{n}\to 0$ a.s.\ by Lemma \ref{Transienz}. Since $F\in\mathfrak{F}_{\wedge,r}^{\alpha}$ is not a Dirac measure
(cf.\ Proposition \ref{beschrSatz2}), we have
$\mathds{P}(\overline{\mathcal{F}}(t)<1)>0$ for some $t>0$. By combining these facts
with $-\log(1-x)\sim x$ as $x\downarrow 0$ and $\alpha$-regularity,
we infer
\begin{eqnarray*}
-\log\overline{\mathcal{F}}_{n}(t)
&=&
\sum_{|v|=n}-\log\overline{F}(tL(v))\\
&\sim&
\sum_{|v|=n}\left(1-\overline{F}(tL(v))\right)\\
&\asymp&
\sum_{|v|=n}L(v)^{\alpha}\ =\ W_{n}^{(\alpha)},\quad n\to\infty
\end{eqnarray*}
which in combination with $-\log\overline{\mathcal{F}}_{n}(t)\to-\log\overline{\mathcal{F}}(t)$ a.s. (Lemma \ref{Disintegration}) shows
$$ \liminf_{n\to\infty}W_{n}^{(\alpha)}\ >\ 0\quad\text{a.s.} $$
on the event $\{\overline{\mathcal{F}}(t)<1\}$. Hence $m(\alpha)=1$,
$W^{(\alpha)}=\lim_{n\to\infty}W_{n}^{(\alpha)}>0$ a.s.\ and $\mathds{E}
W^{(\alpha)}=1$ (by Theorem \ref{Biggins} and \eqref{A1}).
\end{proof}

\section{Proofs of Theorem \ref{alpha-elementar-Charakterisierung} and
Theorem \ref{alpha-elementar-Darstellung}} \label{FRE}

\subsection{Proof of Theorem \ref{alpha-elementar-Charakterisierung}}

\noindent
"(a) $\Rightarrow$ (b)" is trivial as $\mathfrak{F}_{\wedge,e}^{\alpha}
\subset\mathfrak{F}_{\wedge,r}^{\alpha}$.

\vspace{.2cm}\noindent
"(b) $\Rightarrow$ (c)" is Proposition \ref{suffcharacexp}.

\vspace{.2cm}\noindent
"(c) $\Rightarrow$ (a)". Assuming (c) we have $\mathds{E} W^{(\alpha)} = 1$ (see Theorem \ref{Biggins} in the Appendix) whence the Laplace transform $\varphi_{\alpha}$ of $W^{(\alpha)}$ satisfies $\varphi_{\alpha}'(0) = -1$. By Lemma \ref{WrnualphainFinf}, $F(t)\gl 1-\varphi_{\alpha}(t^{\alpha})$, $t \geq 0$, defines a solution to Eq.\ \eqref{MinFP1}. Furthermore,
$$ \lim_{t \downarrow 0}D_{\alpha}\overline{F}(t)
 \ =\ \lim_{t \downarrow 0}t^{-\alpha}(1-\varphi_{\alpha}(t^{\alpha}))
 \ =\ - \varphi_{\alpha}'(0) = 1, $$
and thus $F$ constitutes an $\alpha$-elementary solution to Eq.\ \eqref{MinFP1}.

\vspace{.2cm}\noindent
"(c) $\Leftrightarrow$ (d)".
As already pointed out, this is a consequence of Biggins' martingale limit theorem stated as Theorem \ref{Biggins} in the Appendix.\hfill $\square$

\subsection{Proof of Theorem  \ref{alpha-elementar-Darstellung}}

In view of Lemma \ref{WrnualphainFinf} it remains to verify that $\mathfrak{F}_{\wedge,b}^{\alpha}\subseteq\mathfrak{W}_{\Lambda_{\alpha}}(d,\alpha)$, where
$d=1$, if $\G(T)=\mathds{R}^{+}$, and $d=r$, if $\G(T)=r^{\mathds{Z}}$.
To this end, let $F$ be $\alpha$-bounded with disintegration $\overline{\mathcal{F}}$.
By Lemma \ref{OtalphaleqWalpha}, we have
\begin{equation} \label{mathcalFleqWalpha}
\mathds{P}\left(- \log \overline{\mathcal{F}}(t) \leq Ct^{\alpha} W^{(\alpha)}
 \text{ for all }t\ge 0\right)\ =\ 1
\end{equation}
for a suitable $C > 0$. Recall that $\overline{\mathcal{F}}$ satisfies the multiplicative Eq.\ \eqref{DisintegratedFPE}, which, upon logarithmic transformation and setting
$$ \Psi(t)\ \gl\ e^{-\alpha t}(-\log \overline{\mathcal{F}}(e^t)),\quad t \in \mathds{R} $$
becomes the following pathwise renewal equation (see \eqref{FPFRE}):
$$ \Psi(t)\ =\ \sum_{|v|=n} L(v)^{\alpha} [\Psi]_v(t - S(v))\quad\text{ a.s.}     $$
for all $t \in\mathds{R}$. We want to make use of Theorem \ref{PRE} and must therefore
check its conditions as for the random function $\Psi$. We already mentioned right after Theorem \ref{PRE} that $\Psi$ is product measurable and a.s.\ left-continuous with right hand limits at any $t\in\mathds{R}$. Since, by \eqref{mathcalFleqWalpha},
$$ 0\ \leq\ \Psi(t)\ \leq\ e^{-\alpha t} C e^{\alpha t} W^{(\alpha)}\ =\ C W^{(\alpha)} $$
for all $t \in \mathds{R}$ on a set of probability one, it follows that 
$\sup_{t \in \mathds{R}} \mathds{E} \Psi(t) < \infty$ and that $\Psi$ is locally uniformly integrable. Hence Theorem \ref{PRE} applies and we infer $\Psi(t) = p(t) W^{(\alpha)}$ a.s., where $p$ denotes a measurable $(\log r)$-periodic function in the $r$-geometric case and a positive constant in the continuous case, respectively. In both cases,
\begin{eqnarray*} 
\overline{\mathcal{F}}(t)
& = &
\exp(- t^{\alpha} \Psi(\log t)) \nonumber \\
& = &
\exp(- p(\log t) t^{\alpha} W^{(\alpha)}) \nonumber \\
& = &
\exp(- h(t) t^{\alpha} W^{(\alpha)})
\text{ a.s.}
\end{eqnarray*}
for all $t > 0$, where $h(t) \gl p(\log t)$. Taking expectations on both sides of this equation provides us with
$$ \overline{F}(t) = \varphi_{\alpha}(h(t) t^{\alpha}) \qquad (t>0). $$
Therefore, the proof is complete in the continuous case where $h$ is necessarily constant. For the rest of the proof, suppose we are in the $r$-geometric case. Then $p$ is $(\log r)$-periodic as mentioned above and thus $h$ multiplicatively $r$-periodic. Furthermore, the left continuity of $\overline{F}$ implies the left continuity of the function $t \mapsto h(t) t^{\alpha}$ and, therefore, the left continuity of $h$. Similarly, we conclude that $t \mapsto h(t) t^{\alpha}$ is nondecreasing. These facts together give $h \in \mathfrak{H}(r,\alpha)$, which finally shows  $F \in \mathfrak{W}_{\Lambda_{\alpha}}(1,\alpha)$.\hfill $\square$

\vspace{.5cm}
A combination of Theorem \ref{alpha-elementar-Darstellung} and Lemma
\ref{liminflimsup} provides us with a very short proof of the following result
about the distribution $\Lambda_{\alpha}$ of $W^{(\alpha)}$ as a solution
to \eqref{SumFP1} with $T^{(\alpha)}$ instead of $T$:

\begin{Cor} \label{nualphaunique}
Suppose \eqref{A1} and that $m(\alpha) = 1$ and $\mathds{E}W^{(\alpha)} = 1$ for some $\alpha > 0$. Then $\mathfrak{F}_{\Sigma}(T^{(\alpha)})=
\{\Lambda_{\alpha}(c\,\cdot):c>0\}$, \textit{i.e.}, $\Lambda_{\alpha}$ is the unique
solution up to scaling to \eqref{SumFP1} for $T^{(\alpha)}$.
\end{Cor}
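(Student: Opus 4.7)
The idea is to apply Theorem \ref{alpha-elementar-Darstellung} to the weight vector $T^{(\alpha)}$ at exponent parameter $1$, using that $m_{T^{(\alpha)}}(1)=m(\alpha)=1$ and that the $W$-variable associated with $T^{(\alpha)}$ at exponent $1$ equals $W^{(\alpha)}$ (with mean one by hypothesis), so that both Theorem \ref{alpha-elementar-Darstellung} and Lemma \ref{liminflimsup} are available in this rescaled setting.

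Given $G\in\mathfrak{F}_{\Sigma}(T^{(\alpha)})$ with Laplace transform $\varphi$, I would first observe that $\varphi$ satisfies \eqref{SumFP2} for the weights $T_i^{\alpha}$, so regarded as a survival function it defines an element $F\in\mathfrak{F}_{\wedge}(T^{(\alpha)})$. The key step is then to verify $F\in\mathfrak{F}_{\wedge,b}^{1}(T^{(\alpha)})$. This rests on the standard monotonicity fact that, since $\varphi$ is a Laplace transform, $t^{-1}(1-\varphi(t))=\int(1-e^{-tx})/t\,G(dx)$ is monotone in $t>0$ with genuine limit $\mu_G\gl\int x\,G(dx)\in[0,\infty]$ as $t\downarrow 0$, so $\liminf$ and $\limsup$ of $t^{-1}(1-\overline{F}(t))$ both equal $\mu_G$. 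Lemma \ref{liminflimsup}(b), applied to the min-SFPE driven by $T^{(\alpha)}$ at exponent $1$, then forces $\mu_G<\infty$, giving $1$-boundedness of $F$.

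Theorem \ref{alpha-elementar-Darstellung} now delivers $\overline{F}(t)=\varphi_{\alpha}(h(t)t)$ for some $h\in\mathfrak{H}(d,1)$, where $d=1$ in the continuous case (in which case $h\equiv c$ is a positive constant) and $d=r^{\alpha}$ in the $r$-geometric case. In the continuous case $\varphi(t)=\varphi_{\alpha}(ct)$ immediately identifies $G$ as the distribution of $cW^{(\alpha)}$, an element of $\{\Lambda_{\alpha}(c\,\cdot):c>0\}$. The expected main obstacle is ruling out nonconstant $h$ in the $r$-geometric case. For this I would use that multiplicative $r^{\alpha}$-periodicity of $h$ together with $t\mapsto h(t)t$ being nondecreasing readily implies that $h$ is bounded on $(0,\infty)$, so $h(t)t\downarrow 0$ as $t\downarrow 0$; since $\varphi_{\alpha}'(0)=-\mathds{E}W^{(\alpha)}=-1$, this gives
$$t^{-1}(1-\overline{F}(t))\ =\ h(t)\,\frac{1-\varphi_{\alpha}(h(t)t)}{h(t)t}\ =\ h(t)(1+o(1))\qquad(t\downarrow 0).$$
The left-hand side converges to $\mu_G$, so $h(t)\to\mu_G$ as $t\downarrow 0$; combined with $h(sr^{-n\alpha})=h(s)$ this forces $h(s)=\mu_G$ for every $s>0$, proving $h$ constant. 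Hence $G\in\{\Lambda_{\alpha}(c\,\cdot):c>0\}$ in either case, while the reverse inclusion is immediate from $W^{(\alpha)}=\sum_{i\ge 1}T_i^{\alpha}[W^{(\alpha)}]_i$ a.s.
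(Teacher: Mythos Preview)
Your argument is correct and essentially coincides with the paper's: both pass from the Laplace transform of a putative solution to a survival function solving the min-SFPE, invoke Lemma \ref{liminflimsup}(b) to secure finiteness of $\mu_G=|\varphi'(0)|$, apply Theorem \ref{alpha-elementar-Darstellung} to obtain the representation $\varphi(t)=\varphi_{\alpha}(h(t)t)$, and then force $h$ constant via the existence of $\lim_{t\downarrow 0}t^{-1}(1-\varphi(t))$ combined with periodicity. The only cosmetic difference is that you work with $T^{(\alpha)}$ at exponent $1$ (so $\G(T^{(\alpha)})=r^{\alpha\mathds{Z}}$ and $h\in\mathfrak{H}(r^{\alpha},1)$), whereas the paper stays with $T$ at exponent $\alpha$ by regarding $\psi(t^{\alpha})$ as the survival function; this is just a change of variables and yields the same conclusion.
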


This result (with $\mathds{E}N>1$ instead of condition \eqref{A1})
has been obtained under slightly stronger conditions on $m(\cdot)$
by Biggins and Kyprianou as Theorem 1.5 in \cite{BK1} and as Theorem 3 in \cite{BK3}, where the latter result also covers the boundary case briefly discussed in Remark \ref{commentsthm}(c).

\begin{proof}
Suppose $\Lambda \in \mathfrak{F}_{\Sigma}(T^{(\alpha)})$ is another
solution to \eqref{SumFP1} for $T^{(\alpha)}$ with Laplace transform $\psi$. Regard $\psi(t^{\alpha})$ as the survival function of a probability measure $G$. Then $G \in \mathfrak{F}_{\wedge}$ and $\lim_{t \downarrow 0}D_{\alpha}
\overline{G}(t)=\lim_{t \downarrow 0} t^{-\alpha}(1-\psi(t^{\alpha})) = |\psi'(0)|\in\mathds{R}^{+}$, the finiteness following from Lemma \ref{liminflimsup}. Consequently,
$G \in \mathfrak{F}_{\wedge,b}^{\alpha}$ and thus $\psi(t) = \varphi_{\alpha}(h(t) t)$ for some $h \in \mathfrak{H}(r,1)$ in the $r$ geometric case or a constant $h$ in the continuous case. In both cases, $|\psi'(0)| = \lim_{t \downarrow 0} t^{-1}(1-\psi(t)) = \lim_{t \downarrow 0} h(t)$ implies $h= |\psi'(0)|$, \textit{i.e.}, $\psi (t)= \varphi_{\alpha}(t/c)$ with $c\gl 1/|\psi'(0)|>0$. This proves $\Lambda = \Lambda_{\alpha}(c\,\cdot)$.
\end{proof}

\section{Beyond $\alpha$-boundedness: The generalized water cascades example} \label{cascade}

In the following we shall discuss a class of examples which demonstrates that
nontrivial solutions to the SFPE \eqref{MinFP1} may exist even if $T$
does not have a characteristic exponent. This is in contrast to the additive
case, \textit{i.e.}, Eq.\ \eqref{SumFP1}, for which the existence of the characteristic exponent and the existence of nontrivial solutions are
equivalent, at least under appropriate conditions on $T$ and $N$, see Durrett and Liggett \cite{DL} and Liu \cite{Liu}.

We fix $N \in \mathds{N}$, $N \geq 2$, and denote by $B_1, \ldots, B_N$ independent Bernoulli variables with parameter $\vartheta \in (0,1)$, that is $\mathds{P}(B_i = 1) = \vartheta = 1 - \mathds{P}(B_i = 0)$, $i=1,\ldots,N$. Put $T_i \gl e^{-B_i}$ for $i=1, \ldots, N$ and $T_i = 0$ for $i > N$. Notice that
$\G(T)=e^{\mathds{Z}}$ ($e$-geometric case).
Then Eq.\ \eqref{MinFP1} takes the form
\begin{equation} \label{JagersRoeslerBeispielFPGleichung}
X \stackrel{d}{=} \min_{1\le i\le N} \frac{X_i}{T_i}.
\end{equation}
For $N=2$ and $\vartheta>1/2$, this example was studied by
Jagers and R\"osler \cite{JR}.

\begin{Lemma} \label{kritischLemma}
In the situation of Eq.\ \eqref{JagersRoeslerBeispielFPGleichung} the following assertions are equivalent:
\begin{itemize}
 \item[(a)]
  The characteristic exponent $\alpha > 0$ exists.
 \item[(b)]
  $\vartheta > 1 - 1/N$.
 \item[(c)]
  If $\tilde{N} \gl \sum_{i=1}^N \mathbbm{1}_{\{B_i = 0\}}=
  \sum_{i=1}^N \mathbbm{1}_{\{T_i = 1\}}$, then
  the Galton-Watson process with offspring distribution
  $\mathds{P}(\tilde{N} \in \, \cdot)$ is subcritical.
\end{itemize}
\end{Lemma}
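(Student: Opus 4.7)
The plan is to reduce all three conditions to a single inequality, namely $N(1-\vartheta)<1$, by directly computing the relevant quantities in this very specific model.

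First I would write down the generating function $m$ explicitly. Since each $T_i$ (for $i=1,\ldots,N$) takes the value $1$ with probability $1-\vartheta$ and the value $e^{-1}$ with probability $\vartheta$, and $T_i=0$ for $i>N$, one has
\begin{equation*}
m(\beta)\ =\ \mathds{E}\sum_{i=1}^{N}T_i^{\beta}\ =\ N\bigl((1-\vartheta)+\vartheta e^{-\beta}\bigr),\quad \beta\ge 0.
\end{equation*}
Thus $m$ is continuous and strictly decreasing on $[0,\infty)$, with $m(0)=N\ge 2$ and $\lim_{\beta\to\infty}m(\beta)=N(1-\vartheta)$.

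For the equivalence \textbf{(a)}$\Leftrightarrow$\textbf{(b)}, I would argue that by the intermediate value theorem the equation $m(\alpha)=1$ has a (necessarily unique) positive solution precisely when $N(1-\vartheta)<1$, that is when $\vartheta>1-1/N$. In that case one may solve explicitly $e^{-\alpha}=1-(1-1/N)/\vartheta\in(0,1)$, which confirms existence of the characteristic exponent in the sense of the Introduction.

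For \textbf{(b)}$\Leftrightarrow$\textbf{(c)}, I would simply observe that $\tilde{N}=\sum_{i=1}^{N}\mathbbm{1}_{\{B_i=0\}}$ is Binomially distributed with parameters $N$ and $1-\vartheta$, so $\mathds{E}\tilde{N}=N(1-\vartheta)$. The Galton--Watson process with offspring law $\mathds{P}(\tilde{N}\in\,\cdot)$ is subcritical iff its mean is strictly less than one, which again reads $N(1-\vartheta)<1$, i.e.\ $\vartheta>1-1/N$.

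There is no real obstacle here: the three statements are tied together through the single threshold $N(1-\vartheta)<1$, and the argument is essentially a short direct computation. The only point worth a brief sanity check is the strict monotonicity (and hence injectivity) of $m$, which is immediate from the explicit formula and ensures that the characteristic exponent, when it exists, is uniquely determined.
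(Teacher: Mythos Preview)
Your proposal is correct and follows essentially the same route as the paper: compute $m(\beta)=N((1-\vartheta)+\vartheta e^{-\beta})$, observe that $m(\alpha)=1$ has a positive solution iff $N(1-\vartheta)<1$, and note $\mathds{E}\tilde{N}=N(1-\vartheta)$ for the equivalence with (c). Your version simply spells out the intermediate value argument and the binomial distribution of $\tilde{N}$ a bit more explicitly.
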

\begin{proof}

Under stated assumptions, we have
$$ m(\alpha)\ =\ \mathds{E} \sum_{i=1}^N T_i^{\alpha}\ =\ N (\vartheta e^{-\alpha} + (1-\vartheta)). $$
Therefore $N(1-\vartheta) < 1$ is necessary and sufficient for the existence of a positive $\alpha$ such that $m(\alpha) = 1$. This proves that (a) and (b) are equivalent. The equivalence of (b) and (c) is obvious as
$\mathds{E} \tilde{N} = N (1-\vartheta)$.
\end{proof}

According to Lemma \ref{kritischLemma}, we distinguish three cases:
\begin{itemize} 
\item[{(1)}] \emph{Subcritical case}: $\vartheta > 1 - 1/N$.
\vspace{-.2cm}
\item[{(2)}] \emph{Critical case}: $\vartheta = 1 - 1/N$.
\vspace{-.2cm}
\item[{(3)}] \emph{Supercritical case}: $\vartheta < 1 - 1/N$.
\end{itemize}
Consider the associated WBP as introduced in Section \ref{WBP}. Now define
$$ \tilde{L}(v)\ \gl\ \mathbbm{1}_{\{L(v)=1\}} $$
for $v \in \V_N\gl \bigcup_{n \geq 0} \{1,\ldots,N\}^n$.
By our model assumptions, $\tilde{G}_n \gl \sum_{|v|=n} \tilde{L}(v)$ ($n \geq 0$) defines a Galton-Watson process with offspring distribution $\mathds{P}(\tilde{N} \in \, \cdot)$. In the supercritical case, $(\tilde{G}_n)_{n \geq 0}$ survives with positive probability, thus
\begin{equation} \label{exceptional}
\mathds{P}(\limsup_{n \to \infty} R_{n} = 1)\ =\ \mathds{P}(\tilde{G}_n \text{ survives}) > 0. 
\end{equation}
The main outcome of the subsequent discussion will be that Eq.\ \eqref{JagersRoeslerBeispielFPGleichung} has nontrivial solutions in all three possible cases.  In view of \eqref{exceptional} in the supercritical case, this shows that the existence of a nontrivial solution to \eqref{MinFP1} does not necessarily entail $\limsup_{n \to \infty} R_{n} = 0$ a.s.\ as intuition might suggest.

We proceed with the construction of nontrivial solutions to equation 
\eqref{JagersRoeslerBeispielFPGleichung} and start by taking a look at the associated functional equation. Given a solution $F$, the latter takes the form
\begin{equation*}
\overline{F}(t)\ =\ \mathds{E} \prod_{i=1}^N \overline{F}(t T_i)
\ =\ \left(\vartheta \overline{F}(te^{-1}) + (1-\vartheta) \overline{F}(t)\right)^N,
\quad t\ge 0,
\end{equation*}
which, upon solving for $\overline{F}(te^{-1})$, leads to
\begin{equation} \label{Fte^-1}
  \overline{F}(te^{-1}) 
 \ =\ \vartheta^{-1}\left(\overline{F}(t)^{1/N} - (1-\vartheta) \overline{F}(t)\right)
 \ =\ g_{N,\vartheta}(\overline{F}(t)),
\end{equation}
where $g _{N,\vartheta}(u) \gl \vartheta^{-1}\left(u^{1/N} - (1-\vartheta)u \right)$ for $0 \leq u \leq 1$. The following lemma collects some properties of $g_{N,\vartheta}$.

\begin{Lemma} \label{gNthetaDiskussion}
The following assertions hold true for $g _{N,\vartheta}$:
\begin{itemize}
 \item[(a)]
  $\{u \in [0,1]:g _{N,\vartheta}(u) = u\}=\{0,1\}$.
 \item[(b)]
  $g _{N,\vartheta}(u) > u$ for all $u \in (0,1)$.
 \item[(c)]
  If $\vartheta \geq 1-1/N$ (subcritical or critical case), then 
  $g _{N,\vartheta}$ is strictly increasing on $(0,1)$. 
  In particular, $g_{N,\vartheta}(u) \in (0,1)$ for all $u \in (0,1)$.
 \item[(d)]
  If $\vartheta < 1-1/N$ (supercritical case), there exists a unique 
  $a_0 \in (0,1)$ satisfying $g _{N,\vartheta}(a_0) = 1$.
  $g_{N,\vartheta}$ is strictly increasing on $[0,a_0]$ and $>1$
  on $(a_{0},1)$.
\end{itemize}
\end{Lemma}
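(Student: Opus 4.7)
The plan is to proceed by elementary calculus on $g_{N,\vartheta}(u) = \vartheta^{-1}(u^{1/N} - (1-\vartheta)u)$ for $u \in [0,1]$. Parts (a) and (b) are essentially immediate and I would dispose of them first. Rearranging $g_{N,\vartheta}(u) = u$ yields $u^{1/N} = u$, whose only roots in $[0,1]$ are $0$ and $1$, which is (a). For (b), $1/N < 1$ gives $u^{1/N} > u$ for every $u \in (0,1)$, hence $g_{N,\vartheta}(u) - u = \vartheta^{-1}(u^{1/N} - u) > 0$.

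For (c) and (d) I would analyze the derivative
$$ g_{N,\vartheta}'(u) \ =\ \vartheta^{-1}\left(\tfrac{1}{N}u^{1/N - 1} - (1-\vartheta)\right), \quad u \in (0,1). $$
Since $u \mapsto u^{1/N-1}$ is strictly decreasing on $(0,\infty)$, the equation $g_{N,\vartheta}'(u) = 0$ has at most one solution, namely
$$ u^{*}\ \gl\ \bigl(N(1-\vartheta)\bigr)^{-N/(N-1)}, $$
with $g_{N,\vartheta}' > 0$ on $(0,u^{*})$ and $g_{N,\vartheta}' < 0$ on $(u^{*},\infty)$. Note that $u^{*} \geq 1$ iff $N(1-\vartheta) \leq 1$ iff $\vartheta \geq 1 - 1/N$.

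In the sub/critical case $\vartheta \geq 1 - 1/N$, we have $u^{*} \geq 1$, so $g_{N,\vartheta}' > 0$ throughout $(0,1)$; combined with $g_{N,\vartheta}(0) = 0$ and $g_{N,\vartheta}(1) = 1$, strict monotonicity gives $g_{N,\vartheta}(u) \in (0,1)$ for all $u \in (0,1)$, proving (c). In the supercritical case $\vartheta < 1 - 1/N$, we have $u^{*} \in (0,1)$ and $g_{N,\vartheta}$ attains its maximum over $[0,1]$ at $u^{*}$. Since $g_{N,\vartheta}$ is strictly decreasing on $[u^{*},1]$ with $g_{N,\vartheta}(1) = 1$, we get $g_{N,\vartheta}(u^{*}) > 1$. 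By the intermediate value theorem applied on $[0,u^{*}]$, where $g_{N,\vartheta}$ is strictly increasing from $0$ to $g_{N,\vartheta}(u^{*}) > 1$, there is a unique $a_{0} \in (0,u^{*})$ with $g_{N,\vartheta}(a_{0}) = 1$. Strict monotonicity on $[0,u^{*}]$ then yields $g_{N,\vartheta} > 1$ on $(a_{0},u^{*}]$, while strict decrease on $[u^{*},1]$ together with $g_{N,\vartheta}(1) = 1$ yields $g_{N,\vartheta} > 1$ on $[u^{*},1)$. Combining, $g_{N,\vartheta} > 1$ on $(a_{0},1)$, giving (d).

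No step presents a serious obstacle; the only mildly delicate point is the critical boundary case $\vartheta = 1 - 1/N$, where $u^{*} = 1$ so $g_{N,\vartheta}'(1) = 0$. But since $g_{N,\vartheta}'(u) > 0$ for all $u \in (0,1)$, strict monotonicity on $[0,1]$ still holds, so (c) is unaffected.
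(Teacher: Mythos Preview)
Your proof is correct and follows essentially the same calculus-based approach as the paper: solve $u^{1/N}=u$ for (a), analyze the derivative $g_{N,\vartheta}'$ and its unique zero to settle (c) and (d). Your argument for (b) is in fact slightly cleaner than the paper's (which argues via $g_{N,\vartheta}'(0+)=\infty$ and continuity), since you directly compute $g_{N,\vartheta}(u)-u=\vartheta^{-1}(u^{1/N}-u)>0$.
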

\begin{proof}
Obviously, $g_{N,\vartheta}(u) = u$ holds iff $u^{1/N} = u$ and thus iff
$u \in \{0,1\}$, for $N\ge 2$. This shows (a). Next, $g_{N,\vartheta}(u) > u$ for all sufficiently small $u>0$ because 
$g_{N,\vartheta}$ is continuously differentiable on $(0,1]$ with $\lim_{u \downarrow 0} g_{N,\vartheta}'(u) = \infty$. But this gives (b), by the continuity of $g_{N,\vartheta}$ and (a), and we also infer that $g_{N,\vartheta}'$ is positive in a right neighborhood of $0$. Furthermore, $g_{N,\vartheta}'(u) = \vartheta^{-1}\left(N^{-1} u^{1/N - 1} - (1-\vartheta) \right)$, $u\in (0,1]$, implies that $g_{N,\vartheta}'(u) = 0$ for $u > 0$ iff $u^{1/N - 1} = N(1-\vartheta)$. Hence $g_{N,\vartheta}'(u) \not = 0$ on $(0,1)$ in the subcritical and critical case ($N(1-\vartheta) \leq 1$), and (c) is true. In the supercritical case ($N(1-\vartheta) > 1$), we have $g_{N,\vartheta}'(a) = 0$ for a unique $a\in (0,1)$. Consequently, $g_{N,\vartheta}$ is strictly increasing on $(0,a)$ and strictly decreasing on $(a,1)$. Since $g_{N,\vartheta}(1)=1$, (d) must be true (\textit{cf.}\ also
Figure 1).
\end{proof}

\begin{figure}[htbp]
 \begin{center}
  \includegraphics[width=8.1cm]{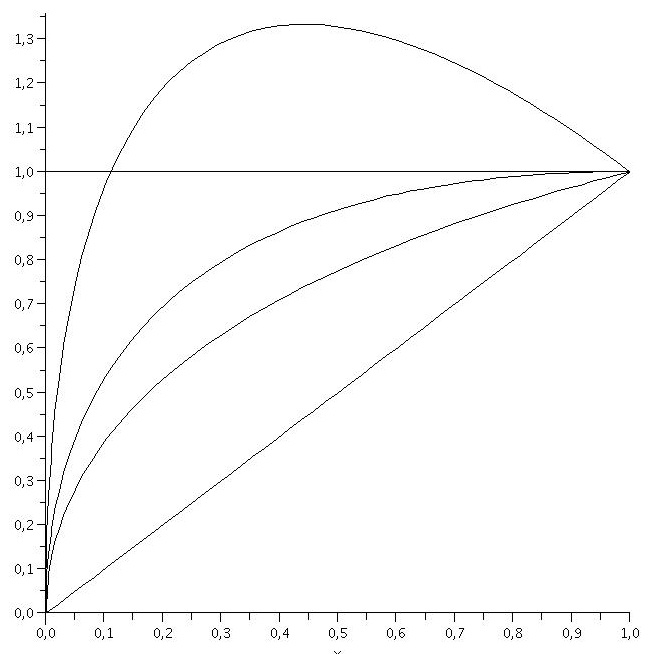}
  \caption{$g_{N,\vartheta}$ for $N=2$ and $\vartheta = 1/4, 1/2, 3/4$
       and the identity function (from top to bottom)}
 \end{center}
 \label{fig:gGraphik}
\end{figure}

\noindent
{\bf A. Critical and subcritical case}. Assuming $\vartheta \geq 1-1/N$,
we have, by Lemma \ref{gNthetaDiskussion}, that $g_{N,\vartheta}$ is strictly increasing with unique fixed points $0$ and $1$ in the unit interval. Therefore, its inverse function, denoted by $g_{N,\vartheta}^{\circ(-1)}$, exists on $[0,1]$. We can rewrite Eq.\ \eqref{Fte^-1} in terms of $g_{N,\vartheta}^{\circ(-1)}$ as
$$ \overline{F}(t) = g_{N,\vartheta}^{\circ(-1)}(t/e),\quad t \geq 0. $$
Equations of this type have been completely solved in Theorem 2.1 of \cite{AM1}, and its application allows us here to provide a full description of $\mathfrak{F}_{\wedge}$. To this end, let $g_{N,\vartheta}^{\circ(n)}$ denote the $|n|$-fold composition of $g_{N,\vartheta}$ ($n \geq 1$) or its inverse $g_{N,\vartheta}^{\circ(-1)}$ ($n \leq -1$), and let $g_{N,\vartheta}^{\circ(0)}$ be the identity function.
Although the situation in \cite{AM1} differs slightly from ours, we adopt the notation from there and write $\mathcal{F}_+$ for the set of nonincreasing, left continuous functions $f:(1,e] \rightarrow (0,1)$, satisfying $f(t) \leq g_{N,\vartheta}(f(e))$ for all $t \in (1,e]$. Then the following theorem is proved
along the same lines as Theorem 2.1 of \cite{AM1} with only minor changes
in obvious places.

\begin{Theorem}[see \cite{AM1}, Theorem 2.1] \label{Maximin}
If $\vartheta \geq 1-1/N$ (subcritical or critical case), there is a one-to-one
correspondence $F\leftrightarrow f$ between $\mathfrak{F}_{\wedge}$ and $\mathcal{F}_+$, established by
\begin{equation} \label{(sub-)critical}
\overline{F}(t) = g_{N,\vartheta}^{\circ(-n)} f(t/e^n),
\end{equation}
where $n \in \mathds{Z}$ denotes the unique integer satisfying $1 < t/e^n \leq e$.
\end{Theorem}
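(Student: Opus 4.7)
The approach is to exploit Lemma \ref{gNthetaDiskussion}(c): in the subcritical and critical case, $g_{N,\vartheta}$ is a bijection of $[0,1]$ onto $[0,1]$ fixing only $0$ and $1$. Consequently, (\ref{Fte^-1}) in the form $\overline{F}(te^{-1}) = g_{N,\vartheta}(\overline{F}(t))$ lets us reconstruct $\overline{F}$ on all of $\mathds{R}^{+}$ from its restriction $f := \overline{F}|_{(1,e]}$ by iterating $g_{N,\vartheta}$ or its inverse, yielding exactly formula (\ref{(sub-)critical}). The two maps $F \mapsto f$ and $f \mapsto \overline{F}$ are thus mutual inverses at the level of functions, so the theorem reduces to verifying that they respect the sets $\mathfrak{F}_{\wedge}$ and $\mathcal{F}_{+}$.

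For the direction $F \mapsto f$, left continuity and monotonicity of $f$ are inherited from $\overline{F}$. To see that $f$ takes values in $(0,1)$, I argue by contradiction: if $\overline{F}(t_{0}) = 0$ for some $t_{0} > 0$, then iterating (\ref{Fte^-1}) with $g_{N,\vartheta}(0) = 0$ yields $\overline{F}(t_{0}e^{-n}) = 0$ for all $n \geq 0$, contradicting $\lim_{t\downarrow 0}\overline{F}(t) = 1$ (which follows from Lemma \ref{wlog}); dually, if $\overline{F}(t_{0}) = 1$, then $g_{N,\vartheta}^{\circ(-1)}(1) = 1$ forces $\overline{F}(t_{0}e^{n}) = 1$ for all $n \geq 0$, contradicting $\lim_{t\to\infty}\overline{F}(t) = 0$. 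The inequality $f(t) \leq g_{N,\vartheta}(f(e))$ follows from monotonicity of $\overline{F}$ together with (\ref{Fte^-1}): for $t \in (1,e]$, $f(t) \leq \overline{F}(1) = g_{N,\vartheta}(\overline{F}(e)) = g_{N,\vartheta}(f(e))$.

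For the direction $f \mapsto \overline{F}$, define $\overline{F}$ by (\ref{(sub-)critical}). On each interval $(e^{n}, e^{n+1}]$, $\overline{F}$ is the composition of the strictly increasing continuous map $g_{N,\vartheta}^{\circ(-n)}$ with the nonincreasing left-continuous map $f(\cdot/e^{n})$, hence is itself nonincreasing and left continuous. The main obstacle is the gluing at the breakpoints $t = e^{n+1}$: left continuity there is automatic since $e^{n+1} \in (e^{n},e^{n+1}]$ and $f$ is left continuous at $e$, but monotonicity across the boundary amounts to $\overline{F}(e^{n+1}+) \leq \overline{F}(e^{n+1})$, which, after applying $g_{N,\vartheta}^{\circ(n+1)}$ to both sides, reduces precisely to $f(1+) \leq g_{N,\vartheta}(f(e))$---the defining condition of $\mathcal{F}_{+}$. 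The boundary behavior $\overline{F}(0+) = 1$ and $\overline{F}(\infty) = 0$ then follows by sandwiching $\overline{F}(t) = g_{N,\vartheta}^{\circ(-n)}(f(t/e^{n}))$ between the iterates $g_{N,\vartheta}^{\circ(\pm k)}$ applied to $f(e)$ and $f(1+) \leq g_{N,\vartheta}(f(e)) < 1$, using that $1$ is an attracting fixed point of $g_{N,\vartheta}$ from within $(0,1)$ and $0$ is an attracting fixed point of $g_{N,\vartheta}^{\circ(-1)}$ from within $(0,1)$ (Lemma \ref{gNthetaDiskussion}). Finally, $\overline{F}$ satisfies (\ref{Fte^-1}) by construction, and unwinding the definition of $g_{N,\vartheta}$ shows that (\ref{Fte^-1}) is equivalent to the SFPE (\ref{MinFP2}), so $F \in \mathfrak{F}_{\wedge}$ as required.
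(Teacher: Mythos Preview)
Your argument is correct and complete. The paper does not actually prove this theorem but merely states that it ``is proved along the same lines as Theorem 2.1 of \cite{AM1} with only minor changes in obvious places''; your proposal supplies precisely that argument, and the route you take---restricting a solution to a fundamental domain $(1,e]$, checking the compatibility condition at the boundary reduces to $f(1+)\le g_{N,\vartheta}(f(e))$, and recovering the global solution by iterating the bijection $g_{N,\vartheta}$---is the natural and expected one.
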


In the subcritical, where the characteristic exponent exists,
we can state the following interesting corollary.

\begin{Cor} \label{Finf=Wralpha}
Suppose $\vartheta > 1-1/N$ (subcritical case) and let $\alpha$ be the characteristic exponent, \textit{i.e.}, $m(\alpha)=1$. Then $\mathfrak{F}_{\wedge} = \mathfrak{W}_{\Lambda_{\alpha}}(e,\alpha)$. In particular, any $F\in \mathfrak{F}_{\wedge}$ can be written as $F(t) = 1-\varphi_{\alpha}(h(t) t^{\alpha})$, $t > 0$ for a unique $h \in \mathfrak{H}(e,\alpha)$.
\end{Cor}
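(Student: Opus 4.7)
The inclusion $\mathfrak{W}_{\Lambda_{\alpha}}(e,\alpha) \subseteq \mathfrak{F}_{\wedge}$ is exactly the content of Lemma \ref{WrnualphainFinf}, once one notes that $\G(T)=e^{\mathds{Z}}$. For the reverse inclusion, the strategy is to invoke Theorem \ref{alpha-elementar-Darstellung}, whose hypothesis $\mathds{E}W^{(\alpha)}=1$ is easy: the random walk $(\overline{S}_{\alpha,n})_{n\ge 0}$ has nonnegative increments supported on $\{0,1\}$ with $\Sigma_{\alpha,1}(\{1\})=N\vartheta e^{-\alpha}>0$ (whence drift to $+\infty$) and $W_1^{(\alpha)}=\sum_{i=1}^{N}T_i^{\alpha}\leq N$ is bounded, so condition (d) of Theorem \ref{alpha-elementar-Charakterisierung} holds trivially. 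It then suffices to verify that every $F\in\mathfrak{F}_{\wedge}$ is $\alpha$-bounded.

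Given $F\in\mathfrak{F}_{\wedge}$, Theorem \ref{Maximin} produces $f\in\mathcal{F}_+$ with $\overline{F}(se^{-k})=g_{N,\vartheta}^{\circ k}(f(s))$ for $s\in(1,e]$ and $k\ge 0$. Setting $\phi(v)\gl 1-g_{N,\vartheta}(1-v)$ and $v_k(s)\gl\phi^{\circ k}(1-f(s))$, one has $1-\overline{F}(se^{-k})=v_k(s)$. The key algebraic identity is that the characteristic-exponent equation $m(\alpha)=N(\vartheta e^{-\alpha}+1-\vartheta)=1$ rearranges to
$$ \phi'(0)\ =\ g_{N,\vartheta}'(1)\ =\ \vartheta^{-1}\bigl(N^{-1}-(1-\vartheta)\bigr)\ =\ e^{-\alpha}, $$
so the linearization of $\phi$ at its attracting fixed point $0$ contracts by precisely the factor $e^{-\alpha}$, matching the scale $(se^{-k})^{\alpha}\asymp e^{-\alpha k}$ needed for $\alpha$-boundedness.

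The main technical step is to convert this pointwise statement into the uniform bound $\sup_{k\geq 0,\,s\in(1,e]}v_k(s)e^{\alpha k}<\infty$. The defining condition $f(s)\leq g_{N,\vartheta}(f(e))$ in $\mathcal{F}_+$ confines $v_0(s)$ to the compact subinterval $[1-g_{N,\vartheta}(f(e)),\,1-f(e)]$ of $(0,1)$. Since $\phi$ is strictly increasing and convex on $[0,1]$ with $\phi(0)=0$, the ratio $\phi(v)/v$ is nondecreasing in $v$ with limit $e^{-\alpha}$ at $0$; hence on the relevant range it is bounded above by some $q<1$. This gives $v_{k+1}(s)\leq q\,v_k(s)$ and therefore $v_k(s)\leq q^k$ uniformly in $s$. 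The refined expansion $\log(\phi(v)/v)=-\alpha+O(v)$ near $v=0$ then yields
$$ \log\!\bigl(v_k(s)\,e^{\alpha k}\bigr)\ =\ \log v_0(s)+\sum_{j=0}^{k-1}\bigl(\alpha+\log(\phi(v_j(s))/v_j(s))\bigr), $$
whose summands are $O(v_j(s))=O(q^j)$ uniformly in $s$, so the partial sums remain bounded uniformly in $k$ and $s$. This is precisely $\limsup_{t\downarrow 0}t^{-\alpha}(1-\overline{F}(t))<\infty$.

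With $\alpha$-boundedness in hand, Theorem \ref{alpha-elementar-Darstellung} delivers $F\in\mathfrak{W}_{\Lambda_{\alpha}}(e,\alpha)$, completing the description of $\mathfrak{F}_{\wedge}$. Uniqueness of $h\in\mathfrak{H}(e,\alpha)$ in the representation $F(t)=1-\varphi_{\alpha}(h(t)t^{\alpha})$ is immediate from the strict monotonicity of $\varphi_{\alpha}$. The crux is the uniform control of the iterates $v_k(s)e^{\alpha k}$; everything else is a straightforward application of results already established.
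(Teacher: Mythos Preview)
Your proof is correct but takes a genuinely different route from the paper. The paper argues directly: given $F\in\mathfrak{F}_{\wedge}$, it \emph{defines} $h(t)\gl\varphi_{\alpha}^{\circ(-1)}(\overline{F}(t))\,t^{-\alpha}$ on $(1,e]$, extends it $e$-periodically, checks $h\in\mathfrak{H}(e,\alpha)$, and observes that the resulting $G(t)=1-\varphi_{\alpha}(h(t)t^{\alpha})$ is another element of $\mathfrak{F}_{\wedge}$ agreeing with $F$ on $(1,e]$; the uniqueness part of Theorem~\ref{Maximin} then forces $F=G$. No $\alpha$-boundedness, no iteration analysis, no appeal to Theorem~\ref{alpha-elementar-Darstellung}.

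Your approach instead reduces to the general machinery by proving that every fixed point is $\alpha$-bounded, via a local linearization of the iteration $g_{N,\vartheta}^{\circ k}$ at its fixed point $1$. The key observation $g_{N,\vartheta}'(1)=e^{-\alpha}$ is a nice structural fact (it explains \emph{why} the characteristic exponent governs the decay rate of $1-\overline{F}$), and your convexity/monotonicity argument for the uniform geometric bound on $v_k(s)$ is clean. The trade-off: the paper's proof is shorter and more self-contained, while yours makes explicit that this example fits into the framework of Theorem~\ref{alpha-elementar-Darstellung} (i.e.\ $\mathfrak{F}_{\wedge}=\mathfrak{F}_{\wedge,b}^{\alpha}$ here), and incidentally yields the convergence rate $1-\overline{F}(se^{-k})\asymp e^{-\alpha k}$ with explicit control of the constants.
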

\begin{proof}
By Proposition \ref{WrnualphainFinf}, $\mathfrak{W}_{\Lambda_{\alpha}}(e,\alpha) \subseteq \mathfrak{F}_{\wedge}$. Conversely, fix any $F\in \mathfrak{F}_{\wedge}$ and put $h(t) \gl \varphi_{\alpha}^{\circ(-1)}(\overline{F}(t)) t^{-\alpha}$ ($1< t \leq e$), where $\varphi_{\alpha}^{\circ(-1)}:(0,\infty) \rightarrow (0,1)$ denotes the inverse function of $\varphi_{\alpha}$, the Laplace transform of $W^{(\alpha)}$ (note that $0<\overline{F}(t)<1$ for all $t>0$ by Lemma \ref{Fquer<1} and Remark \ref{CompactSupport}). Extend $h$ to a multiplicatively $e$-periodic function on $(0,\infty)$. Then it can easily be seen that $h \in \mathfrak{H}(e,\alpha)$. Thus, $G(t)\gl 1-\varphi_{\alpha}(h(t) t^{\alpha})$ ($t > 0$) defines an element of $\mathfrak{W}_{\Lambda_{\alpha}}(e,\alpha)$ and thus an element of $\mathfrak{F}_{\wedge}$ as well. Moreover, $F$ and $G$ coincide on $(1,e]$ and, as a consequence of Theorem \ref{Maximin}, $F$ and $G$ are uniquely determined by their values on $(1,e]$. Hence $F = G \in \mathfrak{W}_{\Lambda_{\alpha}}(e,\alpha)$.
\end{proof}

\noindent
{\bf B. Supercritical case}. Assuming now $\vartheta<1-1/N$, we follow an idea of 
R\"osler and Jagers (\cite{JR}, 2004) and construct a particular nontrivial (and discrete) solution $F$ to Eq.\ \eqref{JagersRoeslerBeispielFPGleichung} which is then shown to be unique up to scaling (Theorem \ref{superkritischSatz}). The first step is to define a sequence $(a_n)_{n \geq 0}$ such that $\{0,1-a_{0},1-a_{1},\ldots\}$ forms the range of $\overline{F}$. Let $a_{0}$ be the unique (by Lemma \ref{gNthetaDiskussion}(d)) value in $(0,1)$ such that $g_{N,\vartheta}(a_0) = 1$. Lemma \ref{gNthetaDiskussion} also ensures that $g_{N,\vartheta}$ is strictly increasing on $[0,a_0]$ and $g_{N,\vartheta}(u) > u$ for $u \in (0,a_0]$. Hence $g_{N,\vartheta}^{\circ(-1)}:[0,1]\to [0,a_0]\subset [0,1]$ as well as its iterations are well defined, and we
we can choose $a_n \gl g_{N,\vartheta}^{\circ(-n)}(a_0)$ for $n \geq 1$. Evidently, $(a_n)_{n \geq 0}$ constitutes a decreasing sequence of positive numbers and thus $a_{\infty} \gl \lim_{n \to \infty} a_n$ exists. Since $g_{N,\vartheta}$ is continuous, we have $g_{N,\vartheta}(a_{\infty}) = a_{\infty}$ so that $a_{\infty} = 0$ by Lemma \ref{gNthetaDiskussion}(a). We now define our candidate $F$ as
\begin{equation} \label{JRLoesung}
F(t)\ \gl\
  \begin{cases}
  0 & \text{ if } 0 \leq t \leq 1, \\
  1-a_n & \text{ if } e^n < t \leq e^{n+1}\text { for } n \in \mathds{N}_0
  \end{cases}
\end{equation}
and note that $F$ is clearly left continuous and increasing with $\lim_{t \to \infty} \overline{F}(t) = 1$ and therefore a distribution function. Moreover, for $n \in \mathds{N}_0$ and  $e^n < t \leq e^{n+1}$,
$$ \overline{F}(te^{-1})\ =\ a_{n-1} = g_{N,\vartheta} (a_n) = g_{N,\vartheta}(\overline{F}(t)), $$
where $a_{-1}\gl1$. As $\overline{F}(t) = 1$ for $t \leq 1$, this shows that $\overline{F}$ solves \eqref{Fte^-1} and so $F\in\mathfrak{F}_{\wedge}$.

\begin{Theorem} \label{superkritischSatz}
If $\vartheta < 1-1/N$ (supercritical case), then 
\begin{equation*}
\mathfrak{F}_{\wedge}\ =\ \{G\in\mathcal{P}:G(t) = F(ct)\text{ for all }t\ge 0\text{ and
some }c>0\},
\end{equation*}
where $F$ is given by \eqref{JRLoesung}.
\end{Theorem}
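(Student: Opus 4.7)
The plan has two parts. For the easy inclusion $\supseteq$, observe that Eq.\ \eqref{JagersRoeslerBeispielFPGleichung} is invariant under multiplicative rescaling: if $\overline{F}$ solves it, so does $t \mapsto \overline{F}(ct)$ for any $c > 0$. Hence every scaled copy of the explicit solution $F$ from \eqref{JRLoesung} lies in $\mathfrak{F}_{\wedge}$.

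For the hard inclusion $\subseteq$, fix $G \in \mathfrak{F}_{\wedge}$; its survival function $\overline{G}$ is left continuous, nonincreasing, satisfies $\overline{G}(0) = 1$ by Lemma \ref{wlog}, and obeys the pointwise equation $\overline{G}(t e^{-1}) = g_{N,\vartheta}(\overline{G}(t))$ from \eqref{Fte^-1}. The crux is to establish that the range of $\overline{G}$ is contained in the countable set $\{0\} \cup \{a_n : n \geq 0\} \cup \{1\}$. By Lemma \ref{gNthetaDiskussion}(d), $\overline{G}$ cannot take values in $(a_0, 1)$, since $g_{N,\vartheta} > 1$ there would force $\overline{G}(t e^{-1}) > 1$. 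Inductively, if the range excludes $\bigcup_{k \leq n}(a_{k+1}, a_k)$ (with $a_{-1} := 1$), then the strict monotonicity of $g_{N,\vartheta}$ on $[0, a_0]$ would map any putative value in $(a_{n+2}, a_{n+1})$ to one in $(a_{n+1}, a_n)$, a contradiction. This discreteness step is the main obstacle and relies on the precise supercritical geometry of $g_{N,\vartheta}$.

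Set $t_0 := \sup\{t \geq 0 : \overline{G}(t) = 1\}$; this is finite because $\overline{G}(t) \to 0$, and $\overline{G} \equiv 1$ on $[0, t_0]$ by monotonicity and left continuity. Assuming $t_0 > 0$, inserting $t = et_0$ into the functional equation gives $g_{N,\vartheta}(\overline{G}(et_0)) = 1$ with $\overline{G}(et_0) < 1$, so Lemma \ref{gNthetaDiskussion}(d) and the range restriction force $\overline{G}(et_0) = a_0$. Monotonicity then pins $\overline{G} \equiv a_0$ on $(t_0, et_0]$, and induction using $g_{N,\vartheta}(a_{n+1}) = a_n$ together with the uniqueness of preimages of $a_n$ in the admissible range yields $\overline{G} \equiv a_n$ on $(e^n t_0, e^{n+1} t_0]$ for every $n \geq 0$. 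This coincides exactly with $\overline{F}(t/t_0)$, so $G(t) = F(ct)$ with $c = 1/t_0$. Finally, $t_0 = 0$ is ruled out: if so, any $s > 0$ with $\overline{G}(s) = a_n$ would, upon iterating the functional equation $n+1$ times downward, give $\overline{G}(s e^{-n-1}) = 1$ at the positive point $s e^{-n-1}$, contradicting $t_0 = 0$; hence $\overline{G} \equiv 0$ on $(0, \infty)$, forcing $G = \delta_0 \notin \mathfrak{F}_{\wedge}$, a contradiction.
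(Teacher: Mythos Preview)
Your proof is correct and follows essentially the same approach as the paper's: both show that the range of $\overline{G}$ is contained in $\{1\}\cup\{a_n:n\ge 0\}$ by exploiting that a value in any gap $(a_{n},a_{n-1})$ is pushed by iteration of $g_{N,\vartheta}$ into $(a_0,1)$ and then above $1$, and then both pin down the scaling via $t_0=\sup\{t:\overline{G}(t)=1\}$. The paper compresses the range argument into a single $(n{+}1)$-fold iteration rather than your downward induction on the excluded intervals, and it leaves the exclusion of $t_0=0$ and the interval-by-interval identification $\overline{G}\equiv a_n$ on $(e^n t_0,e^{n+1}t_0]$ as ``further details are omitted''; you spell these out explicitly.
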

\begin{proof}
We have already proved that $F\in \mathfrak{F}_{\wedge}$. Also, $\mathfrak{F}_{\wedge}$ is obviously closed under scaling, \textit{i.e.}, $\mathfrak{F}_{\wedge} \supseteq \{\overline{F}(c \, \cdot): \, c>0\}$. Conversely, let $G\in \mathfrak{F}_{\wedge}$ and notice that $G$ cannot be concentrated in a single point by Proposition \ref{beschrSatz2}. Hence we can choose $t > 0$ such that $\overline{G}(t) \in (0,1)$. Now suppose $\overline{G}(t) \notin \{a_n: n \geq 0\}$. Then there exists a unique $n \ge 0$ satisfying $a_n < \overline{G}(t) < a_{n-1}$ (where $a_{-1} := 1$). Then from the definition of the $a_{n}$ we  obtain
\begin{equation*}
\overline{G}(te^{-(n+1)}) = g_{N,\vartheta}^{\circ(n+1)}(\overline{G}(t)) 
\in g_{N,\vartheta}((a_0,a_{-1})) \subseteq (1,\infty),
\end{equation*}
which is obviously impossible.
Thus, $\overline{G}(t) \in \{1\} \cup \{a_n: \, n \geq 0\}$ for all $t \geq 0$. Finally,
put $c\gl\sup\{t\ge 0:\overline{G}(t)=1\}$, thus $\overline{G}(c)=1$, and use once again Eq.\ \eqref{Fte^-1} and the recursive definition of the $a_{n}$ to obtain $G=F(c\,\cdot)$. Further details are omitted.
\end{proof}

\section{Related results for the smoothing transformation} \label{ConRem}

We have already pointed out in the Introduction that equations \eqref{MinFP1} and \eqref{SumFP1}, and thus also the associated maps $M$ in \eqref{MapFP} and $M_{\Sigma}$ in \eqref{MapST},  are naturally connected via the functional equation (see \eqref{MinFP2})
$$ g(t)\ =\ \mathds{E}\prod_{i\ge 1}g(tT_{i}),\quad t\ge 0, $$
valid for all survival functions $g=\overline{F}$ of solutions $F$ to
\eqref{MinFP1} and for Laplace transforms $g=\varphi$ of solutions to
\eqref{SumFP1}. The connection is even closer owing to the fact that any
Laplace transform vanishing at $\infty$ is also the survival function of a distribution on $[0,\infty)$. It is therefore not surprising that our results stated in Section 4 have corresponding versions for the additive case dealing with the smoothing transform $M_{\Sigma}$ and its fixed points. The latter has been studied in a large number of articles, see \textit{e.g.}\ \cite{DL}, \cite{Liu}, \cite{Cal2}, \cite{Ik}, \cite{BK3} and \cite{AR2}.

In order to formulate the counterparts of Theorem \ref{alpha-elementar-Charakterisierung} and Theorem \ref{alpha-elementar-Darstellung} for
$M_{\Sigma}$, we first recall from \cite{DL} the definition of $\alpha$-stable laws and their $r$-periodic extensions which take here the role of the Weibull distributions in the min-case.

\begin{Def}[\textit{cf.}\ \cite{DL}, p.\,280] \label{periodicstable} \rm
For $\alpha > 0$ and $r>1$, let $\mathfrak{P}(r,\alpha)$ be the set of multiplicativley $r$-periodic functions $p:\mathds{R}^{+} \rightarrow \mathds{R}^{+}$ such that $p(t)t^{\alpha}$ has a completely monotone derivative.
Then, for $p \in \mathfrak{P}(r,\alpha)$, the \emph{$r$-periodic $\alpha$-stable law} $r$-$\mathcal{S}(p,\alpha)$ is defined as the distribution on $[0,\infty)$ with Laplace transform $\varphi(t) = e^{-p(t) t^{\alpha}}$ ($t > 0$).
\end{Def}

Note that $\varphi$ really defines a Laplace transform by Criterion 2 on page 441 of \cite{Fe}, for $t \mapsto p(t)t^{\alpha}$ is positive with completely monotone derivative. Furthermore, if $p(t)\equiv c$, then $\varphi$ is the Laplace transform of a positive stable law with scale parameter $(c/\cos(\pi \alpha/2))^{1/\alpha}$, shift $0$ and index of stability $\alpha$ (\textit{cf.}\ \cite{ST}, Definition 1.1.6 and Proposition 1.2.12). This distribution, denoted as $\mathcal{S}(c,\alpha)$, does not depend on $r$ (which is thus dropped in
the notation). For our convenience, we also define $r$-$\mathcal{S}(0,\alpha) =\mathcal{S}(0,\alpha) \gl \delta_0$.

We continue with a short proof of the known fact that periodic stable laws as defined above only exist for $\alpha\le 1$.

\begin{Lemma} \label{stable}
In the situation of Definition \ref{periodicstable}, any element of $\mathfrak{P}(r,1)$ is constant, while $\mathfrak{P}(r,\alpha)=\emptyset$ for any $\alpha
>1$.
\end{Lemma}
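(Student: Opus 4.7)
The plan is to work with the auxiliary function $f(t) \gl p(t)t^{\alpha}$, whose derivative $f'$ is completely monotone by hypothesis, and to exploit together the multiplicative $r$-periodicity of $p$ and the monotonicity built into complete monotonicity.

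First I would use the periodicity $p(rt)=p(t)$ to derive the scaling identity
\begin{equation*}
f(rt) \ =\ p(rt)(rt)^{\alpha}\ =\ r^{\alpha}\,p(t)t^{\alpha}\ =\ r^{\alpha}f(t),\quad t>0.
\end{equation*}
Differentiating in $t$ yields $r\,f'(rt)=r^{\alpha}f'(t)$, hence
\begin{equation*}
f'(rt)\ =\ r^{\alpha-1}f'(t),\quad t>0.
\end{equation*}
Now $f'$ being completely monotone forces $-f''\ge 0$, so $f'$ is nonincreasing on $(0,\infty)$; together with $r>1$ this gives $f'(rt)\le f'(t)$.

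For the case $\alpha>1$ we then have $r^{\alpha-1}f'(t)\le f'(t)$ for all $t>0$, which together with $r^{\alpha-1}>1$ forces $f'(t)\le 0$, hence $f'\equiv 0$ (being both nonnegative, as a CM function, and nonpositive). Thus $f$ is constant. Writing $f(t)=p(t)t^{\alpha}=c$ with $c\ge 0$, the requirement that $p:\mathds{R}^{+}\to\mathds{R}^{+}$ rules out $c=0$; and for $c>0$ the function $p(t)=c\,t^{-\alpha}$ fails to be multiplicatively $r$-periodic since $p(rt)=r^{-\alpha}p(t)\ne p(t)$. This contradiction shows $\mathfrak{P}(r,\alpha)=\emptyset$.

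For the case $\alpha=1$ the scaling identity reduces to $f'(rt)=f'(t)$. Combined with the fact that $f'$ is nonincreasing, iteration yields $f'(t)=f'(r^{n}t)$ with $r^{n}t\to\infty$, so $f'$ must be constant on each orbit and hence (by $r$-periodicity and nonincreasingness) globally constant, say $f'\equiv c\ge 0$. Integrating gives $f(t)=ct+C$ for some $C\in\mathds{R}$, so $p(t)=c+C/t$. The periodicity $p(rt)=p(t)$ then forces $C=0$, proving that $p\equiv c$ is constant. The main subtlety is the book-keeping in this final step, namely making sure that the ``constant of integration'' $C$ is forced to vanish by periodicity rather than being absorbed into the periodic part --- but this is immediate since $C/t$ is not multiplicatively $r$-periodic for $r>1$ unless $C=0$.
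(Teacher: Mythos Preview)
Your argument is correct and, in fact, slightly more direct than the paper's. Both approaches hinge on extracting a monotonicity statement that is incompatible with the multiplicative $r$-periodicity of $p$, but they reach it by different routes.

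The paper works with the Laplace transform $\varphi(t)=e^{-p(t)t^{\alpha}}$ and writes
\[
p(s)\ =\ \lim_{n\to\infty}\frac{1-\varphi(sr^{-n})}{(sr^{-n})^{\alpha}}
\ =\ \lim_{n\to\infty}\frac{1-\varphi(sr^{-n})}{sr^{-n}}\cdot (sr^{-n})^{1-\alpha},
\]
then uses convexity of $\varphi$ (as a Laplace transform) to see that the first factor is nonincreasing in $s$, while the second is nonincreasing when $\alpha\ge 1$ and strictly decreasing when $\alpha>1$; the resulting monotonicity of $p$ on $(1,r]$ clashes with periodicity. Your route bypasses $\varphi$ entirely: from the scaling identity $f(rt)=r^{\alpha}f(t)$ you get $f'(rt)=r^{\alpha-1}f'(t)$ directly, and then use only that a completely monotone $f'$ is nonnegative and nonincreasing. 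This is cleaner in that it uses no more than the definition of $\mathfrak{P}(r,\alpha)$ and basic calculus; the paper's argument, on the other hand, stays closer to the probabilistic interpretation of $\varphi$ that motivates the definition in the first place. Both are short and valid.
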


\begin{proof}
If $\alpha \geq 1$, we have for each $s \in (1,r]$
$$ p(s)
  = \lim_{n \to \infty} \frac{1-e^{-p(sr^{-n}) (sr^{-n})^{\alpha}}}{(sr^{-n})^{\alpha}}
  = \lim_{n \to \infty} \frac{1-e^{-p(sr^{-n}) (sr^{-n})^{\alpha}}}{(sr^{-n})} \cdot (sr^{-n})^{1-\alpha}. $$
Since $e^{-p(t)t^{\alpha}}$ is convex and $t^{1-\alpha}$ nonincreasing if
$\alpha\ge 1$, we see that $p$ must be nonincreasing in $s$. In fact, $p$ is even strictly decreasing if $\alpha > 1$. But in view of the periodicity of $p$, 
the latter is impossible, thus $\mathfrak{P}(r,\alpha)=\emptyset$ for
$\alpha>1$, while $p$ must be constant in the case $\alpha = 1$.
\end{proof}

Our next definition is just the canonical modification of Definition \ref{Wrnualpha} for the previously defined generalized stable laws.

\begin{Def} \label{Srnualpha} \rm
Let $\alpha\in (0,1]$ and $\Lambda$ be a probability measure on $[0,\infty)$.
Then
\begin{itemize}
 \item[(a)]
 $\mathcal{S}_{\Lambda}(1,\alpha)$ denotes the class of 
 $\Lambda$-mixtures of positive $\alpha$-stable laws $F$ of the form
 $ F(\cdot) = \int \mathcal{S}(yc,\alpha)(\cdot) \, \Lambda(dy),  $
  where $c > 0$.
 \item[(b)]
 $\mathcal{S}_{\Lambda}(r,\alpha)$ for $r>1$ denotes the class of   $\Lambda$-mixtures of $r$-$\mathcal{S}(p,\alpha)$ distributions   $F$ of the form
 $ F(\cdot) = \int r\text{-}\mathcal{S}(yp,\alpha)(\cdot)\,\Lambda(dy), $
 where $p \in \mathfrak{P}(r,\alpha)$.
\end{itemize}
\end{Def}

Finally, the notions "$\alpha$-bounded", "$\alpha$-regular" and "$\alpha$-elementary" for fixed points of $M_{\Sigma}$ are defined exactly as in Definition \ref{fixptype} for fixed points of $M$, when substituting $D_{\alpha}\overline{F}$ with $t^{-\alpha}(1-\varphi(t))$, $\varphi$ the Laplace transform
of $F$. The respective classes are denoted as $\mathfrak{F}_{\Sigma,b}^{\alpha},\mathfrak{F}_{\Sigma,r}^{\alpha}$ and $\mathfrak{F}_{\Sigma,e}^{\alpha}$, respectively.

We are now ready to formulate the results corresponding to our theorems
in Section \ref{MainResults} for the smoothing transform. The standing assumptions \eqref{A1}
and \eqref{A2} for the min-case can be replaced here with the weaker ones 
\begin{equation}  \tag{A3}\label{A3}
 \text{Supercriticality:}\hspace{3.8cm}\mathds{E}N>1;
 \hspace{1cm}
\end{equation}
\begin{equation}  \tag{A4}\label{A4}
 \text{Nondegeneracy:}\hspace{1.8cm}\mathds{P}(T\in\{0,1\}  ^{\infty}\}<1.\hspace{1cm}
\end{equation}

\begin{Theorem} \label{alpha-elementar-Charakterisierung2}
Suppose \eqref{A3} and \eqref{A4}. Then the following assertions are equivalent for any $\alpha>0$:
\begin{itemize}
 \item[(a)]  Equation \eqref{SumFP1} has an $\alpha$-elementary      solution
    $(\mathfrak{F}_{\Sigma,e}^{\alpha}\ne\emptyset)$.
 \item[(b)]  Equation \eqref{SumFP1} has an $\alpha$-regular solution
    $(\mathfrak{F}_{\Sigma,r}^{\alpha}\ne\emptyset)$.

 \item[(c)]  $\alpha\in (0,1]$, $m(\alpha)=1$ and $\mathds{P}(W^{(\alpha)}>0)>0$.
 \item[(d)]  $\alpha\in (0,1]$, $m(\alpha)=1$, the random walk $     (\overline{S}_{\alpha,n})_{n \geq 0}$  converges to      $\infty$ a.s.\ and
    $$ \int_{(1,\infty)} \left[\frac{u \log u}
    {\mathds{E}(\overline{S}_{\alpha,1}^+ \wedge \log u)} \right] 
    \mathds{P} (W_1^{(\alpha)} \in \, du) < \infty. $$
\end{itemize}
\end{Theorem}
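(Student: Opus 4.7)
The plan is to mirror the proof of Theorem \ref{alpha-elementar-Charakterisierung}, translating each step from survival functions $\overline{F}$ to Laplace transforms $\varphi$. The translation is natural since both satisfy the common functional equation $g(t)=\mathds{E}\prod_{i\ge 1}g(tT_i)$. As before, (a) $\Rightarrow$ (b) is trivial from the inclusion $\mathfrak{F}_{\Sigma,e}^{\alpha}\subseteq\mathfrak{F}_{\Sigma,r}^{\alpha}$ (same argument as for $\mathfrak{F}_{\wedge}$), and (c) $\Leftrightarrow$ (d) is immediate from Biggins' martingale limit theorem (Theorem \ref{Biggins}).

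For (c) $\Rightarrow$ (a) I would exhibit an $\alpha$-elementary solution directly. Since $\alpha\in(0,1]$, the map $s\mapsto s^{\alpha}$ is a Bernstein function, and thus $\varphi(t)\gl\varphi_{\alpha}(t^{\alpha})$ is itself the Laplace transform of a probability measure on $[0,\infty)$, namely an element of $\mathcal{S}_{\Lambda_{\alpha}}(1,\alpha)$ in the continuous case and of $\mathcal{S}_{\Lambda_{\alpha}}(r,\alpha)$ in the $r$-geometric case. Using $\Lambda_{\alpha}\in\mathfrak{F}_{\Sigma}(T^{(\alpha)})$, one computes $\mathds{E}\prod_i\varphi(tT_i)=\mathds{E}\prod_i\varphi_{\alpha}(t^{\alpha}T_i^{\alpha})=\varphi_{\alpha}(t^{\alpha})=\varphi(t)$, so $\varphi$ solves \eqref{SumFP1}. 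Moreover $t^{-\alpha}(1-\varphi(t))\to -\varphi_{\alpha}'(0)=\mathds{E}W^{(\alpha)}=1$ as $t\downarrow 0$, yielding $\alpha$-elementarity.

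The substantive direction is (b) $\Rightarrow$ (c), which I would split into three steps. First, $\alpha\le 1$ is necessary: for a nontrivial Laplace transform $\varphi$, monotone convergence gives $\lim_{t\downarrow 0}(1-\varphi(t))/t=\int x\,dF(x)\in(0,\infty]$, so $\alpha>1$ forces $\limsup_{t\downarrow 0}t^{-\alpha}(1-\varphi(t))=\infty$, contradicting $\alpha$-regularity. Second, I would mimic Lemma \ref{Iksanowtrick} to get $m(\alpha)\le 1$: Fatou applied to the iterated identity $1=\mathds{E}\sum_{|v|=n}L(v)^{\alpha}\,D_{\alpha}\varphi(tL(v))/D_{\alpha}\varphi(t)\,\Pi_v$, with $\Pi_v$ the appropriate partial product arising from $n$-fold iteration of the SFPE, yields $m(\alpha)^n\le c_2/c_1$ for every $n$. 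Third, following Proposition \ref{suffcharacexp}, the Biggins--Kyprianou multiplicative martingale $\prod_{|v|=n}\varphi(tL(v))$ has an a.s.\ limit $\varphi_{\infty}(t)$ with $\mathds{E}\varphi_{\infty}(t)=\varphi(t)$. Combining $\alpha$-regularity, the asymptotic $-\log\varphi\sim 1-\varphi$ at $1$, and $\sup_{|v|=n}L(v)\to 0$ (Lemma \ref{Transienz}), one obtains $-\log\varphi_{\infty}(t)\asymp t^{\alpha}W_n^{(\alpha)}$, hence $\liminf_n W_n^{(\alpha)}>0$ on the positive-probability event $\{\varphi_{\infty}(t)<1\}$. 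Theorem \ref{Biggins} then upgrades this to $m(\alpha)=1$ and $\mathds{E}W^{(\alpha)}=1$.

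The main obstacle is handling the weaker standing assumptions \eqref{A3}--\eqref{A4} in place of \eqref{A1}--\eqref{A2}. Since \eqref{A3} only gives supercriticality rather than the a.s.\ survival guaranteed by \eqref{A1}, certain pointwise statements borrowed from the min-case analysis (e.g., Lemmas \ref{Transienz} and \ref{OtalphaleqWalpha}) must be re-examined and, where necessary, restricted to the survival event of the Galton--Watson tree. Because the target conclusion $\mathds{P}(W^{(\alpha)}>0)>0$ is itself a statement about a positive-probability subset of the survival event, this localization is technically routine and does not alter the overall structure of the argument.
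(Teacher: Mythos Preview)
Your proposal is correct and mirrors the paper's own approach, which states explicitly that the proofs ``are essentially the same as for Theorems \ref{alpha-elementar-Charakterisierung} and \ref{alpha-elementar-Darstellung} except for the additional assertion $\alpha\le 1$.'' The one genuine divergence is how you obtain $\alpha\le 1$. The paper first establishes $m(\alpha)=1$ and $\mathds{E}W^{(\alpha)}=1$ (via the Laplace-transform analogs of Lemma \ref{Iksanowtrick} and Proposition \ref{suffcharacexp}), then invokes the representation $\varphi(t)=\varphi_{\alpha}(p(t)t^{\alpha})$ coming from the disintegration argument, verifies that $p\in\mathfrak{P}(r,\alpha)$ by a limiting argument, and finally appeals to Lemma \ref{stable} to conclude $\alpha\le 1$. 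Your route is more elementary and self-contained: the monotone-convergence identity $\lim_{t\downarrow 0}t^{-1}(1-\varphi(t))=\mathds{E}X\in(0,\infty]$ (valid for any nontrivial Laplace transform) immediately forces $\limsup_{t\downarrow 0}t^{-\alpha}(1-\varphi(t))=\infty$ whenever $\alpha>1$, so no $\alpha$-bounded fixed point can exist. This bypasses the representation machinery entirely. The paper's detour has the incidental benefit of simultaneously establishing the structure needed for Theorem \ref{alpha-elementar-Darstellung2}, but for Theorem \ref{alpha-elementar-Charakterisierung2} alone your argument is cleaner and logically prior. Your remark about localizing to the survival set under \eqref{A3}--\eqref{A4} is also more careful than the paper, which passes over this point in silence.
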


\begin{Theorem} \label{alpha-elementar-Darstellung2}
Suppose \eqref{A3}, \eqref{A4} and that $m(\alpha)=1$ and $\mathds{E}W^{(\alpha)}=1$ for some $\alpha\in (0,1]$. Then
\begin{equation*}
 \mathfrak{F}_{\Sigma,b}^{\alpha} = \mathfrak{F}_{\Sigma,r}^{\alpha}  = \mathfrak{F}_{\Sigma,e}^{\alpha} = \mathcal{S}_{\Lambda_{\alpha}} (d,\alpha),
 \end{equation*}
where $d=r>1$ in the $r$-geome\-tric case $(\G(T)=r^{\mathds{Z}})$
and $d=1$ in the continuous case $(\G(T)=\mathds{R}^{+})$.
\end{Theorem}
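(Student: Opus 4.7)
The overall strategy is to mirror the proof of Theorem~\ref{alpha-elementar-Darstellung} by exploiting the fact that a Laplace transform $\varphi$ of a positive solution to \eqref{SumFP1} satisfies exactly the same functional equation $\varphi(t) = \mathds{E}\prod_{i\ge 1}\varphi(tT_{i})$ as the survival function $\overline{F}$ of a solution to \eqref{MinFP1}. Hence the entire disintegration and pathwise renewal equation machinery from Sections~\ref{DisintReq} and \ref{FRE} transfers verbatim, with $\overline{F}$ replaced by $\varphi$. The only essentially new analytic step is to verify that the resulting periodic factor has the \emph{complete monotonicity} property needed to identify the fixed point as a mixture of periodic $\alpha$-stable laws rather than of periodic Weibulls.

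First I would establish the easy inclusion $\mathcal{S}_{\Lambda_{\alpha}}(d,\alpha)\subseteq\mathfrak{F}_{\Sigma,e}^{\alpha}$ by direct computation analogous to Lemma~\ref{WrnualphainFinf}. In the $r$-geometric case, an element $F\in\mathcal{S}_{\Lambda_{\alpha}}(r,\alpha)$ has Laplace transform $\varphi(t)=\varphi_{\alpha}(p(t)t^{\alpha})$ for some $p\in\mathfrak{P}(r,\alpha)$, and the multiplicative $r$-periodicity of $p$ together with the identity $W^{(\alpha)}=\sum_{i\ge 1}T_{i}^{\alpha}[W^{(\alpha)}]_{i}$ in distribution (valid under (c) of Theorem~\ref{alpha-elementar-Charakterisierung2}) yields $\mathds{E}\prod_{i}\varphi(tT_{i})=\varphi_{\alpha}(p(t)t^{\alpha})=\varphi(t)$; $\alpha$-elementarity is immediate from $\lim_{t\downarrow 0}t^{-\alpha}(1-\varphi(t))=\lim_{t\downarrow 0}p(t)$ on each residue class. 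The inclusions $\mathfrak{F}_{\Sigma,e}^{\alpha}\subseteq\mathfrak{F}_{\Sigma,r}^{\alpha}\subseteq\mathfrak{F}_{\Sigma,b}^{\alpha}$ are formal.

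Next I would handle the main inclusion $\mathfrak{F}_{\Sigma,b}^{\alpha}\subseteq\mathcal{S}_{\Lambda_{\alpha}}(d,\alpha)$. Given $F\in\mathfrak{F}_{\Sigma,b}^{\alpha}$ with Laplace transform $\varphi$, define the multiplicative martingale $\overline{\mathcal{F}}_{n}(t)\gl\prod_{|v|=n}\varphi(tL(v))$, its a.s.\ limit $\overline{\mathcal{F}}(t)$, and the transformed process $\Psi(t)\gl e^{-\alpha t}(-\log\overline{\mathcal{F}}(e^{t}))$. The $\alpha$-boundedness hypothesis gives $1-\varphi(t)\le Ct^{\alpha}/2$ for small $t$, so the analog of Lemma~\ref{OtalphaleqWalpha} yields $\Psi(t)\le CW^{(\alpha)}$ a.s.\ and $\Psi$ inherits left-continuity and product measurability from $\varphi$. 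The pathwise renewal equation \eqref{FPFRE} holds for $\Psi$ by the disintegration argument, and Theorem~\ref{PRE} applies: $\Psi(t)=p(t)W^{(\alpha)}$ a.s.\ for a measurable $(\log r)$-periodic $p$ (respectively a positive constant in the continuous case). Writing $h(t)\gl p(\log t)$ and taking expectations, we arrive at
\begin{equation*}
\varphi(t)\ =\ \mathds{E}\exp\!\bigl(-h(t)t^{\alpha}W^{(\alpha)}\bigr)\ =\ \varphi_{\alpha}\!\bigl(h(t)t^{\alpha}\bigr),\quad t>0,
\end{equation*}
with $h$ multiplicatively $r$-periodic in the arithmetic case.

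The main obstacle, not present in the min-case, is to show $h\in\mathfrak{P}(r,\alpha)$, that is, $\psi(t)\gl h(t)t^{\alpha}$ is a Bernstein function (positive with completely monotone derivative). Since $\varphi_{\alpha}$ is completely monotone and non-degenerate (both $\mathds{P}(W^{(\alpha)}>0)>0$ and $\mathds{E}W^{(\alpha)}=1$) and $\varphi=\varphi_{\alpha}\circ\psi$ is completely monotone as a Laplace transform, the classical Bernstein-function composition characterization (see e.g.\ the standard Levy--Khintchine/Bernstein function theory, or Feller~\cite{Fe}, XIII.7) forces $\psi$ to be a Bernstein function; this is where the restriction $\alpha\in(0,1]$ enters via Lemma~\ref{stable}, since $t^{\alpha}$ itself is Bernstein only for $\alpha\le 1$, excluding any non-trivial periodic $\psi$ otherwise. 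Monotonicity and left continuity of $h\cdot t^{\alpha}$ follow as in the min-case from the corresponding properties of $\varphi$. Once $h\in\mathfrak{P}(r,\alpha)$ (or $h$ is a positive constant in the continuous case) is established, the representation $\varphi(t)=\int e^{-yh(t)t^{\alpha}}\,\Lambda_{\alpha}(dy)$ identifies $F$ as a $\Lambda_{\alpha}$-mixture of $r\text{-}\mathcal{S}(yh,\alpha)$ laws, completing the proof.
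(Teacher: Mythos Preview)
Your overall strategy---carrying the disintegration and pathwise renewal equation machinery over from the min-case to Laplace transforms---is exactly what the paper does, and the argument up through the representation $\varphi(t)=\varphi_{\alpha}(h(t)t^{\alpha})$ is correct and matches the paper.

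The gap is in your final step, showing $h\in\mathfrak{P}(r,\alpha)$ in the $r$-geometric case. The ``Bernstein-function composition characterization'' you invoke does not hold in the direction you need: from $\varphi_{\alpha}\circ\psi$ being completely monotone for a \emph{single} completely monotone $\varphi_{\alpha}$ one cannot conclude that $\psi$ is Bernstein. A concrete counterexample is $\varphi_{\alpha}(s)=(1+s)^{-1}$ (the Laplace transform of an exponential law) and $\psi(t)=e^{t}-1$: then $\varphi_{\alpha}(\psi(t))=e^{-t}$ is completely monotone, yet $\psi'(t)=e^{t}$ is certainly not. The standard characterization (e.g.\ Schilling--Song--Vondra\v{c}ek, Thm.~3.7) requires $f\circ\psi$ to be completely monotone for \emph{every} completely monotone $f$, and Feller~\cite{Fe}, XIII.7 does not contain the one-sided converse you cite. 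Non-degeneracy of $\Lambda_{\alpha}$ does not rescue the argument as stated.

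The paper closes this gap by a different, more elementary route that exploits the periodicity you have already established. Using the multiplicative $r$-periodicity of $h$ together with $\varphi_{\alpha}'(0)=-1$, one has
\[
h(t)\,t^{\alpha}\;=\;\lim_{n\to\infty} r^{\alpha n}\bigl(1-\varphi(t r^{-n})\bigr),
\]
uniformly on compact sets. Each approximant $t\mapsto r^{\alpha n}(1-\varphi(tr^{-n}))$ has completely monotone derivative, because $-\varphi'$ is completely monotone (as $\varphi$ is a Laplace transform) and complete monotonicity is preserved under positive dilation and scaling. The limit therefore also has completely monotone derivative, which gives $h\in\mathfrak{P}(r,\alpha)$ directly and then, via Lemma~\ref{stable}, the restriction $\alpha\le 1$.
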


The proofs of the previous two theorems are essentially the same as for Theorems \ref{alpha-elementar-Charakterisierung} and \ref{alpha-elementar-Darstellung} except for the additional assertion $\alpha \leq 1$. But the arguments in the proof of Theorem \ref{alpha-elementar-Charakterisierung} show that the Laplace transform $\varphi$ of any element of $\mathfrak{F}_{\Sigma,b}^{\alpha}$ can be written as $\varphi(t) = \varphi_{\alpha}(p(t) t^{\alpha})$ ($t > 0$), where $p$ is a multiplicatively $r$-periodic function in the $r$-geometric case and a constant in the continuous case. Owing to Lemma \ref{stable} we get $\alpha \leq 1$ if we can prove that $p \in \mathfrak{P}(r,\alpha)$ in the $r$-geometric case. By writing $p(t) = \varphi^{\circ(-1)} (\varphi_{\alpha}(t)) \cdot t^{-\alpha}$ ($t > 0$) (where $\varphi^{\circ(-1)}$ denotes the inverse function of $\varphi$), we see that $p$ is infinitely often differentiable. It remains to verify that $t \mapsto p(t) t^{\alpha}$ has a completely monotone derivative. To this end, we observe that
\begin{eqnarray*}
r^{\alpha n} (1-\varphi(t r^{-n}))
& = &
\frac{1-\varphi_{\alpha}(p(t r^{-n}) (t r^{-n})^{\alpha})}{p(t r^{-n}) \cdot (t r^{-n})^{\alpha}} \cdot p(t) t^{\alpha} \\
& \to &
p(t) t^{\alpha} \qquad (n \to \infty),
\end{eqnarray*}
for $n \in \mathds{N}$
having utilized that $\varphi_{\alpha}'(0) = - \mathds{E} W^{(\alpha)} = -1$. Since $t \mapsto r^{\alpha n} (1-\varphi(t r^{-n}))$ has a completely monotone derivative for each $n \in \mathds{N}$ and since the convergence is uniform on compact sets, $t \mapsto p(t)t^{\alpha}$ has a completely monotone derivative as well.

\begin{appendix}
\section{Appendix: Biggins' Theorem} 

The assumptions \eqref{A3} and \eqref{A4} are in force throughout. Let $q\in [0,1)$
denote the extinction probability of $W_{n}^{(0)}=\sum_{|v|=n}\mathds{1}_{\{L(v)
>0\}},\,n\ge 0$ (an ordinary supercritical Galton-Watson process if $N=W_{1}
^{0)}<\infty$ a.s.).

The subsequent characterization theorem for martingale limits in branching random walks (which are nothing but limits of WBP's having the martingale property) is a crucial ingredient to our analysis of $\alpha$-elementary fixed points. In the stated most general form, which imposes no conditions on $T$ beyond $m(\alpha)=1$, it was recently obtained by Alsmeyer and Iksanov \cite{AI}, but the first version of the result under additional assumptions on $T$ was obtained more than three decades ago by Biggins \cite{Bi} and later reproved (under slightly relaxed conditions) by Lyons \cite{Ly} via a measure-change argument (size-biasing). Alsmeyer and Iksanov combined Lyons' argument with Goldie and Maller's results on perpetuities \cite{GM}. 

\begin{Theorem}[\textit{cf.}\ \cite{AI}, Theorem 1.4] \label{Biggins}
Suppose \eqref{A3}, \eqref{A4}, and $m(\alpha)=1$. Then the following four assertions are equivalent:
\begin{itemize}
 \item[(a)]  $\mathds{P}(W^{(\alpha)} > 0) > 0$.
 \item[(b)]  $\mathds{P}(W^{(\alpha)} > 0) = q$.
 \item[(c)]  $\mathds{E} W^{(\alpha)} = 1$.
 \item[(d)]  $\lim_{n \to \infty} \overline{S}_{\alpha,n} = \infty$ a.s. and
    $$ \int_{(1,\infty)} \left[\frac{u \log u}
    {\mathds{E}(\overline{S}_{\alpha,1}^+ \wedge \log u)} \right] 
    \mathds{P} (W_1^{(\alpha)} \in \, du) < \infty. $$
\end{itemize}
\end{Theorem}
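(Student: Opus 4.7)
The plan is to split the four-way equivalence into three stages: the trivial step (c) $\Rightarrow$ (a); the $0$-$1$ dichotomy (a) $\Leftrightarrow$ (b); and the deep equivalence (a) $\Leftrightarrow$ (c) $\Leftrightarrow$ (d), which I would obtain by combining Lyons' size-biased measure change for branching random walks with the Goldie-Maller criterion for finiteness of perpetuities \cite{GM}. The direction (c) $\Rightarrow$ (a) is immediate, since $\mathds{E}W^{(\alpha)}=1$ forces $\mathds{P}(W^{(\alpha)}>0)>0$.

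For the $0$-$1$ dichotomy, the a.s.\ recursion $W^{(\alpha)} = \sum_{i=1}^{N} T_i^{\alpha}\,[W^{(\alpha)}]_i$, inherited by passage to the limit in $W_{n+1}^{(\alpha)} = \sum_{i}T_i^{\alpha}\,[W_n^{(\alpha)}]_i$, shows, by conditioning on $T$ and exploiting independence of the copies $[W^{(\alpha)}]_i$, that $p \gl \mathds{P}(W^{(\alpha)}=0)$ satisfies $p=\mathds{E}p^N$. Under \eqref{A3}, the p.g.f.\ of $N$ has exactly the two fixed points $q\in[0,1)$ and $1$ in $[0,1]$, so $p\in\{q,1\}$, which gives (a) $\Leftrightarrow$ (b).

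To establish (a) $\Rightarrow$ (c), I would construct on an enlarged space the size-biased measure $\widehat{\mathds{P}}$ whose restriction to $\mathcal{A}_n$ has density $W_n^{(\alpha)}$ with respect to $\mathds{P}$, together with a distinguished spine $(v_n)_{n\ge 0}$ obtained by selecting child $i$ of $v_n$ with conditional probability $T_i^{\alpha}(v_n)$ (normalized by $m(\alpha)=1$). Under $\widehat{\mathds{P}}$, the spine walk $\overline{S}_{\alpha,n}=-\log L(v_n)$ has i.i.d.\ increments of law $\Sigma_{\alpha,1}$, and the off-spine subtrees are independent copies of the original WBP. The standard Lyons dichotomy then identifies (c) with $W^{(\alpha)}<\infty$ $\widehat{\mathds{P}}$-a.s., and its failure with $\mathds{P}(W^{(\alpha)}=0)=1$; combined with (a) $\Leftrightarrow$ (b), this yields (a) $\Leftrightarrow$ (c). Iterating the recursion along the spine produces the representation $W^{(\alpha)}=\sum_{k\ge 0} e^{-\alpha\overline{S}_{\alpha,k}}\,Y_k$, where the $Y_k$ are i.i.d.\ copies of the off-spine contribution to $W_1^{(\alpha)}$ and independent of the driver $(\overline{S}_{\alpha,n})$, realizing $W^{(\alpha)}$ as a perpetuity.

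The hardest step is the equivalence (c) $\Leftrightarrow$ (d), which I would obtain by applying Goldie and Maller's criterion \cite{GM} to this perpetuity under $\widehat{\mathds{P}}$: the sum is $\widehat{\mathds{P}}$-a.s.\ finite exactly when $\overline{S}_{\alpha,n}\to\infty$ a.s.\ and the integral in (d) converges. The main obstacle is that the classical $L\log L$ criterion (as in Biggins \cite{Bi} or the original Lyons argument \cite{Ly}) is insufficient whenever $\mathds{E}\overline{S}_{\alpha,1}$ fails to exist or is nonpositive, a regime which is not excluded by \eqref{A3}, \eqref{A4} and $m(\alpha)=1$ alone; it is precisely here that the Goldie-Maller truncated-mean formulation $\mathds{E}(\overline{S}_{\alpha,1}^{+}\wedge \log u)$ enters and replaces the mean. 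A careful verification of the hypotheses of \cite{GM} for the innovation $Y_0$, whose distribution is essentially that of $W_1^{(\alpha)}$ under $\widehat{\mathds{P}}$, together with the transience of the spine walk to $+\infty$, completes the proof.
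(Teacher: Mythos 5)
This theorem is not proved in the paper at all: it is stated as a citation of Alsmeyer and Iksanov \cite{AI}, Theorem~1.4, and the Appendix text merely records that the argument there combines Lyons' size-biased change of measure with Goldie and Maller's criterion for perpetuities. Your outline reconstructs precisely that route --- the spine construction under $\widehat{\mathds{P}}$ with density $W_n^{(\alpha)}$ on $\mathcal{A}_n$, the identification of $W^{(\alpha)}$ as the perpetuity $\sum_{k\ge 0} e^{-\alpha\overline{S}_{\alpha,k}}Y_k$ with spine increments distributed as $\Sigma_{\alpha,1}$, and the replacement of the classical $L\log L$ condition (which presupposes existence of the mean of $\overline{S}_{\alpha,1}$) by the truncated-mean Goldie--Maller formulation appearing in (d) --- so it is consistent with the cited source and correctly identifies why the generality of \cite{GM} is essential when $\mathds{E}\overline{S}_{\alpha,1}$ need not exist. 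Your handling of (a)~$\Leftrightarrow$~(b) via the fixed-point equation $p=\mathds{E}p^N$ for $p=\mathds{P}(W^{(\alpha)}=0)$ is also the standard argument; one small point worth flagging is that (b) as printed in the paper reads $\mathds{P}(W^{(\alpha)}>0)=q$, whereas the quantity that the dichotomy identifies with $q$ (and which appears in \cite{AI}) is $\mathds{P}(W^{(\alpha)}=0)$ --- your reasoning implicitly uses the latter, which is the correct reading.
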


We further state the following corollary which says that $m(\alpha)=1$ and $\mathds{P}(W^{(\alpha)}>0)>0$ always imply that $\alpha$ equals the characteristic exponent of $T$, that is, the \emph{minimal} value at which $m$ equals 1. This is relevant to be pointed out because
$m$, as a strictly convex function on $\{\beta\ge 0:m(\beta)<\infty\}$,
may equal 1 for two values $\alpha_{1},\alpha_{2}$.

\begin{Cor} \label{charcharexp}
Suppose $m(\alpha)=1$ and $\mathds{P}(W^{(\alpha)}>0)>0$.
Then $m(\beta)>1$ for all $\beta\in [0,\alpha)$ so that $\alpha$ is the characteristic exponent of $T$.
\end{Cor}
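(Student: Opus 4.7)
The plan is to argue by contradiction: assume there exists some $\beta_{0}\in[0,\alpha)$ with $m(\beta_{0})\le 1$, and derive a contradiction by comparing $W_{n}^{(\beta)}$ with $W_{n}^{(\alpha)}$. The case $\beta=0$ is trivial since $m(0)=\mathds{E}N>1$ by \eqref{A3}, so I may assume $\beta_{0}\in(0,\alpha)$. First I would record strict convexity of $m$: the formal second derivative $m''(\beta)=\mathds{E}\sum_{i\ge 1}T_{i}^{\beta}(\log T_{i})^{2}$ is strictly positive because \eqref{A4} forces some $T_{i}$ to take values outside $\{0,1\}$ with positive probability. Combining this with $m(0)>1$ and $m(\alpha)=1$, I can locate a $\gamma\in(0,\alpha)$ with $m(\gamma)<1$: if $m(\beta_{0})<1$, take $\gamma=\beta_{0}$; if $m(\beta_{0})=1$, strict convexity forces $m<1$ on the entire open segment $(\beta_{0},\alpha)$, and any point there works.

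Next I exploit the hypothesis through Theorem \ref{Biggins}: from $m(\alpha)=1$ and $\mathds{P}(W^{(\alpha)}>0)>0$ I obtain $\mathds{E}W^{(\alpha)}=1$, so that the event $A=\{W^{(\alpha)}>0\}$ has positive probability. Lemma \ref{Transienz} (applicable since $m(\alpha)=1$) yields $R_{n}=\sup_{|v|=n}L(v)\to 0$ a.s., so for almost every $\omega$ there is $n_{0}(\omega)$ with $L(v)(\omega)<1$ for every $|v|=n\ge n_{0}(\omega)$. For such $v$ with $L(v)>0$ and for $\gamma<\alpha$, the inequality $L(v)^{\gamma}>L(v)^{\alpha}$ holds, whereas $L(v)=0$ contributes nothing to either side. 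Summing gives $W_{n}^{(\gamma)}\ge W_{n}^{(\alpha)}$ for all $n\ge n_{0}(\omega)$, hence
\[
\liminf_{n\to\infty}W_{n}^{(\gamma)}\ \ge\ W^{(\alpha)}\quad\text{a.s.,}
\]
so in particular $\liminf_{n}W_{n}^{(\gamma)}>0$ on $A$.

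Finally, $m(\gamma)<1$ makes $(W_{n}^{(\gamma)})_{n\ge 0}$ a nonnegative supermartingale (with respect to $(\mathcal{A}_{n})$) whose mean is $m(\gamma)^{n}\to 0$; by the martingale convergence theorem it converges a.s.\ to some $W^{(\gamma)}$, and Fatou's lemma forces $\mathds{E}W^{(\gamma)}=0$, i.e.\ $W_{n}^{(\gamma)}\to 0$ a.s. This contradicts the positive lower bound on $A$, completing the proof. The only delicate point is the bifurcation in the first paragraph into the strict versus non-strict subcases of $m(\beta_{0})\le 1$; once strict convexity of $m$ is in hand, the rest is a clean comparison argument using $R_{n}\to 0$ and the supermartingale property.
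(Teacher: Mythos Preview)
Your argument is correct, but it takes a slightly longer route than the paper's. Both proofs hinge on Lemma~\ref{Transienz} ($R_{n}\to 0$ a.s.) and the supermartingale property of $(W_{n}^{(\beta)})_{n\ge 0}$ when $m(\beta)\le 1$. The difference is in the comparison step. You use the eventual inequality $W_{n}^{(\gamma)}\ge W_{n}^{(\alpha)}$, which only yields $\liminf_{n}W_{n}^{(\gamma)}>0$ on $A$; to get a contradiction you must therefore force the supermartingale limit to be~$0$, and this is why you need the detour through strict convexity to locate a $\gamma$ with $m(\gamma)<1$. The paper instead uses the sharper pointwise bound $L(v)^{\beta}=L(v)^{-(\alpha-\beta)}L(v)^{\alpha}\ge R_{n}^{-(\alpha-\beta)}L(v)^{\alpha}$, giving
\[
W_{n}^{(\beta)}\ \ge\ R_{n}^{-(\alpha-\beta)}\,W_{n}^{(\alpha)}\ \to\ \infty\quad\text{a.s.\ on }\{W^{(\alpha)}>0\},
\]
which contradicts the a.s.\ \emph{finite} limit of a nonnegative supermartingale whenever $m(\beta)\le 1$. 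This single estimate handles $m(\beta)=1$ and $m(\beta)<1$ simultaneously, so no convexity argument is needed. Your approach buys nothing extra here; it is just a bit more circuitous. Note also that your appeal to Theorem~\ref{Biggins} to obtain $\mathds{E}W^{(\alpha)}=1$ is superfluous, since $\mathds{P}(A)>0$ is already the hypothesis.
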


\begin{proof} First note that $R_{n}=\max_{|v|=n}L(v)\to 0$ a.s.\ follows from Lemma \ref{Transienz}, which is easily seen to remain valid under \eqref{A3} and \eqref{A4}.
Now, if $m(\alpha)=1$ and $\mathds{P}(E)>0$ for $E\gl\{W^{(\alpha)}>0\}$, we see
that
$$ W_{n}^{(\alpha-\eta)}\ \ge\ R_{n}^{-\eta}W_{n}^{(\alpha)}\ \to\ \infty\quad
\text{a.s.\ on }E $$
for any $\eta\in (0,\alpha]$ which is clearly impossible if $m(\alpha-\eta)\le 1$,
where $(W_{n}^{(\alpha-\eta)})_{n\ge 0}$ would be a nonnegative supermartingale.
\end{proof}

\end{appendix}

\end{document}